\numberwithin{equation}{section}
\def\CC{{\mathbb C}}
\def\PP{{\mathbb P}}
\def\Pb{{\mathbb P}}
\def\QQ{{\mathbb Q}} 
\def\RR{{\mathbb R}}
\def\ZZ{{\mathbb Z}} 
\def\Zb{{\mathbb Z}} 
\def\Ab{\mathbf {A}}
\def\Eb{\mathbf {E}}
\def\Ib{\mathbf {I}}
\def\Ub{\mathbf{U}}
\def\ssm{\smallsetminus}
\def\aff{{\rm aff}}
\def\G{\Gamma}
\def\g{\gamma}
\def\hol{\mathrm{hol}}
\def\orb{{\rm orb}}
\def\Itwo{{\rm I_2}}
\def\bs{\backslash}
\newcommand{\eps}{\varepsilon}
\newcommand{\p}{\partial}
\def\Ecal{{\mathcal E}}
\def\Mcal{{\mathcal M}}
\def\Ocal{{\mathcal O}}
\def\Scal{{\mathfrak S}}
\def\Xcal{{\mathcal X}}
\def\Bscr{{\mathscr B}}  
\def\Cscr{{\mathscr C}}  
\def\Dscr{{\mathscr D}}  
\def\Escr{{\mathscr E}}  
\def\Fscr{{\mathscr F}}  
\def\Gscr{{\mathscr G}}
\def\Tscr{{\mathscr T}}
\def\Uscr{{\mathscr U}}
\def\la{\langle}
\def\ra{\rangle}
\def\pt{{\scriptscriptstyle\bullet}}
\newcommand\aut{\operatorname{Aut}}
\newcommand\Diff{\operatorname{Diff}}
\newcommand\discr{\operatorname{Discr}}
\newcommand\Homeo{\operatorname{Homeo}}
\newcommand\im{\operatorname{Im}}
\newcommand\Mod{\operatorname{Mod}}
\newcommand\MW{\operatorname{MW}}
\newcommand\GL{\operatorname{GL}}
\newcommand\SL{\operatorname{SL}}
\newcommand\Orth{\operatorname{O}}
\newcommand\SO{\operatorname{SO}}
\newcommand\var{\operatorname{var}}
\newcommand\Trans{\operatorname{Trans}}
\newcommand\rank{\operatorname{rank}}
\newtheorem{theorem}{Theorem}[section]
\newtheorem{lemma}[theorem]{Lemma}
\newtheorem{proposition}[theorem]{Proposition}
\newtheorem{corollary}[theorem]{Corollary}
\theoremstyle{remark}
\newtheorem{definition}[theorem]{Definition}
\newtheorem{example}[theorem]{Example}
\newtheorem{remark}[theorem]{Remark}
\newtheorem{remarks}[theorem]{Remarks}
\newtheorem{question}[theorem]{Question}
\title[The smooth Mordell-Weil group]{\vspace{-.6in}The smooth Mordell-Weil group and \\ mapping class groups of elliptic surfaces}
\author{Benson Farb} 
\address{Dept.\ of Mathematics, Univ.\ of Chicago\\
5734 S. University Ave.\\
Chicago, IL 60614 (USA)}
\email{bensonfarb@gmail.com}
\author{Eduard Looijenga}
\address{Dept.\ of Mathematics, Univ.\ of Chicago\\
5734 S. University Ave.\\
Chicago, IL 60614 (USA)}
\email{e.j.n.looijenga@uu.nl}
\thanks{The first author was supported in part by National Science Foundation Grant No. DMS-181772 and the Eckhardt Faculty Fund. The  second author was supported by the University of Chicago.}
\date{May 9, 2024}
\begin{document}
\

\begin{abstract}
This is a paper in smooth $4$-manifold topology, inspired by the 
N\'{e}ron-Lang Theorem in number theory.  More precisely, we prove 
that a smooth version $\MW(\pi)$ of Mordell-Weil group  of an elliptic fibration $\pi:M\to\Pb^1$ is finitely generated.  We compute $\MW(\pi_d)$ 
explicitly for elliptic fibrations $\pi_d:M_d\to\Pb^1$, where $M_d$ 
is a simply-connected complex surfaces $M_d$ of arithmetic genus 
$d\geq 1$ and all fibers of $\pi_d$ are nodal.

We prove in this case that the fibered structure is unique up  topological isotopy.  By combining  this with a result of Donaldson, we obtain the following remarkable consequence: any diffeomorphism of $M_d$ with $d\geq 3$ is topologically isotopic to a diffeomorphism taking fibers to fibers.
\end{abstract}

\maketitle

\vspace{-.3in}
\tableofcontents

\section{Introduction} 
This is a paper in smooth $4$-manifold topology, inspired by the 
Lang-N\'{e}ron Theorem in number theory.  More precisely, we prove a smooth version of the Mordell-Weil Theorem and apply it to the 
`unipotent radical' case of a Thurston-type classification of mapping classes of simply-connected elliptic surfaces.  We also give methods to compute a number of related subgroups of mapping class groups of 
these surfaces.   Applications include Nielsen realization theorems for simply-connected elliptic surfaces. In order to state our main results, we first briefly  review the corresponding chapter in the theory of complex surfaces.

\subsection{Elliptically fibered surfaces}\label{subsect:1.1} In this paper a  (holomorphic) {\em genus one fibration} of a smooth, compact complex surface $M$ is a holomorphic map $\pi:M\to\Pb^1$ whose general fiber is of genus one. We will always assume that 
\begin{enumerate}
\item $M$ is simply connected,
\item $\pi$ is {\em relatively minimal}:  no fiber contains a  copy of $\PP^1$ with  self-intersection $-1$, 
\item the fiber class of $\pi$  in $H_2(M)$ is primitive (= indivisible).
\end{enumerate}
 This implies among other things that $\pi$ has at least one singular fiber but has no multiple fibers, and  that all singular fibers are of Kodaira type. 

Such a fibration has only finitely many critical values, and each such 
value has a well-defined multiplicity $>0$ given by the Euler characteristic  of the fiber; the resulting divisor on $\PP^1$ is the \emph{discriminant $D_\pi$} of $\pi$. A point of $\Pb^1$ has  multiplicity $1$ in  $D_\pi$ precisely when the  fiber over it is a rational nodal curve; topologically this is a $2$-sphere with two distinct points identified. 
We say that the genus one fibration is \emph{generic} if all singular fibers are of that type, in other words, if the discriminant divisor $D_\pi$ is reduced. As the name suggests, every genus one fibration can, by an arbitrary small deformation, be  deformed into a generic one. An {\em elliptic fibration} is a genus one fibration equipped with a section of $\pi$.

The most important topological invariant of a genus one fibration $\pi : M\to \Pb^1$  is  the Euler characteristic of $M$; this is always a positive integer divisible by $12$  and the  quotient $d$  is called  \emph{of arithmetic genus} of  $M$. This number determines the place of $M$ in Kodaira's classification of complex surfaces.
When  $d=1$ the surface is rational  (of Kodaira dimension $-\infty$).  
Indeed, such a surface is obtained by blowing up the fixed-point set of a general pencil of cubic curves in $\Pb^2$ and comes with many sections (every exceptional divisor is one; see Example \ref{example:rationalelliptic} for more details). When  $d=2$ it is a K3 surface  (of Kodaira dimension $0$) and for $d>2$ its Kodaira dimension is $1$.  

Moishezon proved that the 
simply-connected, genus one fibrations of a fixed arithmetic genus $d$  form a single connected family. This  makes $d$ a complete smooth invariant, so that  we can think of such genus one  fibrations as structures having the same underlying closed oriented $4$-manifold $M_d$.  One way to obtain an elliptic fibration $\pi_d:M_d\to\Pb^1$ of arithmetic genus $d$ is by pulling back $\pi_1$  along a morphism $\Pb^1\to\Pb^1$ of degree  $d$ whose discriminant  is disjoint with that of $\pi_1$. From this it is easy to see that $H_2(M_d)$ has signature $(2d-1,10d-1)$.

\subsection{The holomorphic Mordell-Weil group}
As explained in more detail in  \S\ref{subsection:trans} below, each smooth fiber of a holomorphic genus one fibration $\pi:M\to\Pb^1$ inherits an affine structure, and the notion of `translation' of a fiber makes sense.   
The {\em holomorphic Mordell-Weil group} is the group 
\[
\MW_{\hol}(\pi):=\{f\in\aut(M): \text{$f$ acts on each smooth fiber by translation}\},
\]
where $\aut(M)$ denotes the group of biholomorphic automorphisms of $M$.  This group is evidently abelian.  The passage to the  associated Jacobian fibration (which replaces each smooth fiber by its Jacobian) does not change $\MW_{\hol}(\pi)$ but turns it into the Mordell-Weil group of an elliptic fibration (rather than of a genus one fibration) and in that case it is a well-studied, classical object (see e.g.\ the book of Sch\"utt--Shioda, \cite{shioda}): it is the group of rational points of an elliptic curve over the field of rational functions on $\Pb^1$.  The Mordell-Weil Theorem for function fields (proved by Lang-N\'eron) implies that $\MW_\hol(\pi)$ is finitely generated (\footnote{Recall that we are assuming that $M$ is simply connected; otherwise finite generation does not hold.}).

The group $\MW_\hol(\pi)$ acts faithfully on $H_2(M):=H_2(M;\Zb)$ and  fixes the class $e\in H_2(M)$ of a fiber of $\pi$. The class $e$ is isotropic for the intersection pairing and primitive by assumption (3).  This makes  $e^\perp/\Zb e$ a nondegenerate lattice.  The vectors in $e^\perp$  whose Poincar\'e duals have Hodge type $(1,1)$  determine  a \emph{negative-definite} primitive \emph{even} sublattice of $e^\perp$ which we shall denote by $e^\perp_{alg}$. This sublattice is preserved by $\MW_\hol(\pi)$. It is known that $\MW_\hol(\pi)$  acts faithfully  on  $e^\perp_{alg}$, which means that $\MW_\hol(\pi)$ embeds in the 
semi-direct product  
$(e^\perp_{alg}/\Zb e)\rtimes \Orth(e^\perp_{alg}/\Zb e)$.  Since $\Orth(e^\perp_{alg})$ is finite, it follows that the  rank of $\MW_\hol(\pi)$ is at most the rank of    $e^\perp_{alg}/\Zb e$. The latter  is  known to be at most $10d-2$  (Thm.~ 6.20 in \cite{shioda}).  

In order to make the connection of $\MW_\hol(\pi)$ with its classical interpretation, assume that $\pi$ admits a holomorphic section $\PP^1\to M$.  Then  $\MW_\hol(\pi)$ permutes such sections  simply transitively. This leads to an identification of   $\MW_\hol(\pi)$  with $e^\perp_{alg}/e^\perp_{triv}$, where  $e^\perp_{triv} \subset e^\perp_{alg}$ denotes the sublattice generated by the classes of irreducible  components of fibers of $\pi$. 
If we divide out by torsion, then $\MW_\hol(\pi)$ comes (through the above characterization) with a negative definite quadratic form.  The  opposite of this form is  the so-called {\em height pairing} on $\MW_\hol(\pi)$ (which is indeed positive definite). If  all fibers are irreducible (which is the case of interest here), then $e^\perp_{triv}=\ZZ e$ and hence hence $\MW_\hol(\pi)\cong e^\perp_{alg}/\ZZ e$ is free abelian.

 In general the group $\MW_\hol(\pi)$ depends on $\pi$ and on the complex structure on $M$. 

\subsection{The smooth Mordell-Weil group}  For a genus one fibration 
$\pi:M\to\Pb^1$, the group
\[
\Trans(\pi):=\{f\in\Diff(M): \text{$f$ acts on each smooth fiber of $\pi$ by translation}\}.
\]
is abelian and infinite-dimensional. We define the {\em smooth Mordell-Weil group} $\MW(\pi)$ of $\pi$ to be its maximal discrete quotient
\[
\MW(\pi):=\pi_0(\Trans(\pi)).
\]
Here the implicit assumption is that $\pi$ is holomorphic, but the isomorphism type of $\Trans(\pi)$ and $\MW(\pi)$ only depends on the fiberwise diffeomorphism type of $\pi:M\to\Pb^1$.   As in the holomorphic case, the set of connected components of the space of smooth sections of $\pi$ is a torsor for $\MW(\pi)$, so that the elements of 
$\MW(\pi)$ are represented by fiberwise translations by the difference of two smooth sections.  See Figure \ref{figure:MWpic1}.

\begin{figure}[h]
\includegraphics[scale=0.11]{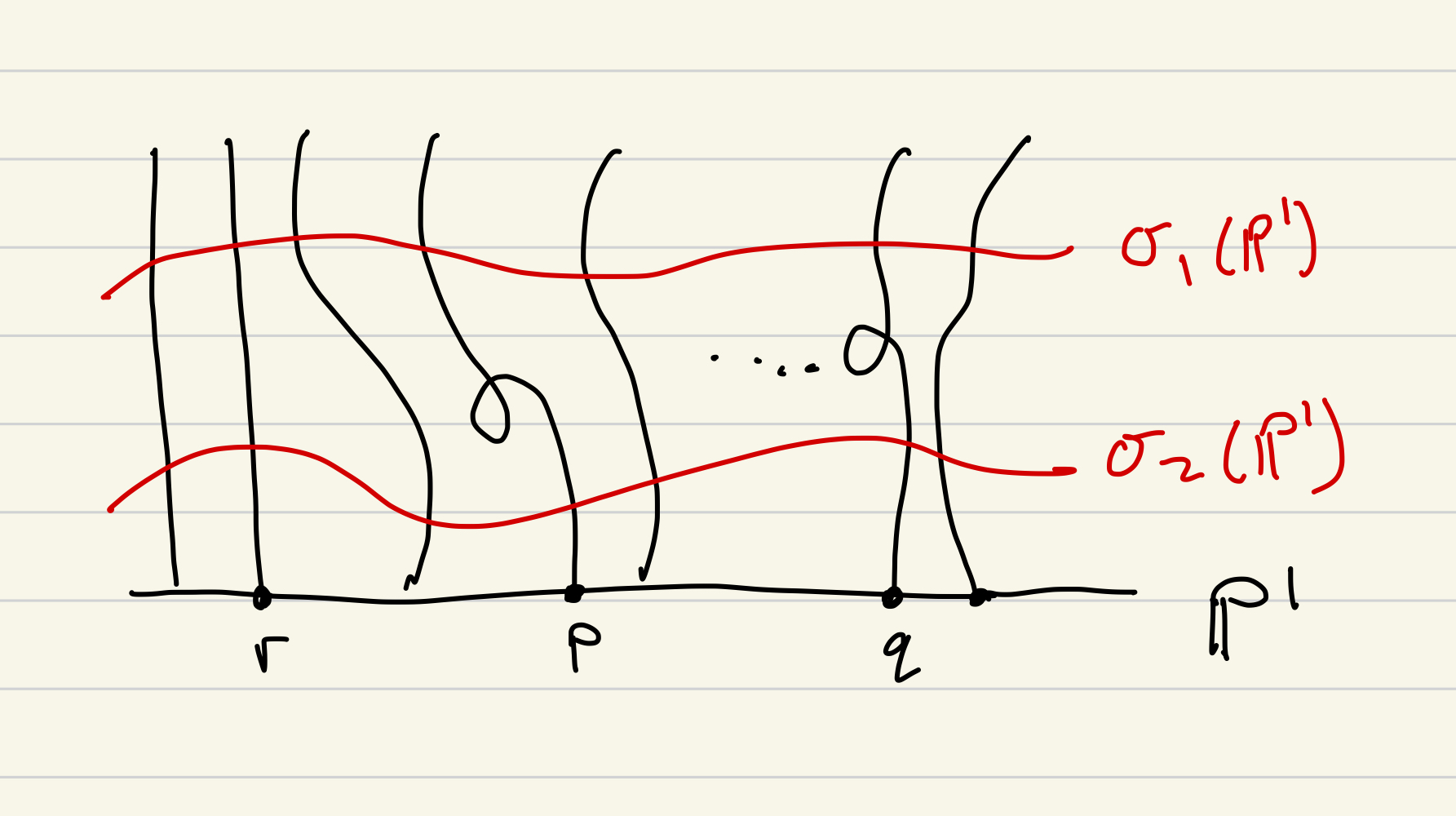}
\caption{\footnotesize A schematic of an elliptic fibration over $\Pb^1$.  The generic fiber, as over $r\in\Pb^1$ in the figure, is a smooth elliptic curve; the fibers over $p,q\in\Pb^1$ in the figure represent rational nodal curves. While such fibers are generic, a typical fiber of an elliptic fibration can be much more complicated.  Two smooth sections $\sigma_1,\sigma_2$ are indicated.  They determine a fiber-preserving diffeomorphism by translating each fiber $\pi^{-1}(b)$ by $z\mapsto z+\sigma_2(b)$, where addition is given in the unique group structure on $\pi^{-1}(b)$ with $\sigma_1(b)$ as identity.}
\label{figure:MWpic1}
\end{figure}

Our first result is a smooth analogue of the Lang-N\'eron theorem. (We emphasize that all the theorems stated here are  subject to our blanket assumptions  (1)--(3), as stated at the beginning of Subsection \ref{subsect:1.1}.)

\begin{theorem}[{\bf Smooth Mordell-Weil Theorem}]
\label{theorem:smoothMW4}
Let $\pi:M\to\Pb^1$ be any genus one fibration of a compact, complex surface $M$.  Then $\MW(\pi)$ is finitely-generated.
\end{theorem}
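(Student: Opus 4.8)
The plan is to exhibit $\MW(\pi)$ as a subgroup of a finitely generated abelian group by letting it act on the homology lattice $H_2(M;\Zb)$. Any $f\in\Trans(\pi)$ is an orientation-preserving diffeomorphism, hence acts on $H_2(M;\Zb)$ preserving the (unimodular) intersection form; since homotopic maps induce the same map on homology, the assignment $f\mapsto f_*$ descends to a homomorphism
\[
\rho:\MW(\pi)\longrightarrow \Orth(H_2(M;\Zb)).
\]
Because every such $f$ preserves $\pi$, it fixes the fiber class $e$. The two remaining tasks are to pin down the image of $\rho$ precisely and to control its kernel.

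For the image, I would show that for each $f$ the isometry $f_*$ is an Eichler--Siegel transvection attached to $e$. Concretely, for $v\in e^\perp$ set
\[
\psi_v(x)=x-(x\cdot v)\,e+(x\cdot e)\,v-\tfrac12(v\cdot v)(x\cdot e)\,e ;
\]
this is an isometry fixing $e$ that acts trivially on $e^\perp/\Zb e$, it depends only on $v\bmod\Zb e$, and $v\mapsto\psi_v$ is a homomorphism whose kernel is exactly $\Zb e$ (using unimodularity), so its image is isomorphic to $e^\perp/\Zb e$. I claim $f_*=\psi_{v(f)}$ for a suitable $v(f)$: being fiber-preserving and covering the identity on $\Pb^1$, the map $f_*$ fixes $e$ and acts trivially on the base lattice $e^\perp/\Zb e$, and any isometry of $H_2(M;\Zb)$ with these two properties is precisely a transvection $\psi_v$. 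This matches the semidirect-product description of the holomorphic case in \S1.2, with the orthogonal part trivial. It may be verified directly on a generating set of $H_2(M;\Zb)$ built from a section, the fiber-component classes, and the vanishing cycles, recovering the Picard--Lefschetz formula for how translations act. Since $e^\perp$ is a lattice of finite rank, $e^\perp/\Zb e$ is finitely generated free abelian, whence $\operatorname{Im}\rho\subseteq\{\psi_v\}\cong e^\perp/\Zb e$ is finitely generated.

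It then remains to control $\ker\rho$, and the cleanest route is to prove $\rho$ is injective. Fix a smooth section $\sigma_0$ of $\pi$; recall that $\pi_0$ of the space of smooth sections is a torsor under $\MW(\pi)$. If $f\in\Trans(\pi)$ satisfies $f_*=\mathrm{id}$, then $\sigma_1:=f\circ\sigma_0$ is a smooth section with $[\sigma_1]=f_*[\sigma_0]=[\sigma_0]$, so $\sigma_0$ and $\sigma_1$ are homologous. Injectivity thus reduces to the rigidity statement that two homologous smooth sections lie in the same component of the space of sections, i.e.\ are joined by a path of fiberwise translations. This is where genuine $4$--manifold topology enters, and it is the step I expect to be the main obstacle: over the open locus $U=\Pb^1\smallsetminus D_\pi$ a fiberwise translation is a section of the torus bundle $\pi^{-1}(U)\to U$, whose components are governed by the local system $\mathcal{L}=R^1\pi_*\Zb|_U$, and one must show that homological triviality forces such a section to extend over the finitely many singular fibers as the trivial translation. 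Here the presence of at least one nodal fiber together with its Picard--Lefschetz monodromy, the simple-connectivity of $M$, and the compactness of $\Pb^1$ are the essential inputs. Even short of full injectivity, this local analysis at the finitely many singular fibers bounds $\ker\rho$ by a finitely generated group, which already suffices, since an extension of a finitely generated group by a finitely generated group is finitely generated.
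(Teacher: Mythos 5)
Your proposal has two genuine gaps, and the first is a step that is actually false in the generality of the theorem.

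The claim that every $f\in\Trans(\pi)$ acts on $H_2(M;\ZZ)$ as an Eichler transvection $\psi_v$ fails as soon as $\pi$ has a reducible fiber, which the blanket assumptions (1)--(3) allow: only multiple fibers are excluded, so Kodaira types $\mathrm{I}_n$ ($n\ge 2$), $\mathrm{I}_n^*$, etc.\ can occur --- the theorem is not restricted to generic fibrations. Near a reducible Kodaira fiber $X_b$ the group of fiberwise translations has a nontrivial component group, which permutes simply transitively the multiplicity-one components of $X_b$; this is the group $\discr H_2(X_b)$ of the paper, and Theorem \ref{thm:finitegen} shows that $\MW(\pi)$ surjects onto $\oplus_{b\in D}\discr H_2(X_b)$, so such component-permuting translations exist globally. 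Concretely, for an $\mathrm{I}_2$-fiber with components $A_1,A_2$ one has $e=[A_1]+[A_2]$, and a translation rotating this fiber sends $[A_1]\mapsto [A_2]\equiv -[A_1]\pmod{\ZZ e}$; since $[A_1]^2=-2$, the class $[A_1]$ is neither zero nor $2$-torsion in $e^\perp/\ZZ e$, so this $f_*$ fixes $e$ but acts nontrivially on $e^\perp/\ZZ e$ and is not any $\psi_v$. This is precisely why the paper proves the Eichler-transvection description (Proposition \ref{prop:MWcohomologyII}) only under the hypothesis that all fibers are integral, and even there the proof requires the Leray spectral sequence argument; your ``covers the identity on $\Pb^1$, hence acts trivially on $e^\perp/\ZZ e$'' is an assertion, not an argument. (Finite generation of the image of $\rho$ can be rescued --- abelian subgroups of $\GL_n(\ZZ)$ are finitely generated --- but not by the route you describe.)

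Second, and more seriously, the kernel step is not a proof but a restatement of the problem. You reduce injectivity of $\rho$ to the rigidity statement that homologous smooth sections lie in the same component of the space of sections, correctly flag it as the main obstacle, and then fall back on the claim that ``local analysis at the finitely many singular fibers bounds $\ker\rho$ by a finitely generated group.'' Nothing substantiates this: the danger is not local at the singular fibers but global --- a priori the group of fiberwise translations that act trivially on $H_2(M)$ (and are even supported away from $D_\pi$) could have infinite-rank $\pi_0$, and no fiber-by-fiber analysis rules that out. Moreover, in this generality you have no smooth section $\sigma_0$ to fix: existence of sections is itself a theorem, proved in Step 2 of Propositions \ref{thm:MWcohomologyI} and \ref{prop:MWcohomologyII} by cohomological means and only under the integral-fiber hypothesis. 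The paper's proof supplies exactly the missing ingredient and bypasses $H_2(M)$ entirely: via the soft resolution of $R^1\pi_*\ZZ$ it identifies $\MW(\pi)$ with $H^1(\Pb^1,R^1\pi_*\ZZ)$ when all fibers are integral (Proposition \ref{prop:MWcohomology}), and in general fits $\MW(\pi)$ into an exact sequence between a quotient of $H^1(\Pb^1,D;R^1\pi_*\ZZ)$ and the finite group $\oplus_{b\in D}\discr H_2(X_b)$ (Theorem \ref{thm:finitegen}); finite generation then follows from constructibility of $R^1\pi_*\ZZ$. Completing your reduction to section rigidity would amount to redoing this computation, so as it stands the heart of the theorem is missing from your argument.
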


A more general and refined version of this result is 
given as Theorem \ref{thm:finitegen} below.  We will now focus 
on the generic case:  in what follows $\pi_d:M_d\to\Pb^1$ will always stand for a generic (holomorphic) genus one fibration of arithmetic genus $d$. We write $\Lambda_d$ for the lattice $H_2(M_d)$ and $e\in \Lambda_d$ for the fiber class. This class is primitive and isotropic. The following theorem then gives a complete description of the smooth Mordell-Weil group. 

\begin{theorem}[{\bf Computation of \boldmath$\MW(\pi_d)$}]
\label{theorem:computeMW}
Let $\pi_d:M_d\to\Pb^1$ be a generic genus one fibration of arithmetic genus $d\geq 1$. Then 
$\MW(\pi_d)$ acts faithfully on $\Lambda_d$.  Furthermore,  there is 
a natural  isomorphism of free abelian groups
\begin{equation}
\label{eq:computation6}
 \MW(\pi_d)\cong e^\perp/\ZZ e. 
 \end{equation}
The right hand side is an even {unimodular} lattice of signature $(2d-2, 10d-2)$ and hence isomorphic to $d\Eb_8(-1)\perp(2d-2)\Ub$, where $\Eb_8(-1)$ is the negative-definite $\Eb_8$ lattice  and $\Ub$ is the rank $2$ 
hyperbolic lattice (both are even unimodular).
\end{theorem}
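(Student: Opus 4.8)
The plan is to identify both groups in \eqref{eq:computation6} with a single cohomology group of the base. Let $D_{\pi_d}\subset\Pb^1$ be the support of the discriminant, which for a generic fibration is a set of $12d$ points, and let $j:B^\circ:=\Pb^1\ssm D_{\pi_d}\hookrightarrow\Pb^1$ be the inclusion of the smooth locus. Set $\Lcal:=j_*\big(R^1\pi_{d*}\ZZ\big)$, the constructible sheaf on $\Pb^1$ obtained by pushing forward the local system of fiberwise first homology from $B^\circ$. I will show $\MW(\pi_d)\cong H^1(\Pb^1,\Lcal)\cong e^\perp/\ZZ e$, with the first isomorphism topological and the second homological, and with both compatible with the action on $\Lambda_d$.

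First I would establish the topological identification $\MW(\pi_d)\cong H^1(\Pb^1,\Lcal)$. Over $B^\circ$ the map $\pi_d$ is a $T^2$-bundle, a fiberwise translation is a section of this bundle, and since the fiber is a $K(\ZZ^2,1)$ the components of the space of such sections form a torsor under $H^1(B^\circ,R^1\pi_{d*}\ZZ)$ by primary obstruction theory; one checks in addition that the continuous, infinite-dimensional part of $\Trans(\pi_d)$ contributes nothing to $\pi_0$. The substantive point is the behavior at a nodal fiber: the local monodromy is the Dehn twist about the vanishing cycle $\delta$, and a translation extends across the nodal fiber precisely when its local class lies in the $\delta$-invariants, which is exactly the condition recorded by replacing $R^1\pi_{d*}\ZZ$ on $B^\circ$ by its pushforward $\Lcal$ on all of $\Pb^1$. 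Carrying this out yields $\MW(\pi_d)\cong H^1(\Pb^1,\Lcal)$; in particular this group is finitely generated, and since the global monodromy of a generic fibration is irreducible (so $H^0(\Pb^1,\Lcal)=0=H^2(\Pb^1,\Lcal)$) an Euler-characteristic count on $\Pb^1$ gives it rank $12d-4$. I expect this step, and specifically the nodal-fiber analysis together with the proof that no extra components survive in $\pi_0(\Trans(\pi_d))$, to be the main obstacle; everything downstream is spectral-sequence bookkeeping and standard lattice theory.

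Next I would identify $e^\perp/\ZZ e\cong H^1(\Pb^1,\Lcal)$ by a Shioda--Tate-type analysis of the Leray spectral sequence for $\pi_d$ (compare \cite{shioda}). Its $E_2$-page contributes $H^2(\Pb^1,\ZZ)$, $H^1(\Pb^1,\Lcal)$ and $H^0(\Pb^1,R^2\pi_{d*}\ZZ)=\ZZ$ to $H^2(M_d)$; the two outer terms span the hyperbolic plane generated by the fiber class $e$ and the class of a section, and splitting this plane off leaves $e^\perp/\ZZ e\cong H^1(\Pb^1,\Lcal)$. This matches the previous identification, yielding the natural isomorphism \eqref{eq:computation6}. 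For the faithfulness of the action on $\Lambda_d$ I would compute directly that translation by a section-difference $v\in e^\perp$ acts on $H_2(M_d)$ by the Eichler transvection $T_v(x)=x+\langle x,e\rangle v-\big(\langle x,v\rangle+\tfrac12\langle v,v\rangle\langle x,e\rangle\big)e$; since $v\mapsto T_v$ is a homomorphism $e^\perp/\ZZ e\hookrightarrow\Orth(\Lambda_d)$ with kernel exactly $\ZZ e$, the induced $\MW(\pi_d)$-action is faithful and realizes the isomorphism above.

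Finally, the lattice structure of the target is routine. Because $e$ is primitive and isotropic in the unimodular lattice $\Lambda_d$, choosing $f$ with $\langle e,f\rangle=1$ splits off a hyperbolic plane and shows $e^\perp/\ZZ e$ is again unimodular; it is even because the canonical class $K_{M_d}=(d-2)e$ is characteristic and lies in $\ZZ e\subset e^\perp$, so that $\langle x,x\rangle\equiv\langle x,K_{M_d}\rangle=0\pmod 2$ for every $x\in e^\perp$. Subtracting the hyperbolic plane from the signature $(2d-1,10d-1)$ of $\Lambda_d$ leaves signature $(2d-2,10d-2)$, and the classification of even unimodular lattices (indefinite and hence determined by signature for $d\ge2$, and the unique negative-definite rank-$8$ lattice $\Eb_8(-1)$ for $d=1$) gives the isomorphism with $d\,\Eb_8(-1)\perp(2d-2)\Ub$.
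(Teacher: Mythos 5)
Your overall route is the same as the paper's: the paper likewise identifies $\MW(\pi_d)$ with $H^1(\Pb^1,R^1\pi_{d*}\ZZ)$ (Proposition \ref{prop:MWcohomology}, proved via a soft resolution of the translation sheaf rather than obstruction theory; for nodal fibers $R^1\pi_{d*}\ZZ$ is exactly your pushforward sheaf $\Lcal$), then locates this group inside the Leray filtration of $H^2(M_d)$ to match it with $e^\perp/\ZZ e$, and shows the action on $\Lambda_d$ is by Eichler transvections (Proposition \ref{prop:MWcohomologyII}). You do differ at the finish: you invoke the classification of even unimodular lattices, proving evenness via the characteristic class $K_{M_d}=(d-2)\eps$ and unimodularity by splitting off a hyperbolic pair on the primitive isotropic vector $e$, whereas the paper's displayed proof of the theorem instead produces the isometry with $d\Eb_8(-1)\perp(2d-2)\Ub$ by writing $\pi_d$ as an iterated fiber connected sum of rational elliptic fibrations (Corollary \ref{cor:fiberconnectedsum}). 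Your finish is shorter and self-contained; the paper's builds gluing technology it needs elsewhere.

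There is, however, a genuine gap: you use smooth sections of $\pi_d$ at two load-bearing points --- ``the hyperbolic plane generated by the fiber class $e$ and the class of a section'' in the Leray step, and ``translation by a section-difference $v\in e^\perp$'' in the faithfulness step --- but you never prove that a section exists. Existence is not a hypothesis (only primitivity of $e$ is assumed) and is a nontrivial intermediate theorem in the paper: one first proves $H^2(\Pb^1,R^1\pi_{d*}\ZZ)=0$ (using the vanishing of the relevant Leray differential, which follows from primitivity of $e$, together with $H^3(M_d)=0$), and then runs a \v{C}ech descent argument gluing local sections (Steps 2 and 4 of the proof of Propositions \ref{thm:MWcohomologyI} and \ref{prop:MWcohomologyII}). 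Your own claim $H^2(\Pb^1,\Lcal)=0$ is precisely the needed input, so the gap is fixable inside your framework, but as written the torsor structure on $\pi_0$ of the space of sections --- and hence your parametrization of $\MW(\pi_d)$ by section-differences --- rests on nothing. Relatedly, the assertion that a section-difference acts by the transvection $T_v$ is announced as a ``direct computation'' but not carried out; the paper obtains it structurally (fiberwise translations act trivially on the Leray graded pieces, and any isometry of the unimodular lattice $\Lambda_d$ acting trivially on the graded pieces of the filtration attached to $e$ is an Eichler transformation), and you should either reproduce such an argument or cite one (e.g.\ Cox--Zucker). A minor imprecision: $H^0=H^2=0$ requires not irreducibility of the monodromy but vanishing of its (co)invariants, which holds since the vanishing cycles generate $H_1$ of a fiber ($M_d$ being simply connected); irreducibility over $\QQ$ gives only finiteness, enough for your rank count but not for the integral statements.
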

Theorem \ref{theorem:computeMW} is proved at the end of Section \ref{section:MW}.

\begin{remark}[Holomorphic vs. smooth]
\label{remark:comparison1}
Theorem~\ref{theorem:computeMW} implies that
\[
\rank (\MW_{\rm hol}(\pi_d))\leq 10d-2 < 12d-4=\rank (\MW(\pi_d)) \ \ \text{for $d\geq 2$}
\]
highlighting the difference between the holomorphic and smooth 
categories.   In particular, the natural map $\MW_\hol(\pi_d)\to\MW(\pi_d)$ 
induced by the inclusion  $\MW_\hol(\pi_d)\to\Trans(\pi_d)$ cannot be onto when $d\ge 2$ (it  is an isomorphism for $d=1$).
\end{remark}

\begin{remark}[Symplectic vs.\ smooth]
After posting a first version of this paper, it was pointed out to us that Hacking-Keating prove a symplectic version of Theorem \ref{theorem:computeMW}.  
More precisely, their version concerns the restriction of $\pi_d$ to an affine line in $\PP^1$ whose total space  they endow with  an  exact symplectic form such that the fibers are Lagrangian; see \S 4.1 of \cite{HK}. Their symplectomorphisms can be extended to all of $M_d$.  If $\Trans(\pi_d,\omega)$ denotes the fiberwise translations preserving a  specific symplectic form (given in \cite{HK}),  then they find an isomorphism $\pi_0(\Trans(\pi_d,\omega))\to e^\perp/\Zb e$.  In view of  the factorization
\[\pi_0(\Trans(\pi_d,\omega))\to\MW(\pi_d)\to e^\perp/\Zb e,\]
their result implies the surjectivity part of the isomorphism given in \eqref{eq:computation6}.
\end{remark}

\subsection{$\MW(\pi_d)$ as a mapping class group}  Our interest in the smooth Mordell-Weil group $\MW(\pi_d)$ arises from its close relationship with the {\em (smooth) mapping class group} of $M_d$, defined by
\[\Mod(M_d):=\pi_0(\Diff(M_d)).\]  

We claim that $f\in\Diff(M_d)$ preserves the orientation of $M_d$ and hence the intersection pairing on $M_d$.  If not then 
$\la \alpha\cup\beta, [M_d]\ra =\la f^*\alpha\cup f^*\beta, -[M_d]\ra $ for all $\alpha, \beta\in H^2(M_d)$, which would imply 
that the  Witt index of the intersection pairing is zero, i.e.,  $2d-1=10d-1$ and this is obviously never the case here.
So the map $\rho_d([f]):=f_*$ defines a representation $\rho_d:\Mod(M_d)\to \Orth(\Lambda_d)$.  The action of $\MW(\pi_d)$ on $\Lambda_d$ factors as a composition
\[
 \MW(\pi_d)\to \Mod(M_d)\to\Orth(\Lambda_d)
 \]
which is injective by Theorem \ref{theorem:computeMW}, so that we 
can regard $\MW(\pi_d)$ as a subgroup $\Mod(M_d)$.  It is not known whether the kernel of 
$\Mod(M_d)\to \Orth(\Lambda_d)$ is trivial or not.  Theorem \ref{theorem:computeMW} implies that $\MW(\pi_d)$ has trivial intersection with this kernel.  In fact a stronger result holds.

\begin{corollary}\label{cor:isomorphisms1}
For each $d\geq 1$ the map $\MW(\pi_d)\to \Orth(\Lambda_d)$ is injective, so that the following are equivalent for a fiberwise translation $F\in\Trans(\pi_d)\subset\Diff(M_d)$: 
\begin{enumerate}
\item $F$  acts as the identity in $\Lambda_d$,
\item $F$ is topologically isotopic to the identity,
\item $F$ is smoothly isotopic to the identity,
\item $F$ is smoothly fiberwise isotopic to the identity,
\item $F$ is smoothly isotopic to the identity via fiberwise translations in $M$.
\end{enumerate}
\end{corollary}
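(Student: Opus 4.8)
The plan is to prove the five conditions equivalent by arranging them in a cycle: four easy implications that successively weaken the notion of ``isotopic to the identity,'' followed by one substantive implication, $(1)\Rightarrow(5)$, that feeds on Theorem~\ref{theorem:computeMW}. Concretely I would first dispatch the chain
\[
(5)\Rightarrow(4)\Rightarrow(3)\Rightarrow(2)\Rightarrow(1).
\]
Each of these is formal. A path in $\Trans(\pi_d)$ joining $F$ to the identity is in particular an isotopy through fiber-preserving diffeomorphisms, giving $(5)\Rightarrow(4)$; a fiberwise isotopy is a smooth isotopy, giving $(4)\Rightarrow(3)$; a smooth isotopy is a topological one, giving $(3)\Rightarrow(2)$; and an isotopy (even merely continuous) to the identity induces the identity on $H_2(M_d)=\Lambda_d$ because homotopic maps agree on homology, giving $(2)\Rightarrow(1)$.

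It remains to close the cycle with $(1)\Rightarrow(5)$, and here is where the whole weight of the argument sits. By definition $\MW(\pi_d)=\pi_0(\Trans(\pi_d))$, and the action of $\MW(\pi_d)$ on $\Lambda_d$ is the one induced by $F\mapsto F_*$, which is well-defined on path components precisely because isotopic diffeomorphisms act identically on homology. Thus condition $(1)$ says exactly that the class $[F]\in\MW(\pi_d)$ lies in the kernel of $\MW(\pi_d)\to\Orth(\Lambda_d)$. By the faithfulness statement of Theorem~\ref{theorem:computeMW} this kernel is trivial, so $[F]=0$ in $\pi_0(\Trans(\pi_d))$; that is, $F$ lies in the identity path-component of $\Trans(\pi_d)$. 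A path from $\mathrm{id}$ to $F$ inside $\Trans(\pi_d)$ is exactly an isotopy to the identity through fiberwise translations, which is condition $(5)$.

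The only genuine technical point, and the step I would take most care over, is matching the topological notion of ``path component'' implicit in $\pi_0(\Trans(\pi_d))$ with the word ``smoothly'' appearing in condition~$(5)$: I would note that $\Trans(\pi_d)$, carrying its natural $C^\infty$ topology, has the property that a continuous path of fiberwise translations can be smoothed without leaving the group, so that membership in the identity component is detected by a genuinely smooth isotopy of the required form. With this in hand the cycle is complete. I emphasize that no new hard input is required beyond Theorem~\ref{theorem:computeMW}: the striking content of the corollary --- that for fiberwise translations the topological, smooth, fiberwise, and translation-isotopy notions all collapse to the single homological condition~$(1)$, notions that are generically distinct for a smooth $4$-manifold --- is a direct consequence of the faithful action already established there.
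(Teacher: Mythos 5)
Your proposal is correct and follows exactly the paper's route: the chain $(5)\Rightarrow(4)\Rightarrow(3)\Rightarrow(2)\Rightarrow(1)$ is dismissed as formal, and the cycle is closed by $(1)\Rightarrow(5)$, which is precisely the faithfulness of the $\MW(\pi_d)$-action on $\Lambda_d$ from Theorem~\ref{theorem:computeMW} read through the definition $\MW(\pi_d)=\pi_0(\Trans(\pi_d))$. The paper's only additional remark --- that the equivalence $(1)\Leftrightarrow(2)$ also follows from \cite{GGHKP} --- is logically redundant given your cycle, so nothing is missing.
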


The equivalence of (1) and (2) in Corollary  \ref{cor:isomorphisms1} is 
given in \cite{GGHKP}; and  the implications (5)$\Rightarrow$(4)$\Rightarrow$(3)$\Rightarrow$(2)$\Rightarrow(1)$ are trivial.  Our contribution is Theorem \ref{theorem:computeMW}, which gives (1)$\Rightarrow$(5).
\\

The {\em Nielsen Realization Problem} asks whether a given homomorphism  $i:G\to\Mod(M_d)$ 
lifts to a homomorphism  $\tilde{i}: G\to \Diff(M_d)$; in this case we say that $G$ {\em is realized by} this lift.   

\begin{theorem}[{\bf Nielsen Realization, I}]
\label{theorem:Nielsen1}
For a generic genus one fibration $\pi:M_d\to \Pb^1$ of arithmetic genus $d\ge 1$,  the Nielsen Realization Problem is solvable for the injection $\MW(\pi_d)\to \Mod(M_d)$;  in fact, we can lift it to a homomorphism  $\MW(\pi_d)\to \Trans(\pi_d)$.\end{theorem}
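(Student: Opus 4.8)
The plan is to observe that realizing the injection $\MW(\pi_d)\to\Mod(M_d)$ through fiberwise translations amounts to splitting a short exact sequence of abelian groups whose quotient is free. By definition $\MW(\pi_d)=\pi_0(\Trans(\pi_d))$, so the projection to connected components is a surjective homomorphism $q$ fitting into
\[
1\longrightarrow \Trans(\pi_d)^{\circ}\longrightarrow \Trans(\pi_d)\xrightarrow{\,q\,}\MW(\pi_d)\longrightarrow 1,
\]
where $\Trans(\pi_d)^{\circ}$ denotes the identity component. Write $p\colon\Diff(M_d)\to\Mod(M_d)$ for the projection to components. Since a path inside $\Trans(\pi_d)$ is in particular a path inside $\Diff(M_d)$, the restriction $p|_{\Trans(\pi_d)}$ factors through $q$ as $p|_{\Trans(\pi_d)}=i\circ q$, where $i\colon\MW(\pi_d)\to\Mod(M_d)$ is exactly the injection under consideration. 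Consequently, any group-theoretic section $s$ of $q$ automatically realizes $i$: from $q\circ s=\mathrm{id}$ we get $p\circ s=i\circ q\circ s=i$, and by construction $s$ takes values in $\Trans(\pi_d)$.

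First I would invoke Theorem \ref{theorem:computeMW} to see that $\MW(\pi_d)\cong e^\perp/\ZZ e$ is free abelian of finite rank $12d-4$. Next I would use that $\Trans(\pi_d)$ is abelian, which is immediate from the definition since fiberwise translations commute. These two facts are all that is needed: choosing a basis $g_1,\dots,g_n$ of $\MW(\pi_d)$ together with arbitrary preimages $F_j\in q^{-1}(g_j)$, the universal property of free abelian groups extends $g_j\mapsto F_j$ to a homomorphism $s\colon\MW(\pi_d)\to\Trans(\pi_d)$ (here commutativity of $\Trans(\pi_d)$ is what guarantees that $\sum a_j g_j\mapsto \sum a_j F_j$ is well defined). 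Since $q\circ s=\mathrm{id}$ holds on the basis it holds identically, so $s$ is the desired lift and the theorem follows.

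Phrased abstractly, the argument is nothing more than the fact that a surjection of abelian groups onto a free---equivalently, projective---abelian group splits. I therefore do not expect a genuine obstacle inside this proof: all of the work has been front-loaded into Theorem \ref{theorem:computeMW}. The single feature of that theorem that makes the present argument go through is the torsion-freeness of $\MW(\pi_d)$; were there torsion, a homomorphic section could be obstructed by a nonzero extension class in $\Ext^1(\MW(\pi_d),\Trans(\pi_d)^{\circ})$, and one would have to analyze that group rather than appeal to projectivity. Freeness renders this $\Ext$ group irrelevant, and the realization is unconditional.
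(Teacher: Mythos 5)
Your proposal is correct and is essentially the paper's own proof: the paper likewise invokes the computation $\MW(\pi_d)\cong \Lambda_d(e)=e^\perp/\ZZ e$ (via Proposition \ref{prop:MWcohomologyII}), chooses a basis of this free abelian lattice, lifts each basis element to a fiberwise translation, and uses commutativity of $\Trans(\pi_d)$ to extend this to a homomorphic section of $\Trans(\pi_d)\to\MW(\pi_d)$. Your explicit verification that such a section automatically realizes the injection into $\Mod(M_d)$, and your framing via projectivity of free abelian groups, only make explicit what the paper leaves implicit.
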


We prove Theorem \ref{theorem:Nielsen1} at the end of Section \ref{section:MW}.  

\begin{remarks} We make a few comments on Theorem \ref{theorem:Nielsen1}.

(1)  The Nielsen Realization Problem has a negative answer for arbitrary subgroups of $\Mod(M_d)$; indeed there are finite subgroups $G\subset \Mod(M_2)$ that cannot be realized by a group of diffeomorphisms of $M_2$; see \cite{FL1}.

(2) Although each element of $\Trans(\pi_d)$ leaves invariant some complex structure, when $d\geq 2$ the whole group $\MW(\pi_d)$ 
cannot leave invariant any complex structure; see Remark \ref{remark:comparison1} above. 

(3) We can choose the lift in Theorem \ref{theorem:Nielsen1}  such that 
the action of $\MW(\pi_d)$ on $M_d$ via this  lift is free on the smooth part of each fiber with the topological closure of each orbit equalling that fiber. So then this lift determines $\pi$.

(4) When $\pi:M\to\Pb^1$ is not generic, that is,  when non-nodal singular fibers are allowed, then $\MW_\hol(\pi)$ (and hence $\MW(\pi)$) can contain torsion; see, e.g. \cite{shioda}.  It would be interesting to compute $\MW(\pi)$ in these cases.   
\end{remarks}

\subsection{Thurston-type classification and Eichler transformations}
\label{subsect:Thurstontype} 

The group $\MW(\pi_d)$ is a major case in a Thurston-type classification of elements of $\Mod(M_d)$, by which we mean finding for each class in $\Mod(M_d)$ an explicit representative that preserves a geometric structure and/or is optimal in some way.  

The real vector space $\RR\otimes \Lambda_d=H_2(M_d; \RR)$ admits a {\em spinor orientation}, which is an orientation on the tautological bundle of the Grassmannian of its (maximal) positive-definite $(2d-1)$-planes in $\RR\otimes \Lambda_d$.  Denote by $\Gamma_d, d\geq 2$ the index $2$ subgroup of $\Orth(\Lambda_d)$ consisting of those automorphisms preserving this spinor orientation and let $\Gamma_1:=\Orth(\Lambda_1)$.  Friedman-Morgan \cite{FM2} proved that the image of $\rho_d:\Mod(M_d)\to \Orth(\Lambda_d)$ lies in $\Gamma_d$.  

For $d=2$ (the case of a K3 surface), the authors  considered in \cite{FL1} the case when $f_*$ is of finite order.  Certain examples with $f_*$ of  infinite order and semisimple were explored in depth by McMullen \cite{Mc} from a 
dynamical perspective.  In this paper we consider the case when $f_*\in\Orth(\Lambda_m)$ is unipotent.   We show in Proposition \ref{prop:isotropicfixedvector} below that such an element must fix some primitive isotropic vector $e\in \Lambda_d$.  Remarkably, Friedman--Morgan  proved \cite{FM2} that when $d>2$  every element of $\Gamma_d$ has this 
property; see below.  We will show in Corollary \ref{cor:diffrep} that 
any unipotent element of $\Gamma_d$ can be represented by some $f\in\Diff(M_d)$ leaving invariant some elliptic fibration.  

Recall that $e\in \Lambda_d$ stands for the fiber class. Denote by $\Gamma_{d,e}$ the stabilizer in $\Orth(\Lambda_d)$ of $e$. Since $e$ primitive isotropic it is contained in $e^\perp$ and 
\[
\Lambda_d(e):= e^\perp/\ZZ e
\]
is a nondegenerate lattice.
The action of $\Gamma_{d,e}$  on $\Lambda_d(e)$ induces a representation $\Gamma_{d,e}\to\Orth(\Lambda_d(e))$ with image the index $2$ subgroup $\Gamma_d(e)$ fixing the spinor orientation.  This gives a (non-canonically split) short exact sequence
\[
0\to \Lambda_d(e)\to \Gamma_{d,e} \to \Gamma_d(e)\to 1.
\]
The elements of $\Lambda_d(e)$ are represented as \emph{Eichler transformations}:  every  $\tilde c\in e^\perp$ 
 defines an \emph{Eichler transformation} $E(e,\tilde c)\in \Gamma_{d,e}$ via
\[
E(e,\tilde c)(x):=x + (x\cdot e)\tilde c- (x\cdot \tilde c)e-\tfrac{1}{2}(\tilde c\cdot \tilde c)(x\cdot e)e.
\]
This transformation fixes $e$ and only depends on the image $c$ of $\tilde c$ in $\Lambda_d(e)$,  or better yet, on the $2$-vector $e\wedge \tilde c\in \wedge^2\Lambda_d$, so that we can write $E(e\wedge c)$ instead. In fact, 
any element of $\Gamma_d$ that fixes $e$ and acts trivially on $\Lambda_d(e)$ is of the form $E(e\wedge c)$ for a unique $c\in \Lambda_d(e)$.  

The group $\Lambda_d(e)<\Gamma_d$ is generated by Eichler transformations $E(e\wedge c)$ with $c^2=-2$.  The following theorem gives a ``best representative'' for these elements that is different than the one given in \ref{theorem:Nielsen1}.  Hacking-Keating (see \S 6.4 of \cite{HK}) have given such a construction in a symplectic setting, giving part (2) of the following result.  Recall that to an embedded $2$-sphere $C$ with self-intersection $-2$ there is associated {\em order $2$} Dehn twist $T(C)\in \Mod (M_d)$.

\begin{theorem}[{\bf Eichler representatives}]
\label{thm:smallsupport}
Let $\pi_d: M_d\to \Pb^1$  be a generic genus one fibration of arithmetic genus $d\geq 1$ and with fiber class $e\in \Lambda_d$. Given any 
$c\in e^\perp/\ZZ e$ with $c\cdot c=-2$, there exists:
\smallskip

\begin{enumerate}
\item an open disk $U_c\subset \Pb^1$ with two embedded $2$-spheres $C,C'\subset \pi^{-1}U_c$ representing resp.\ $c$ and $e-c$ such that $\pi_d$ has precisely two singular fibers over $p,q\in U_c$, and these define the same vanishing cycle in a smooth fiber over $U_c$. See Figure \ref{figure:smallsup1}.
\medskip

\item a diffeomorphism  $f_c:M_d\to M_d$ 
such that :
\smallskip
\begin{enumerate}
\item $f_c$ acts by fiberwise translations, has support in $\pi^{-1}U_c$  and induces in $\Lambda_d$ the Eichler transformation $E(e\wedge c)$  associated to $c$,
\item the Dehn twists associated to $C$ resp.\ $C'$ can be represented  by fiber-preserving diffeomorphisms 
$\tilde \tau(C)$ resp.\ $\tilde \tau(C')$ that lift a diffeomorphism of $\Pb^1$ supported on $U_c$ that interchanges $p$ and $q$
and represents a `simple braid',  such that  $\tilde \tau(C')\tilde \tau(C)^{-1}$ is isotopic to $f_c$ by a fiber-preserving isotopy.   See Figure \ref{figure:braid3}.
\end{enumerate}
\end{enumerate}

\end{theorem}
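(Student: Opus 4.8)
The plan is to reduce the entire statement to the construction and analysis of a single local model over a disk, and then to propagate it to an arbitrary root $c$ by a transitivity argument. First I would build the local model. Over a disk $\Delta\subset\Pb^1$ I construct a genus one fibration with exactly two fibers of Kodaira type $I_1$, over points $p,q$, whose vanishing cycles agree with a fixed simple closed curve $\delta$ on a smooth fiber $E_b$, $b\in\Delta$; concretely this is obtained by taking the versal deformation of an $I_2$ fiber (two copies of $\Pb^1$ meeting in two points, total monodromy $T_\delta^2$) and separating its two nodes. The two components of the $I_2$ fiber persist, after separating the nodes, as two embedded $2$-spheres: $C$, the matching cycle of $\delta$ obtained by capping the cylinder swept out by $\delta$ along a path from $p$ to $q$ with the two Lefschetz thimbles, and $C'$ arising symmetrically as the second component. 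A Picard--Lefschetz computation gives $C^2=(C')^2=-2$, $C\cdot C'=2$ and $[C]+[C']=e$, so that $[C]$ and $[C']$ project to a pair $c,\,e-c$ of roots in $\Lambda_d(e)=e^\perp/\ZZ e$; this yields Part (1) for this particular $\delta$ (Figure \ref{figure:smallsup1}).

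Next I would upgrade this to an arbitrary root. By Theorem \ref{theorem:computeMW} the lattice $\Lambda_d(e)$ is even unimodular of signature $(2d-2,10d-2)$, so by Eichler's theorem $\Gamma_d(e)$ acts transitively on the set of vectors of square $-2$. It therefore suffices to move the configuration above by a fiber-preserving diffeomorphism inducing a prescribed element of $\Gamma_d(e)$. Such diffeomorphisms are produced by braiding the branch points of $\pi_d$: using Moishezon's connectedness of the family of generic genus one fibrations of arithmetic genus $d$, a loop in the space of branch configurations lifts to a self-diffeomorphism of $M_d$ preserving $\pi_d$ up to isotopy, and these transport matching-cycle spheres to matching-cycle spheres. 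One then checks that the resulting action on roots is transitive, which places any prescribed $c$ in the standard position and gives $U_c$, $C$ and $C'$ as required.

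For Part (2a), the fiberwise translation $f_c$ is the one furnished by Theorem \ref{theorem:Nielsen1} applied inside the local model: over $\Delta$ the two sections differing by the matching cycle $C$ define a translation supported in $\pi^{-1}U_c$, and by the construction of the identification in Theorem \ref{theorem:computeMW} it induces the Eichler transformation $E(e\wedge c)$. For Part (2b), I identify $\tilde\tau(C)$ and $\tilde\tau(C')$ as two lifts of the base half-twist interchanging $p$ and $q$ (Figure \ref{figure:braid3}): by the standard correspondence between a simple braid and the Seidel--Dehn twist along the matching cycle of the common vanishing cycle, the order $2$ Dehn twist $T(C)$ (resp.\ $T(C')$) is realized by a fiber-preserving map $\tilde\tau(C)$ (resp.\ $\tilde\tau(C')$) covering this half-twist and acting on $\Lambda_d$ by the reflection $r_C$ (resp.\ $r_{C'}$), where $r_C(x)=x+(x\cdot c)c$. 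Since both cover the same half-twist, $\tilde\tau(C')\tilde\tau(C)^{-1}$ covers the identity, hence is fiber-preserving; a direct computation with $r_C,r_{C'}$, using $c^2=(e-c)^2=-2$ and $e\cdot c=0$, shows that this composite induces exactly the Eichler transformation $E(e\wedge c)$ on $\Lambda_d$ (matching the sign of the convention fixing the twist directions). As both $\tilde\tau(C')\tilde\tau(C)^{-1}$ and $f_c$ are fiberwise translations inducing the same element of $\Orth(\Lambda_d)$, the injectivity in Corollary \ref{cor:isomorphisms1} forces them to agree in $\MW(\pi_d)$, i.e.\ to be isotopic; the isotopy can be taken fiber-preserving because it is constructed inside the local model.

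The main obstacle is the transitivity step of the second paragraph: producing, for the fixed fibration $\pi_d$, a fiber-preserving diffeomorphism carrying the standard matching-cycle configuration to one realizing an arbitrary prescribed root $c$. Abstract transitivity of $\Gamma_d(e)$ on roots is automatic, but realizing the transporting element by a diffeomorphism that preserves $\pi_d$ (so that matching spheres go to matching spheres and supports stay in a disk) requires controlling the braid monodromy of the discriminant and showing its action on roots is transitive. The remaining ingredients --- the Picard--Lefschetz self-intersection computation, the reflection formula for $T(C)$, and the half-twist/Dehn-twist correspondence --- are standard, and the final isotopy is delivered by the injectivity already established in Corollary \ref{cor:isomorphisms1}.
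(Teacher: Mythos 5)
Your local model is the right one and matches the paper's Section~\ref{section:2node} closely: the two-nodal-fiber fibration over a disk, the spheres $C,C'$ with $[C]+[C']=e$, and the identification of $\tilde\tau(C')\tilde\tau(C)^{-1}$ with a fiberwise translation inducing $E(e\wedge c)$ are, in the paper, exactly Proposition~\ref{prop:spheres and sections} and Corollary~\ref{cor:2Dehn}, which is how the paper proves parts (2a) and (2b). The genuine gap is the step you yourself flag as ``the main obstacle'': showing that an \emph{arbitrary} root $c\in\Lambda_d(e)$ is realized by such a configuration, i.e.\ by an equinodal arc. You reduce this to transitivity of a braid-monodromy action on roots and then write ``one then checks'' that it holds --- but that check \emph{is} the theorem, and nothing in your proposal supplies it. Abstract transitivity of $\Gamma_d(e)$ on $(-2)$-vectors (Eichler's criterion) is of no use, because the transporting isometry must be induced by a diffeomorphism preserving $\pi_d$; and you cannot borrow the surjectivity of $\Mod(\pi_d)\to\Gamma_{d,e}$ (Corollary~\ref{cor:repisonto}), since in the paper that surjectivity is itself deduced from the equinodal-realization statement you are trying to prove, so the argument would be circular. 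Moishezon connectedness gives you \emph{some} monodromy group acting on vanishing-cycle data, but identifying its orbit on roots is precisely the hard arithmetic-geometric input.

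The paper fills this gap with Theorem~\ref{thm:equinodal}, whose proof is not soft: it is an induction on $d$ using the fiber connected sum decomposition of Theorem~\ref{thm:fiberconnectedsum}, with base cases $d=1$ (where everything reduces to the Weyl group of $\Eb_8(-1)$) and $d=2$ (where the authors invoke the Torelli theorem for K3 surfaces via \cite{FL2}), and with the inductive step resting on Ebeling's arithmetic characterization of complete vanishing lattices \cite{E}: the equinodal roots coming from the two connected-sum pieces contain a configuration of type $\Ab_2(-1)\perp 2\Ub$, which forces the generated reflection group to be all of $\Orth(\Lambda_d(e))^+$ and the equinodal roots to be all roots. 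Without an input of this strength your construction proves the theorem only for the particular roots visible in one standard local configuration. Two smaller inaccuracies: in (2a), the translation taking one section of the local model to an adjacent one is \emph{not} supported in $\pi^{-1}U_c$ --- it is nontrivial over $\p U_c$ (Proposition~\ref{prop:sectionrestriction}); only its square lies in the relative Mordell--Weil group (Proposition~\ref{prop:equinodalMW}), and it is this square, $\Phi^2=\tau(C_1)\tau(C)^{-1}$, that is $f_c$. And in (2b), Corollary~\ref{cor:isomorphisms1} applies only to fiberwise \emph{translations}, so before invoking it you must know that $\tilde\tau(C')\tilde\tau(C)^{-1}$ is one, not merely fiber-preserving; that is what the presentation of $\Mod(\pi,\p\pi)$ in Corollary~\ref{cor:MWpresentation} provides, and your appeal quietly presupposes it.
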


\begin{figure}[h]
\includegraphics[scale=0.11]{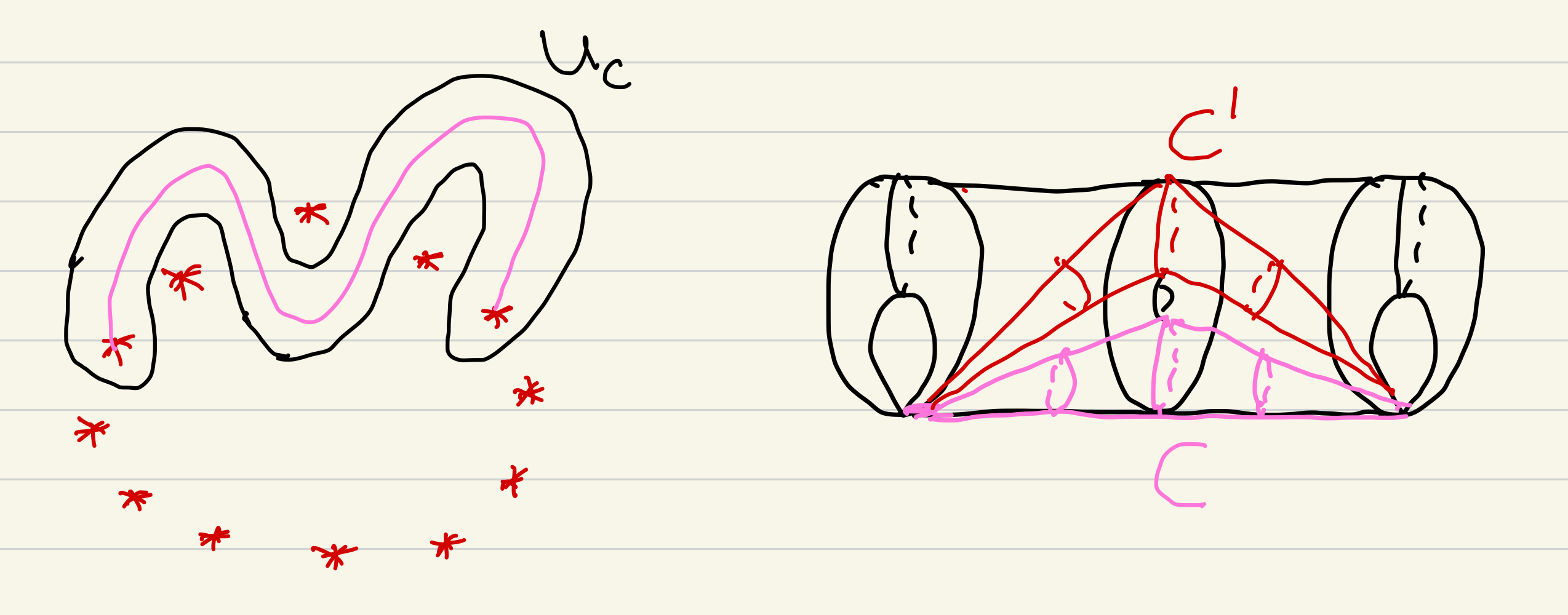}
\caption{\footnotesize On the left: each $c\in e^\perp/\ZZ e$ with $c^2=-2$ determines a pink path $\gamma_c$ between the projections to $\Pb^1$ of two nodal fibers; the disk $U_c$ is a tubular neighborhood of $\gamma_c$.  On the right: a picture of $\pi^{-1}(\gamma_c)$, together with $2$-spheres $C,C'$; these exist since the two singular fibers in $\pi^{-1}(\gamma_c)$ have a common vanishing cycle. Each of $C,C'$ is made up of two ``thimbles'', in the terminology of Lefschetz.}
\label{figure:smallsup1}
\end{figure}

\begin{figure}[h]
\includegraphics[scale=0.11]{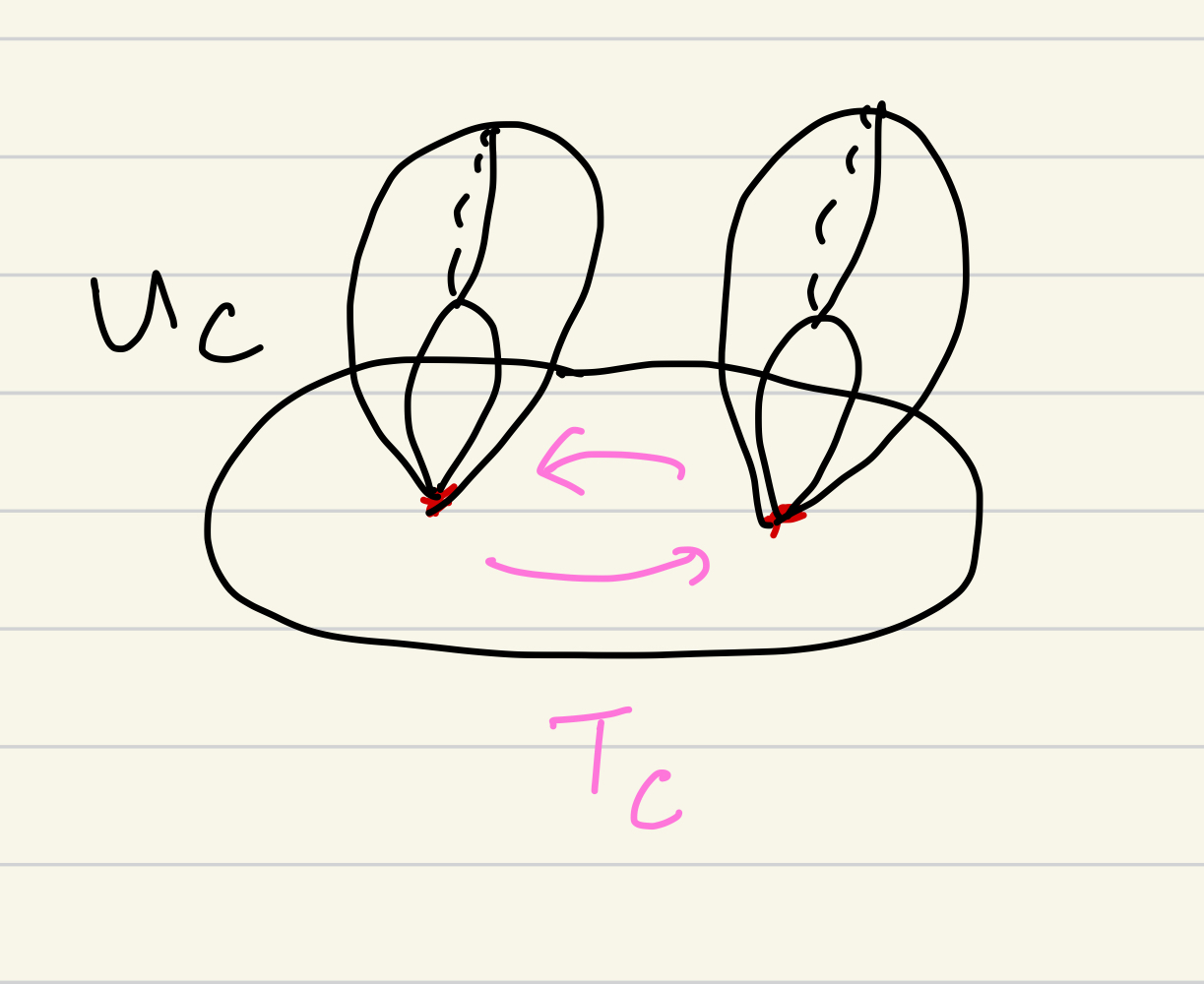}
\caption{\footnotesize In the situation described in Figure \ref{figure:smallsup1}, the Dehn twist $\tilde{\tau}(C)$ about $C$ (resp. $C'$) is a lift of an element of the spherical braid group; it ``braids'' two singular fibers of $\pi$ with the same vanishing cycle.  The product 
$\tilde{\tau}(C')\tilde{\tau}(C)^{-1}$ is isotopic to the fiber-preserving diffeomorphism $f_c$ representing the Eichler transformation $E(e\wedge c)\in \Orth(H_M)$.}
\label{figure:braid3}
\end{figure}

We prove Theorem \ref{thm:smallsupport} at the end of \S\ref{section:smallsup}.  It gives another way to realize the group 
$\Lambda_d(e)$ by a group of fiber-preserving diffeomorphisms. \

\begin{corollary}[{\bf Nielsen Realization  by Eichler transformations}]
\label{corollary:small}
For each $d\geq 1$, the lattice  $\Lambda_d(e)$ admits a basis $\Cscr$  consisting of $(-2)$-vectors.  For any such basis, the associated set 
$\{f_c\}_{c\in \Cscr}$  (in the notation of Theorem \ref{thm:smallsupport}) of fiberwise-translations determines an injective homomorphism 
$\Lambda_d(e)\to \Trans(\pi_d)$ whose composite with the projection $\Trans(\pi_d)\to \MW(\pi_d)$ is an isomorphism of lattices.
This isomorphism is the inverse of the ``Eichler representation'' of $\MW(\pi_d)$ on $\Lambda_d$.
\end{corollary}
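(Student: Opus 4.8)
\emph{Plan.} Granting Theorems~\ref{thm:smallsupport} and~\ref{theorem:computeMW}, the statement is almost formal: the only genuine input is the existence of the basis $\Cscr$, and the rest is bookkeeping with the Eichler representation. For the basis, I would use that being a basis of $(-2)$-vectors is invariant under isometry, so it suffices to produce one in the standard model $d\Eb_8(-1)\perp(2d-2)\Ub$ from Theorem~\ref{theorem:computeMW}. Each $\Eb_8(-1)$ contributes its basis of simple roots, all of norm $-2$. A single hyperbolic plane $\Ub$ admits no $(-2)$-basis, since a Gram matrix $\bigl(\begin{smallmatrix}-2 & m\\ m & -2\end{smallmatrix}\bigr)$ of determinant $-1$ would force $m^2=5$; so I would pair the $2d-2$ hyperbolic summands (using that $2d-2$ is even) into $d-1$ blocks $\Ub\perp\Ub$. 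In each block, with standard bases $\{u_1,v_1\}$ and $\{u_2,v_2\}$, the four vectors $u_1-v_1,\ u_2-v_2,\ v_1+v_2-u_1,\ v_1+v_2-u_2$ all have norm $-2$ and (a short determinant check) span a unimodular sublattice, hence form a basis of the block. Their union is a basis $\Cscr$ of $\Lambda_d(e)$ of the correct size $12d-4$, consisting entirely of $(-2)$-vectors (for $d=1$ only the $\Eb_8$ part occurs).

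\emph{The homomorphism and its inverse.} Since $\Lambda_d(e)$ is free abelian on $\Cscr$ and $\Trans(\pi_d)$ is abelian, the assignment $c\mapsto f_c$ extends uniquely to a homomorphism $\phi\colon\Lambda_d(e)\to\Trans(\pi_d)$; I set $\psi:=p\circ\phi$, where $p\colon\Trans(\pi_d)\to\MW(\pi_d)$ is the projection. Let $\epsilon\colon\MW(\pi_d)\xrightarrow{\sim}\Lambda_d(e)$ denote the Eichler representation, i.e.\ the isomorphism of Theorem~\ref{theorem:computeMW} sending a class that acts on $\Lambda_d$ as $E(e\wedge c)$ to $c$. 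By part~(2a) of Theorem~\ref{thm:smallsupport} the class $p(f_c)$ acts as $E(e\wedge c)$, and since $\MW(\pi_d)$ acts faithfully on $\Lambda_d$ (Theorem~\ref{theorem:computeMW}) this class is uniquely determined; hence $\psi$ is independent of the choices entering the construction of the $f_c$, and $\epsilon(\psi(c))=c$ for each $c\in\Cscr$. Thus $\epsilon\circ\psi$ is the identity on the basis $\Cscr$, so $\epsilon\circ\psi=\mathrm{id}$. Therefore $\psi=\epsilon^{-1}$ is an isomorphism, necessarily an isometry since $\epsilon$ is one by Theorem~\ref{theorem:computeMW}, and $\psi=p\circ\phi$ is precisely the inverse of the Eichler representation. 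Finally, injectivity of $\psi$ forces $\phi$ to be injective as well.

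\emph{Main obstacle.} The only step requiring real work is the construction of $\Cscr$; everything beyond it is a formal consequence of the two cited theorems. The point to watch is the failure of a single $\Ub$ to admit a $(-2)$-basis, which is exactly what forces the pairing of the hyperbolic planes, and I would want to verify carefully the determinant of the four explicit vectors in each $\Ub\perp\Ub$ block.
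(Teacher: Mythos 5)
Your proof is correct, and its overall shape---produce a basis $\Cscr$ of $(-2)$-vectors, extend $c\mapsto f_c$ to a homomorphism on the free abelian group $\Lambda_d(e)$, and verify on basis vectors that its composite with $\Trans(\pi_d)\to\MW(\pi_d)$ inverts the Eichler representation---is exactly that of the paper. Where you genuinely diverge is the construction of $\Cscr$. The paper decomposes $\Lambda_d(e)$ as $\Eb_8(-1)$ plus $d-1$ blocks $\Eb_8(-1)\perp 2\Ub$ and, inside each block, corrects the isotropic generators by a root: with $\alpha$ a root of the $\Eb_8(-1)$ summand and $e,f,e',f'$ the standard isotropic generators of the two hyperbolic planes, it takes $\{\alpha_1,\dots,\alpha_8,\ \alpha+e,\ \alpha+f,\ \alpha+e',\ \alpha+f'\}$. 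You instead keep the $\Eb_8(-1)$ summands untouched, pair the $2d-2$ hyperbolic planes into blocks $\Ub\perp\Ub$, and exhibit a self-contained $(-2)$-basis of each block (your four vectors do have change-of-basis determinant $\pm 1$, so this checks out). Both constructions are elementary and valid; yours has the merit of isolating exactly where the difficulty sits---your remark that a single $\Ub$ admits no $(-2)$-basis explains why some mixing is forced, which the paper handles instead by borrowing a root from the adjacent $\Eb_8(-1)$ rather than by pairing planes. A further small difference: the paper runs the realization step through Theorem \ref{thm:equinodal} (every $(-2)$-vector is equinodal, hence comes with an arc and a fiberwise translation $f_c$ supported near it), while you quote Theorem \ref{thm:smallsupport}(2a) directly, as the corollary's statement anticipates; these amount to the same input, since Theorem \ref{thm:smallsupport} is itself deduced from Theorem \ref{thm:equinodal}. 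Your spelled-out formal argument (that the composite fixes each basis vector, hence equals the identity, hence $\psi=\epsilon^{-1}$ and $\phi$ is injective) is what the paper compresses into its final sentence.
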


The normal forms given in Theorem \ref{theorem:Nielsen1} and Theorem \ref{thm:smallsupport} are of two contrasting flavors: in the first, a free abelian group of fiberwise-translations of $M_d$ representing $\Lambda_d(e)$ acts freely on $M_d$; in the second, elements of $\Lambda_d(e)$ are represented by diffeomorphisms with small support (in particular that are the identity on large open sets).

\subsection{Fiber-preserving diffeomorphisms}
As an application of the results above, we are able to give a geometric realization for the full stabilizer $\Gamma_{d,e}$. To explain this, let $e\in \Lambda_d$ denote the class of a fiber of the elliptic fibration $\pi_d:M_d\to\Pb^1$, and let $\eps\in H^2(M_d)$ denote its Poincar\'e 
dual.  The first Chern class of the canonical bundle of $M_d$ (endowed with a complex structure that makes $\pi_d$ holomorphic) is $(d-2)\eps$. When $d\neq 2$ this class is nonzero and  more is true: the curves $E$ on $M_d$ with the property that  $(d-2)E$ is a canonical divisor are precisely the fibers of $\pi$. Friedman--Morgan \cite{FM2} have shown  that this class is for $d> 2$ a differentiable invariant up to sign: every self-diffeomorphism of $M_d$ preserves that class up to sign.
Subsequently, it was proved that this is the only restriction, so that $\rho_d: \Mod(M_d)\to \Gamma_d$ is surjective for  for $d\le 2$ (an older result of Borcea, \cite{Bo}); and for $d\geq 3$ is equal to the $\Gamma_d$-stabilizer $\Gamma_{d,e}$ of $e$ in $\Lambda_d$ (a more recent result of L\"{o}nne, \cite{loenne2021}).
We prove in Corollary \ref{cor:repisonto} below L\"{o}nne's result can be lifted to the mapping class group of $\pi_d$ in the following sense:

\begin{theorem}[{\bf Geometric realization}]
\label{thm:repisonto}
Each element of $\Gamma_{d,e}$ is realized by  a diffeomorphism of $M_d$ that takes elliptic fibers (of $\pi_d$) to elliptic  fibers.
\end{theorem}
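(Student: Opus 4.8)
The plan is to exhibit a generating set of $\Gamma_{d,e}$, each of whose elements is induced by a diffeomorphism taking fibers of $\pi_d$ to fibers of $\pi_d$, and then to conclude by a soft group-theoretic principle. Write $\Fcal\subset\Diff(M_d)$ for the set of fiber-preserving diffeomorphisms; since the composite of two such maps covers the composite of the underlying base diffeomorphisms, $\Fcal$ is a subgroup, and $\rho_d(\Fcal)$ is a subgroup of $\Orth(\Lambda_d)$. The theorem is exactly the assertion that $\Gamma_{d,e}\subseteq\rho_d(\Fcal)$, so it suffices to produce elements of $\Fcal$ whose images $\rho_d$ generate $\Gamma_{d,e}$. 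I will organize this through the (non-canonically split) short exact sequence
\[
0\to \Lambda_d(e)\to \Gamma_{d,e} \to \Gamma_d(e)\to 1
\]
from the discussion of Eichler transformations: a subgroup of $\Gamma_{d,e}$ that contains the kernel $\Lambda_d(e)$ and surjects onto the quotient $\Gamma_d(e)$ must be all of $\Gamma_{d,e}$.

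For the kernel, recall that $\Lambda_d(e)<\Gamma_{d,e}$ is generated by the Eichler transformations $E(e\wedge c)$ with $c\cdot c=-2$, and that Corollary~\ref{corollary:small} (equivalently Theorem~\ref{theorem:Nielsen1}) already realizes each of these by a fiberwise translation, which lies in $\Fcal$. For the quotient I will use the lattice-theoretic fact that $\Gamma_d(e)$, the spinor-orientation-preserving subgroup of $\Orth(\Lambda_d(e))$, is generated by the reflections $s_c$ in $(-2)$-vectors $c\in\Lambda_d(e)$. Each such $c$ lifts to a $(-2)$-vector $\tilde c\in e^\perp$: indeed all lifts $\tilde c+ke$ have the same square because $e$ is isotropic and $\tilde c\cdot e=0$, so that square equals $c\cdot c=-2$. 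The reflection $s_{\tilde c}$ then fixes $e$ (as $\tilde c\cdot e=0$), preserves the spinor orientation (it negates the negative vector $\tilde c$, hence acts trivially on the orientation of maximal positive-definite subspaces), and descends on $e^\perp/\ZZ e$ to $s_c$; thus $s_{\tilde c}\in\Gamma_{d,e}$ and the $s_{\tilde c}$ map onto a generating set of $\Gamma_d(e)$. By Theorem~\ref{thm:smallsupport}, each $s_{\tilde c}$ is induced by the fiber-preserving Dehn twist $\tilde\tau(C)$ about the $(-2)$-sphere $C\subset\pi^{-1}(U_c)$ representing $\tilde c$, so $s_{\tilde c}\in\rho_d(\Fcal)$. (In fact $E(e\wedge c)$ is the product of the two $(-2)$-reflections about $c$ and $e-c$, realized by $\tilde\tau(C')\tilde\tau(C)^{-1}$ in Theorem~\ref{thm:smallsupport}(2)(b); so $\Gamma_{d,e}$ is generated purely by classes of fiber-preserving Dehn twists and one could bypass the Mordell--Weil realization entirely.)

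Assembling these, any $\gamma\in\Gamma_{d,e}$ is a word in the above generators, and the corresponding composite of fiberwise translations and Dehn twists is a single map in $\Fcal$ inducing $\gamma$ on $\Lambda_d$; this proves $\Gamma_{d,e}\subseteq\rho_d(\Fcal)$. Note that all these maps are genuine diffeomorphisms, hence automatically lie in $\Gamma_d$ by Friedman--Morgan, so no separate verification is needed that the generating word lands in the correct index-$2$ subgroup.

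The main obstacle is the purely lattice-theoretic input that $\Gamma_d(e)$ is generated by reflections in $(-2)$-vectors of the indefinite even unimodular lattice $\Lambda_d(e)$ of signature $(2d-2,10d-2)$ (for $d=1$ the lattice is $\Eb_8(-1)$ and this is the classical identity $\Orth(\Eb_8)=W(\Eb_8)$). The delicate point is the spinor-orientation bookkeeping: one must check that the subgroup generated by \emph{negative}-root reflections is exactly the index-$2$ subgroup $\Gamma_d(e)$, rather than something smaller, which uses that $\Lambda_d(e)$ contains abundant $(-2)$-vectors. Once this generation statement is in hand, the theorem follows formally, since the heavy geometric lifting—producing, for each root, a fiber-preserving diffeomorphism inducing the corresponding reflection—is precisely the content of Theorem~\ref{thm:smallsupport}.
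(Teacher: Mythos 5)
Your reduction is sound and in fact mirrors the paper's own proof (given as Corollary \ref{cor:repisonto}): split the problem along the exact sequence $0\to\Lambda_d(e)\to\Gamma_{d,e}\to\Gamma_d(e)\to 1$, realize the kernel by fiberwise translations (Corollary \ref{corollary:small}/Proposition \ref{prop:MWcohomologyII}), realize a generating set of reflections of the quotient by fiber-preserving Dehn twists about $(-2)$-spheres (Theorem \ref{thm:smallsupport}), and conclude formally. Your spinor-orientation bookkeeping is also correct: a reflection in a negative-norm vector fixes a maximal positive-definite subspace pointwise, hence lies in the spinor-orientation-preserving subgroup, and lifts of $(-2)$-classes along $e^\perp\to e^\perp/\ZZ e$ stay $(-2)$-vectors since $e$ is isotropic and orthogonal to the lift.

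The genuine gap is the step you flag as "the main obstacle" and then assert as a known lattice-theoretic fact: that $\Gamma_d(e)=\Orth(\Lambda_d(e))^+$ is generated by reflections in $(-2)$-vectors. For $d=1$ this is indeed the classical $\Orth(\Eb_8)=W(\Eb_8)$, but for $d\geq 2$ the lattice $\Lambda_d(e)\cong d\Eb_8(-1)\perp(2d-2)\Ub$ is indefinite, and generation of $\Orth^+$ by root reflections in an indefinite lattice is not a formal consequence of having "abundant $(-2)$-vectors"; it is a strong-approximation-type statement that must be proved. This is exactly the content of the paper's Theorem \ref{thm:equinodal}, whose proof is the technical heart of Section \ref{section:smallsup}: it runs by induction on $d$ via the fiber-connected-sum decomposition (Theorem \ref{thm:fiberconnectedsum}), with the base case $d=2$ imported from \cite{FL2} (where it relies on the Torelli theorem for K3 surfaces), and with the inductive step resting on Ebeling's arithmetic characterization of complete vanishing lattices \cite{E} — one must verify that the relevant set of $(-2)$-vectors spans $\Lambda_d(e)$, lies in a single orbit of the reflection group it generates, and that the lattice contains a copy of $\Ab_2(-1)\perp 2\Ub$ supporting roots. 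None of this is off-the-shelf citable in the form you need, so your proof leaves the hardest part of the theorem unestablished. (Note also that the statement you need is available inside the paper as Theorem \ref{thm:equinodal}, part (i) of which is precisely what underlies the Theorem \ref{thm:smallsupport} you do cite; with that citation supplied, your argument becomes the paper's proof.)
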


This has the following remarkable consequence.

\begin{corollary}
If $d\ge 3$ then any two generic genus one fibrations  $\pi,\pi':M_d\to \PP^1$ are {\em topologically isotopic}: there exists a one-parameter family $\{h_t\in \Homeo(M_d)\}_{0\le t\le 1}$ with $h_0$ the identity and $h_1$ taking $\pi'$-fibers to $\pi$-fibers.
In particular,  every diffeomorphism of $M_d$ is topologically isotopic to a diffeomorphism that takes elliptic fibers to elliptic fibers.
\end{corollary}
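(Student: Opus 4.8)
The plan is to reduce the statement to two inputs already available: the realization of the full stabilizer $\Gamma_{d,e}$ by fiber-preserving diffeomorphisms (Theorem~\ref{thm:repisonto}), and the topological isotopy theorem for closed simply-connected $4$-manifolds (Quinn's theorem, used in the form underlying the equivalence (1)$\Leftrightarrow$(2) of Corollary~\ref{cor:isomorphisms1}, \cite{GGHKP}): a self-homeomorphism of $M_d$ inducing the identity on $H_2(M_d)=\Lambda_d$ is topologically isotopic to the identity. Everything else is bookkeeping about fiber classes.

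First I would prove the first assertion. Given two generic genus one fibrations $\pi,\pi':M_d\to\Pb^1$, Moishezon's connectedness theorem supplies a path of such fibrations joining them; feeding the resulting family into Ehresmann's theorem yields an orientation-preserving diffeomorphism $g\in\Diff(M_d)$ carrying the fibers of $\pi'$ to the fibers of $\pi$. Since the fiber class is locally constant along the path and $M_d$ is a fixed manifold, $\pi$ and $\pi'$ share the same fiber class $e$, so $g_*(e)=e$; as $g$ preserves orientation it preserves the spinor orientation (Friedman--Morgan), whence $g_*\in\Gamma_{d,e}$. I would then invoke Theorem~\ref{thm:repisonto} to produce a diffeomorphism $\psi$ with $\psi_*=g_*$ that takes $\pi$-fibers to $\pi$-fibers. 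The composite $h:=\psi^{-1}\circ g$ satisfies $h_*=\mathrm{id}$ on $\Lambda_d$ and still carries $\pi'$-fibers to $\pi$-fibers (because $\psi^{-1}$ preserves the set of $\pi$-fibers). By the topological isotopy theorem $h$ is topologically isotopic to the identity, and any such isotopy $\{h_t\}$ has $h_0=\mathrm{id}$ and $h_1=h$ carrying $\pi'$-fibers to $\pi$-fibers, which is exactly the required family. (Note this argument uses only $d\ge1$.)

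For the \emph{in particular} clause I would argue in parallel, now using L\"onne's theorem, valid for $d\ge3$, that $\rho_d(\Mod(M_d))=\Gamma_{d,e}$. Given an arbitrary $f\in\Diff(M_d)$, its induced map $f_*$ then lies in $\Gamma_{d,e}$, so Theorem~\ref{thm:repisonto} yields a fiber-preserving diffeomorphism $\psi$ with $\psi_*=f_*$. Consequently $(\psi^{-1}f)_*=\mathrm{id}$, so $\psi^{-1}f$ is topologically isotopic to the identity, and therefore $f$ is topologically isotopic to $\psi$, a diffeomorphism taking elliptic fibers to elliptic fibers.

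The main obstacle---and the reason the conclusion is only topological---is the gap between the homological and the smooth categories: once a map acts trivially on $\Lambda_d$ there is no hope of smooth isotopy triviality in general, since the smooth kernel of $\rho_d$ is not understood (cf.\ Remark~\ref{remark:comparison1}), and it is precisely Quinn's topological isotopy theorem that bridges this gap. The other essential, but already established, ingredient is Theorem~\ref{thm:repisonto}, which does the real geometric work of realizing the entire arithmetic stabilizer $\Gamma_{d,e}$ by fiber-preserving diffeomorphisms; the hypothesis $d\ge3$ enters only through L\"onne's identification of the image of $\rho_d$ with $\Gamma_{d,e}$, needed for the second assertion.
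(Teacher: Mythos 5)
Your skeleton is the paper's own: Moishezon \cite{M} supplies $g\in\Diff(M_d)$ carrying $\pi'$-fibers to $\pi$-fibers, Theorem~\ref{thm:repisonto} realizes $(g_*)^{-1}\in\Gamma_{d,e}$ by a fiber-preserving $\psi$, and \cite{GGHKP} converts $(\psi^{-1}\circ g)_*=\mathrm{I}$ into a topological isotopy; your treatment of the ``in particular'' clause via L\"onne is also exactly the intended one. But the first part contains a genuine error, located precisely where the hypothesis $d\ge 3$ has to be used: the claim that ``$\pi$ and $\pi'$ share the same fiber class $e$, so $g_*(e)=e$,'' justified by local constancy of the fiber class along the Moishezon path. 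Moishezon's connectedness theorem concerns the moduli of fibered surfaces, not fibrations on a fixed marked copy of $M_d$: after trivializing the family by Ehresmann you obtain a path of fibrations on $M_d$ running from $\pi'$ to $\pi\circ g$, not to $\pi$ itself. Local constancy of the fiber class along that path therefore yields only $g_*(e')=\pm e$ (where $e'$ is the $\pi'$-fiber class), which is just Moishezon's conclusion restated; it gives no relation between $e'$ and $e$. Accordingly, your parenthetical claim that the argument works for all $d\ge 1$ is false, and so is the statement itself for $d\le 2$: since $\rho_d$ surjects onto $\Gamma_d$ in that range \cite{Bo}, there is $\phi\in\Diff(M_d)$ with $\phi_*(e)\ne\pm e$, and $\pi'':=\pi\circ\phi^{-1}$ is then a generic genus one fibration whose fiber class is $\pm\phi_*(e)$. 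If $\pi''$ were topologically isotopic to $\pi$, the terminal homeomorphism $h_1$ of the isotopy would act trivially on $\Lambda_d$ and yet carry the $\pi''$-fiber class to $\pm e$, forcing $\phi_*(e)=\pm e$, a contradiction.

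The correct way to place $g_*$ in $\Gamma_{d,e}$ --- and the only point at which $d\ge 3$ enters the first assertion --- is the very input you reserve for the second one: by Friedman--Morgan \cite{FM2} and L\"onne \cite{loenne2021}, for $d\ge 3$ the image of $\rho_d$ equals $\Gamma_{d,e}$, so $g_*\in\Gamma_{d,e}$ for \emph{any} diffeomorphism $g$, with no discussion of fiber classes needed. (This also repairs a smaller leak: even granting $e'=e$, a fiber-preserving diffeomorphism satisfies only $g_*(e)=\pm e$, while membership in $\Gamma_{d,e}$ requires the $+$ sign; for $d\le 2$ the sign $-$ actually occurs, and it is again L\"onne's theorem that excludes it for $d\ge 3$.) With that substitution your argument coincides with the paper's proof.
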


\begin{proof}
Moishezon \cite{M} proved that there exists $h\in\Diff(M_d)$ 
taking $\pi'$-fibers to $\pi$-fibers.  By Theorem \ref{thm:repisonto} 
there exists $f\in\Diff(\pi)$ so that $(f\circ h)_\ast={\rm I}\in\Orth(\Lambda_d)$.  
By a recent result of Gabai-Gay-Hartman-Krushkal-Powell, building on earlier work of Kreck, Perron and Quinn (see \cite{GGHKP} and the references contained therein), $f\circ h$ 
is topologically isotopic to the identity.
\end{proof}

We further draw the reader's attention to Theorem \ref{thm:equinodal}, which refines Corollary \ref{corollary:small}. Briefly, it realizes certain Eichler transformations (that operate in $\Lambda_d$ and fix $e$) geometrically by an arc in the base $\Pb^1$ connecting two discriminant points about which to do an `anti-nodal fiber twist"

\subsection{Outline} In order to prove Theorem 
\ref{theorem:computeMW} we introduce in \S\ref{subsection:relmw}  `relative Mordell-Weil groups', which should also be a useful tool for future computations.  In \S\ref{subsection:fibsum} we find formulas for (relative) Mordell-Weil groups under fiberwise connect-sum.  A core component of this is the computation, in \S\ref{section:2node}, of the relative Mordell-Weil group of a certain compact 
$4$-manifold with nonempty boundary, equipped with a genus one fibration with two singular fibers of the `equinodal type'.  The gluing formulas then allow us to compute the differentiable Mordell-Weil group for any elliptic fibration (with all nodal fibers) of any $4$-manifold.   Note that none of the above concepts exists in the classical case of complex-holomorphic Mordell-Weil groups, as biholomorphic maps cannot fix a nonempty  open subset of $M$. Other tools used in this paper include spectral sequences,  the theory of quadratic forms and the theory of reflection groups.

\subsection{Acknowledgements} We would like to thank  R.~Hain, C.~McMullen and M.~Powell for helpful comments on an earlier version of this paper and I.~Smith  for drawing our attention to recent work of P.~Hacking and A.~Keating. We are grateful to both of these authors for subsequently explaining their paper \cite{HK} to us.  Finally, we thank  the referee, who did a meticulous job. The paper very much benefitted from the referee's comments.

\subsection{Dedication} We happily dedicate this paper to  our friend and colleague Gerard van de Geer on the occasion of his 
$75$ the birthday. This journal would not have seen the light of day  without  him.

\section{Background material}

In this section we state and prove some background results that will be used throughout the paper.

\subsection{Realizing unipotent transformations}
We begin by proving the claim made in Subsection \ref{subsect:Thurstontype} (but we will use this only for $d\le 2$).

\begin{proposition}\label{prop:isotropicfixedvector}
Any  unipotent $U\in \Orth(\Lambda_d)$ fixes some primitive isotropic vector $e\in \Lambda_d$.
\end{proposition}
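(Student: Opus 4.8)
The plan is to exploit the only structural feature of $U$ we are handed, namely that $N:=U-\mathrm{I}$ is nilpotent, together with the fact that $U$ preserves the intersection pairing. Let $k\ge 1$ be maximal with $N^{k}\neq 0$; such a $k$ exists as soon as $U\neq \mathrm{I}$ (the case $U=\mathrm{I}$ is immediate, since the indefinite unimodular lattice $\Lambda_d$, of signature $(2d-1,10d-1)$ with both summands positive, certainly contains primitive isotropic vectors, e.g.\ the fiber class $e$). Since $N\cdot N^{k}=N^{k+1}=0$, every vector in $\im N^{k}$ lies in $\ker N=\{x\in\Lambda_d: Ux=x\}$, so $\im N^{k}$ is a nonzero sublattice of the fixed sublattice of $U$. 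It therefore suffices to produce one nonzero isotropic vector in $\im N^{k}$ and then pass to the primitive vector of $\Lambda_d$ on the line it spans; that primitive vector is automatically fixed by $U$, since $U$ fixes the whole line pointwise.

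The crux is to show that $\im N^{k}$ is in fact totally isotropic. Let $N^{\ast}$ denote the adjoint of $N$ with respect to the intersection pairing. Because $U$ is an isometry we have $U^{\ast}=U^{-1}$, whence
\[
N^{\ast}=U^{-1}-\mathrm{I}=-U^{-1}(U-\mathrm{I})=-U^{-1}N .
\]
As $U^{-1}$ is a polynomial in $N$, it commutes with $N$, so $(N^{\ast})^{k}=(-1)^{k}N^{k}U^{-k}$, and therefore
\[
(N^{\ast})^{k}N^{k}=(-1)^{k}N^{k}U^{-k}N^{k}=(-1)^{k}N^{2k}U^{-k}=0,
\]
the final equality holding because $2k>k$ forces $N^{2k}=0$. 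Consequently $N^{k}v\cdot N^{k}w=v\cdot (N^{\ast})^{k}N^{k}w=0$ for all $v,w\in\Lambda_d$, which is exactly the assertion that $\im N^{k}$ is totally isotropic.

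To finish, pick any nonzero $w\in\im N^{k}$ and let $e$ be the primitive generator of the rank-one lattice $\QQ w\cap\Lambda_d$; then $e$ is primitive, isotropic and fixed by $U$, as required. The only step demanding care is the isotropy identity above, and I expect it to be essentially the sole obstacle: it is a clean algebraic computation rather than a delicate estimate. One could instead pass to $X=\log U$, a nilpotent element of the Lie algebra $\mathfrak{o}(\Lambda_d\otimes\QQ)$ which is genuinely skew-adjoint, and run the identical argument with $X$ in place of $N$ (so that $\im X^{k}$ is totally isotropic for the same reason and has the same kernel of fixed vectors); working directly with the integral operator $N$, however, has the advantage of never leaving the lattice.
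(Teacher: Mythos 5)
Your proof is correct, and it takes a genuinely different route from the paper. The paper argues via Jacobson--Morozov: it extends $U$ to an $\SL_2(\QQ)$-action on $\Lambda_d\otimes\QQ$, uses the isotypical decomposition and the signature $(2d-1,10d-1)$ to show that the fixed-point lattice $\Lambda_d^U$ is indefinite of rank at least $5$ (in fact at least $4d+2$), and then invokes Meyer's theorem to conclude that $\Lambda_d^U$ represents zero. Your argument replaces all of this arithmetic and representation-theoretic input with an elementary operator identity: since $N^\ast=-U^{-1}N$ and $N^{2k}=0$, the sublattice $\im N^k$ (for $k$ the top nonvanishing power) is totally isotropic and consists of fixed vectors, after which saturating a line gives the primitive fixed isotropic vector. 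What your approach buys is generality and economy: it proves the stronger structural fact that $\im(U-\mathrm{I})^k$ is totally isotropic, it works for any nondegenerate lattice or quadratic space regardless of rank and signature, and the only place where any property of $\Lambda_d$ enters is the trivial case $U=\mathrm{I}$, handled by the fiber class. What the paper's approach buys is quantitative information not visible in your argument --- a lower bound on the rank of the fixed lattice and its indefiniteness --- though none of that extra information is needed for the proposition as stated or for its application in Corollary \ref{cor:diffrep}.
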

\begin{proof}
Note that the fixed-point lattice  $\Lambda_d^U$ of $U$ in $\Lambda_d$ cannot be definite,  for then its orthogonal complement would contain another fixed vector.    
It then suffices to show that $\Lambda_d^U$ has rank $\ge 5$, for Meyer's theorem then implies that $\Lambda_d^U$ represents zero. 

We put $\Lambda_{d, \QQ}:=\QQ\otimes\Lambda_\QQ=H_2(M_d;\QQ)$. 
By the Jacobson-Mozorov theorem  there exists a homomorphism of $\QQ$-algebraic groups which on $\QQ$-points is given  by a $\rho: \SL_2(\QQ)\to \Orth(\Lambda_{d, \QQ})$ with 
$\rho(\begin{smallmatrix}
1 & 1\\
0 & 1
\end{smallmatrix})=U$.
We decompose $\Lambda_{d, \QQ}$ into irreducible  $\SL_2(\QQ)$-representations:
if $V_k$ stands for  the irreducible $\SL_2(\QQ)$-representation  of degree $k+1$  and $I_k$ for the space of $\SL_2(\QQ)$-homomorphisms $V_k\to \Lambda_{d, \QQ}$, then the natural map
\[
\oplus_{k\ge 0} V_k\otimes I_k\to  \Lambda_{d, \QQ}
\]
is an isomorphism of  $\SL_2(\QQ)$-representations and defines the isotypical decomposition 
$\Lambda_{d, \QQ}=\oplus_{k\ge 0} \Lambda_{d, \QQ}^{(k)}$. That decomposition  will be orthogonal with respect to the given intersection pairing  and hence induces in each $\Lambda_{d, \QQ}^{(k)}$ a nondegenerate $\SL_2(\QQ)$-invariant quadratic form. 

When  $k$ even,  $V_k$ comes with a nondegenerate $\SL_2(\QQ)$-invariant quadratic  form  of signature $(k/2, 1+k/2)$. This determines on $I_k$  a nondegenerate quadratic form
so that the isomorphism $ V_k\otimes I_k\cong \Lambda_{d, \QQ}^{(k)}$ is one of quadratic spaces.
From this  we see that $\Lambda_{d, \QQ}^{(k)}$  has a positive definite subspace of dimension $\ge  \frac{1}{2}k\dim I_k$. 

When $k$ is odd,  $V_k$ comes with a nondegenerate alternating  form.  This determines on $I_k$ a nondegenerate alternating  form such that $ V_k\otimes I_k\cong \Lambda_{d, \QQ}^{(k)}$ is an isomorphism  of quadratic spaces. In particular, the Witt index of  $\Lambda_{d, \QQ}$ is zero, and so  $\Lambda_{d, \QQ}^{(k)}$ will have  a positive definite subspace of dimension $\ge \frac{1}{2}(k+1)\dim I_k$. 

Since  $H_2(M_d;\QQ)$ is of dimension $12 d-2$ and has signature $(2d-1, 10d-1)$, we must have $\sum_k  (k+1)\dim I_k=12 d-2$ and 
$\sum_k \frac{1}{2}k \dim I_k\le 2d-1$. It follows that 
\[
\textstyle \dim I_0\ge \sum_k \big((k+1)-2k\big)\dim I_k\ge (12 d-2)-4(2d-1)=4d+2\ge 6,
\]
which implies that $\dim \Lambda_{d, \QQ}\ge 6$.
\end{proof}

\begin{corollary}\label{cor:diffrep}
Every unipotent element of $\G_2$ is realized by a diffeomorphism of $M_2$ which preserves the $\pi_2$-fibers. 
\end{corollary}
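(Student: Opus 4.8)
The plan is to manufacture from $U$ an elliptic fibration whose fiber class $U$ fixes, and then to realize $U$ fiberwise by combining the translation and braid constructions of the paper. Fix a unipotent $U\in\G_2$; by Proposition \ref{prop:isotropicfixedvector} it fixes a primitive isotropic vector $e\in\Lambda_2$. Because $M_2$ is a K3 surface, surjectivity of the period map provides a complex structure in which $e$ is nef and isotropic, so that $|e|$ is an elliptic pencil with fiber class $e$; after an arbitrarily small deformation I may assume the resulting fibration $\pi\colon M_2\to\Pb^1$ is generic, and by Moishezon's theorem \cite{M} it is fiberwise diffeomorphic to $\pi_2$. Thus $U$ lies in the stabilizer $\Gamma_{2,e}$, and it suffices to realize $U$ by a diffeomorphism preserving the fibers of $\pi$. (This is the intended reading of the statement, as in \S\ref{subsect:Thurstontype}: $U$ is asked to leave invariant \emph{some} generic genus one fibration, namely the one built from its own invariant isotropic vector.)

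Next I would cut $U$ along the short exact sequence
\[
0\to \Lambda_2(e)\to \Gamma_{2,e}\xrightarrow{\ q\ }\Gamma_2(e)\to 1
\]
of \S\ref{subsect:Thurstontype}. Its kernel $\Lambda_2(e)$ consists of the Eichler transformations $E(e\wedge c)$, and these are realized \emph{by fiberwise translations} via Theorem \ref{theorem:Nielsen1} (equivalently Corollary \ref{corollary:small}). Hence it is enough to realize the image $\bar U:=q(U)$—which is again unipotent, since the action of $U$ on $e^\perp$ preserves $\ZZ e$—by a fiber-preserving diffeomorphism $g$. Indeed, since $g_*$ and $U$ both fix $e$ and have the same image under $q$, the product $g_*U^{-1}$ lies in $\ker q=\Lambda_2(e)$ and is therefore an Eichler transformation; composing $g$ with the fiberwise translation representing the inverse Eichler transformation yields a fiber-preserving diffeomorphism inducing $U$.

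For the quotient I would use the diffeomorphisms that cover nontrivial braids of the base. By Theorem \ref{thm:smallsupport}(2)(b) the Dehn twist $\tilde\tau(C)$ about a $(-2)$-sphere $C$ is fiber-preserving, covers a simple braid interchanging two nodal values of $\pi$, and acts on $\Lambda_2(e)$ as the Picard--Lefschetz reflection in the $(-2)$-class of $C$; as $C$ ranges over the vanishing cycles of $\pi$ and their braid translates, these reflections generate the geometric monodromy group of the universal family of generic elliptic K3 surfaces acting on $\Lambda_2(e)$. The crux—and the step I expect to be the main obstacle—is to show that this monodromy group is large enough to contain $\bar U$, ideally that the fiber-preserving diffeomorphisms realize all of $\Gamma_{2,e}$. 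I would attack it by combining Borcea's surjectivity $\rho_2\colon\Mod(M_2)\twoheadrightarrow\G_2$ \cite{Bo} with the lattice structure of $\Gamma_{2,e}$: since the kernel is already realized by translations, what remains is a generation statement for $\Gamma_2(e)$ by the realizable reflections, which can in turn be reduced—via the isotropic flag fixed by the unipotent $\bar U$ (Proposition \ref{prop:isotropicfixedvector} applied inside the signature $(2,18)$ lattice $\Lambda_2(e)$)—to the parabolic, rank-two hyperbolic situation, where the relevant unipotent transformations are explicit and manifestly of monodromy type.
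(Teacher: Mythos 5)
Your first two steps are sound and run parallel to the paper's own argument: Proposition \ref{prop:isotropicfixedvector} produces an invariant primitive isotropic vector, and your use of period-map surjectivity plus Moishezon to turn that vector into the fiber class of a generic genus one fibration is a legitimate variant of the paper's shorter route (the paper instead conjugates that vector to $e$ using transitivity of $\G_2$ on primitive isotropic vectors together with Borcea's surjectivity of $\Mod(M_2)\to\G_2$). Your splitting of $U$ along $0\to\Lambda_2(e)\to\Gamma_{2,e}\to\Gamma_2(e)\to 1$, with the kernel handled by fiberwise translations and the correction argument $g_*U^{-1}\in\ker q$, is also correct.

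The gap is exactly where you flag it, and it is not a technicality: you never prove that $\bar U\in\Gamma_2(e)$ is realized by a fiber-preserving diffeomorphism. This is the entire content of the citation with which the paper finishes: Theorem 1.9 of \cite{FL2}, which states that every element of $\Gamma_{2,e}$ is realized by a diffeomorphism preserving the fibers, and which is proved there using the Torelli theorem for K3 surfaces (this paper's Theorem \ref{thm:equinodal} likewise attributes its $d=2$ case to \cite{FL2}). Your proposed attack does not substitute for it. Borcea's surjectivity realizes all of $\G_2$ by diffeomorphisms, but gives no control over whether they preserve a fibration, which is the whole point of the corollary. The ``isotropic flag'' reduction also does not close: if $\bar U$ fixes an isotropic $f\in\Lambda_2(e)$, then factoring $\bar U$ through the stabilizer of $f$ requires realizing Eichler transformations $E(f\wedge c)$ attached to an isotropic vector $f$ that is \emph{not} the fiber class; Theorem \ref{theorem:Nielsen1} and Corollary \ref{corollary:small} only realize Eichler transformations attached to $e$ itself, so each stage of your reduction poses a realization problem of the same difficulty as the original one. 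Likewise, generation of $\Orth(\Lambda_2(e))^+$ by the realizable (equinodal, Picard--Lefschetz) reflections is precisely the hard arithmetic input --- obtained in \cite{FL2} via Torelli for $d=2$, and via Ebeling's vanishing-lattice theorem for $d>2$ in Theorem \ref{thm:equinodal} --- and it does not follow from the lattice structure of $\Gamma_{2,e}$ or from Borcea's theorem.
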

\begin{proof}
Proposition \ref{prop:isotropicfixedvector} implies that such unipotent $\varphi\in \Orth(H_2)^+$ preserves a primitive isotropic vector.
Since $\Orth(H_2)^+$  acts transitively on such vectors and is the image of $\Mod(M_2)$, we can without  loss of generality
assume that $\phi$ preserves $e$. It follows from  Theorem 1.9 of \cite{FL2} that $\varphi$ can be realized by some fiber preserving diffeomorphism.
\end{proof}

\begin{remark}\label{rem:diffrep}
The restriction in Corollary \ref{cor:diffrep} to  $d=2$ is for a reason: when $d>2$,  the fiber class $e$ is  by the theorem of Friedman-Morgan  mentioned earlier (up to sign) intrinsic to $M_d$ and hence preserved up to sign by every diffeomorphism. On the other hand  
for $d=1$ it is not true as stated, for then  $\G_1=\Orth(\Lambda_1)$ and the lattice  $\Lambda_1$ is isomorphic to both
$\Ib\perp\Ib(-1)\perp \Eb_8(-1)$ and  $\Ib\perp 9\Ib(-1)$ (odd unimodular lattices of the same indefinite signature are isomorphic to each other).  Both have  $\Ib\perp \Ib(-1)$ as their first two summands. The difference of its  two basis elements define via these isomorphisms  primitive isotropic vectors $e, e'$ of $\Lambda_1$ with  $\Lambda_1(e)\cong \Eb_8(-1)$ and $\Lambda_1(e')\cong 8\Ib(-1)$.
These represent the class of an elliptic fibration resp.\ of a conic bundle and every primitive isotropic vector of $\Lambda_1$
is $\Orth(\Lambda_1)$-equivalent to $e$ or $e'$. The orthogonal transformation which via $\Lambda_1\cong \Ib\perp 9\Ib(-1)$ has the diagonal form $(1,1,-1,\dots, -1)$ does not fix a primitive isotropic vector representing an elliptic fibration.
We make this concrete in Example \ref{example:rationalelliptic} below.
\end{remark}

\begin{example}[{\em Rational elliptic surface}]
\label{example:rationalelliptic}
Let $F$ and $F'$ be smooth cubic curves in $\PP^2$ that intersect in nine distinct points 
$p_0, \dots , p_8$. Then the   cubics in $\PP^2$ passing through $p_0, \dots , p_8$ make up a pencil that is generated by $F$ and $F'$. 

Assume that each member of the pencil is irreducible and 
is separated on the blowup $Y\to \PP^2$ of $\PP^2$ at $\{p_i\}$, so that 
$Y$ is endowed with the structure of an elliptic fibration $\pi : Y\to \PP^1$.  The fiber class  $e$ of this fibration equals  $3\ell-\sum_{i=0}^8  e_i$. Each exceptional divisor $E_i$ over $p_i$ appears as a section of $\pi$. We identify $e^\perp/\ZZ e$ with the group of Eichler transformations as in Subsection \ref{subsect:Thurstontype}. 

Since both $E_0$ and $E_i$ are (images of) sections of $\pi$, there is a unique element of 
$\MW_{\hol}(\pi)$ that takes $E_0$ to $E_i$; we therefore simply denote that element by $E_i-E_0$ ($i=1, \dots, 8$).  The element 
$E_i-E_0$ lies in $e^\perp$ and  the associated Eichler transformation gives its  action on $H_2(Y)$.
The group $H_2(Y)$ has as a basis the classes $\{e_i:=[E_i]\}_{i=0}^8$ together with the class  $\ell$ of a line in $\PP^2$ not passing through $p_0, \dots , p_8$ (viewed as a class on $Y$).
This implies that the $\{E_i-E_0\}_{i=1}^8$ are also independent as elements of $\MW_{\hol}(\pi)$.  But the latter is strictly larger (one contains the other as a sublattice of index 3): for example, 
the line through $p_1$ and $p_2$ does not contain any other $p_i$ and hence its strict transform $L_{12}$ in $Y$ is a section. Note that its class is $\ell-e_1-e_2$.
So $L_{12}-E_3$ lies in $\MW_{\hol}(\pi)$ but its class $\ell-e_1-e_2-e_3$ is not an integral linear combination of the $\{e_i-e_0\}_{i=1}^8$. It is known (see also the discussion below) that 
\[
\MW_{\hol}(\pi)\cong e^\perp/\ZZ e.
\] 
 Since taking the quotient by $\ZZ e$ allows us to eliminate $e_0$,  a basis of $e^\perp/\ZZ e$ (and hence of  $\MW_{\hol}(\pi)$) is given by the images of 
$\ell -e_1-e_2-e_3, e_1-e_2,\dots , e_7-e_8$.  This is the standard basis of the even, unimodular, rank $8$, negative-definite  lattice $\mathbf{E_8}(-1)$. 

A conic fibration as mentioned in Remark \ref{rem:diffrep} is represented by the strict transforms of the lines in $\PP^2$ passing through $p_1$. Its fiber class  is $\ell-e_1$. 
\end{example}

\subsection{The translation subgroup}
\label{subsection:trans}
Recall that an {\em affine structure}  on manifold $M$ is specified by an atlas whose coordinate changes  are affine-linear and which is maximal for that property; it is equivalent to giving a flat, torsion-free connection on the 
tangent bundle $TM$.

A closed genus one surface admits such a affine structure. When endowed with it, the resulting affine surface $E$
becomes a principal homogeneous space for  the identity component  its automorphism group. 
We denote that group  (which is  isomorphic to the  $2$-torus $T^2$) by $\Trans(E)$ 
and refer to it as the \emph{translation group} of $E$. 
A  complex structure on $E$ determines such an affine structure, because there is then a unique flat 
metric compatible with the complex structure that gives that fiber unit volume. 
Then  $\Trans(E)$ is the  identity component of the automorphism group of the Riemann surface $E$ 
and hence is identified with  the Jacobian of $E$.

The situation is similar for a once or twice punctured 2-sphere: an affine structure 
makes such a surface a torsor of  the identity component  of its automorphism 
group, which is then isomorphic  to the additive group resp.\ the multiplicative group of 
$\CC$. There are no other orientable connected surfaces with that property.

\subsection{Various mapping class groups of genus one fibrations}
In order to understand more subtle properties of fiber-preserving diffeomorphisms of $M$, we introduce some related 
subgroups of $\Diff(M)$.  Let $\pi: M\to \PP^1$ be a genus one fibration. 
Define
\begin{align*}
\Diff(\pi):=&\{F\in\Diff(M): \ F \ \text{takes fibers of $\pi$ to fibers of $\pi$}\}\\
\Diff(M/\Pb^1):=&\{F\in\Diff(\pi): F \ \text{preserves each fiber of $\pi$}\}\\
\Diff^0(M/\Pb^1):=&\{F\in\Diff(M/\Pb^1): \text{the restriction of $F$ to each smooth fiber is}\\
&\text{isotopically trivial}\}\\
\end{align*}
For a singular (Kodaira)  fiber of $\pi$, the affine structure extends to the part  where the fiber is smooth (so \emph{a fortiori} of multiplicity  one). The fiberwise translations then extend to this fiber  by a certain group multiplication. So
\begin{equation}
\label{eq:sequence1}
\Trans(\pi)\subset \Diff^0(M/\Pb^1)\subset \Diff(M/\Pb^1)\subset\Diff(\pi)\subset\Diff(M)
\end{equation}

Applying $\pi_0$ to each group in \eqref{eq:sequence1} gives a string of homomorphisms
\begin{equation}
\label{eq:sequence2}
\MW(\pi)\to\Mod^0(M/\Pb^1)\to\Mod(M/\Pb^1)\to
\Mod(\pi)\to\Mod(M).
\end{equation}
Note that the image of $\Mod^0(M/\Pb^1)$ in $\Orth(H_M)$ lies in $e^\perp/\ZZ e$ for $e\in H_M$ the fiber class of the fibration $\pi$.

\begin{remark}\label{rem:index2}
If $\pi:M\to \PP^1$ is an elliptic fibration, then  $\Mod^0(M/\Pb^1)$  has index two in $\Mod(M/\Pb^1)$. In that case the nontrival coset is represented by the  involution that acts in each fiber as `minus the identity' relative to the given section. If all fibers are integral,  then such an involution acts in $e^\perp/\ZZ e$ as minus the identity.
\end{remark}

Let $\pi :X\to B$ be a (locally trivial) fiber bundle in the smooth category whose 
fibers are closed genus one surfaces.  Assume that both $X$ and $B$ are oriented; this will 
then  orient every fiber.  In case each fiber comes  endowed with an affine structure 
(depending smoothly on the base point), then  the structure group of $\pi$ is the 
semi-direct product  $T^2\rtimes \SL_2(\ZZ)$. This determines a subgroup 
$\Diff^+_\aff (\pi)\subset \Diff^+(\pi)$ as the subgroup of preserving this  affine structure.
The subgroup that in addition preserves each fiber, 
\[
\Diff^+_\aff(X/B):=\Diff^+(X/B) \cap \Diff^+_\aff (\pi),
\] 
contains 
the group $\Trans(X/B)$ of  fiberwise translations as a normal, abelian  subgroup.  We have denoted the  connected component group of $\Trans(X/B)$ by $\MW(\pi)$; for any of the other diffeomorphism groups,  we denote its connected component group by replacing 
$\Diff$ by $\Mod$. 

We will need the following ``families version'' of a theorem of Earle-Eells.  Let $\Diff^0(X/B)$ 
 denote the subgroup of $\Diff(X/B)$ of diffeomorphisms that induce in each fiber $X_b$ the identity on $H_1(X_b)$.  (Warning: 
 $\Diff^0(X/B)$ need not be the identity component of $\Diff(X/B)$).
 
\begin{proposition}
\label{prop:trans2}
 The inclusion $\Trans(\pi)\subset \Diff^0(X/B)$ induces a surjection on arcwise connected components. 
\end{proposition}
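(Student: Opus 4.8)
The plan is to recognize both groups as spaces of sections of fibre bundles over $B$, and to reduce the statement to the genus one case of the Earle--Eells theorem together with a standard comparison of section spaces. Write $X_b:=\pi^{-1}(b)$. A fibre-preserving diffeomorphism of $X$ lying over $\mathrm{id}_B$ is the same thing as a smooth section of the bundle of groups $p:\Ecal\to B$ whose fibre over $b$ is $\Diff^+(X_b)$; under this identification $\Diff(X/B)=\Gamma(B,\Ecal)$, the space of sections. Since for the two-torus the natural map $\pi_0\Diff^+(X_b)\to\aut H_1(X_b)$ is injective (its image being $\SL_2(\ZZ)$), a fibre-preserving diffeomorphism acts trivially on $H_1(X_b)$ precisely when its restriction to $X_b$ lies in the identity component $\Diff_0(X_b)$. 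Thus the defining condition of $\Diff^0(X/B)$ singles out exactly the sub-bundle $\Ecal^0\subset\Ecal$ with fibre $\Diff_0(X_b)$, so that $\Diff^0(X/B)=\Gamma(B,\Ecal^0)$. Likewise the affine structures, varying smoothly with $b$, pick out a sub-bundle $\Tcal\subset\Ecal^0$ with fibre the translation torus $\Trans(X_b)\cong T^2$, and $\Trans(\pi)=\Gamma(B,\Tcal)$. The inclusion under consideration is then the map $\Gamma(B,\Tcal)\to\Gamma(B,\Ecal^0)$ induced by $\Tcal\hookrightarrow\Ecal^0$.

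Next I would invoke Earle--Eells fibrewise. For a closed genus one surface equipped with its affine structure, the inclusion $\Trans(X_b)=T^2\hookrightarrow\Diff_0(X_b)$ of the translation torus is a homotopy equivalence; this is precisely the genus one case of the Earle--Eells theorem. Consequently $\Tcal\hookrightarrow\Ecal^0$ restricts to a homotopy equivalence on every fibre, i.e.\ it is a fibrewise (weak) homotopy equivalence over $B$.

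The comparison of section spaces then finishes the argument. The base $B=\Pb^1\cong S^2$ is a compact manifold, so $\Ecal^0\to B$ and $\Tcal\to B$ are Hurewicz fibrations, and by Dold's theorem a fibrewise homotopy equivalence between fibrations over such a base induces a homotopy equivalence on spaces of sections. Applying this to $\Tcal\hookrightarrow\Ecal^0$ shows that $\Gamma(B,\Tcal)\to\Gamma(B,\Ecal^0)$ is a homotopy equivalence; in particular it is a bijection on path components, which is stronger than the asserted surjectivity. Equivalently, and more by hand, one may argue by obstruction theory over a CW structure on $S^2$: since the inclusion is a fibrewise homotopy equivalence, the relative homotopy groups $\pi_*\bigl(\Diff_0(X_b),\Trans(X_b)\bigr)$ vanish, so every section of $\Ecal^0$ can be deformed through sections into $\Tcal$ cell by cell, with no obstruction.

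The main obstacle is purely technical and lies in passing from the finite-dimensional fibres to the infinite-dimensional diffeomorphism groups. Concretely, one must equip $\Ecal^0$ and $\Tcal$ with function-space topologies making $p$ a genuine fibration, and one must reconcile the space of smooth sections — which is what $\Trans(\pi)$ and $\Diff^0(X/B)$ literally are — with the continuous section spaces to which Dold's theorem applies. This is handled by the standard fact that, for such bundles over a compact manifold, the inclusion of smooth into continuous sections is a homotopy equivalence. Once these identifications are in place the argument is formal, the essential geometric input being the genus one Earle--Eells equivalence.
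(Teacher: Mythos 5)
Your argument only covers the case where $\pi$ is a locally trivial torus bundle, and that is not the generality in which this proposition is proved and used. The statement concerns $\Trans(\pi)$ for a genus one fibration, and the paper's proof explicitly allows singular (Kodaira) fibers: it chooses a triangulation of $B$ whose vertex set contains the discriminant and devotes a separate step to the singular fibers. This is exactly where your proof breaks down. Over a critical value $b$, the fiber $X_b$ is a nodal or cuspidal curve, its translation group $\Trans(X_b)$ is a copy of $\CC^\times$ or $\CC$ rather than $T^2$, and the family of groups $\{\Diff(X_b)\}_{b\in B}$ is not locally trivial in $b$. So there is no fibration $\Ecal^0\to B$ with fiber the identity component of $\Diff(X_b)$ to which Dold's theorem could be applied, and Earle--Eells, being a statement about closed surfaces, says nothing about whether $\Trans(X_b)\hookrightarrow \Diff^0(X_b)$ is a homotopy equivalence when $X_b$ is singular. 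The paper's substitute at such fibers is a genuinely different ingredient: one first deforms a given $f\in\Diff^0(X/B)$ so that it is the identity near the singular points of the fibers, and then uses that the group of compactly supported diffeomorphisms of $\CC$ or of $\CC^\times$ deformation retracts onto the identity. Without an argument of this kind, your proof establishes nothing for the fibrations $\pi_d:M_d\to\Pb^1$ to which the proposition is actually applied, all of which have $12d$ singular fibers.

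Two secondary points. First, you assume $B=\Pb^1\cong S^2$; in this proposition $B$ is a general oriented base, and in later applications it can be a disk or a surface with boundary. Your Dold/obstruction-theoretic argument works over any CW base, so this restriction is harmless but should not be presented as part of the hypotheses. Second, within the locally trivial setting your route --- fiberwise Earle--Eells equivalence plus Dold's theorem on section spaces (equivalently, cell-by-cell obstruction theory) --- is a correct and arguably cleaner packaging of the same skeleton-by-skeleton induction that the paper carries out by hand, and it even yields a bijection on $\pi_0$ rather than just surjectivity; the one essential thing missing is the local analysis at the singular fibers.
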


\begin{proof}
This is known to be so when $B$ is a point; in fact, a theorem due to 
Earle-Eells (\cite{EE}, last Corollary of \S 11) asserts that there exists a strong deformation retraction 
\[r: [0,1]\times \Diff^0(T^2)\to \Diff^0(T^2)\] of $\Diff^0(T^2)$  onto the translation group $\Trans(T^2)$ (so that it is a copy of $T^2$),  i.e., 
$r_t$ is the identity on  $\Trans(T^2)$ for all $t\in [0,1]$,  $r_0$ is the identity and $r_1$ has image $\Trans(T^2)$. 

Choose a smooth triangulation $\Sigma$ of $B$ whose vertex set contains the discriminant.
Let $f \in\Diff^0(X/B)$. The first step is to find a path in $\Diff^0(X/B)$ that connects $f$ with an $f_0$ that is
the identity over a regular neighborhood of the $0$-skeleton of $\Sigma$. For the smooth fibers we can 
invoke the theorem of Earle-Eells, but the singular fibers need special care:  
first choose a path in $\Diff^0(X/B)$ that connects $f$ with an $f'_0$ that is the identity near the singular points of the fibers. Then $f'_0$ induces in the smooth part of each singular fiber (which is a copy of $\CC$ or $\CC^\times$) a compactly supported  diffeomorphism. It  is well-known that the compactly supported  diffeomorphisms of such a surface have the identity element as a strong deformation retract. The construction of $f_0$ is then straightforward.

The rest of the argument follows a standard  pattern: if  $f_{k-1}\in \Diff^0(X/B)$ is  a fiberwise translation on a regular neighborhood of the $(k-1)$-skeleton for some  $k\in \{1,2\}$, then we prove that  there exists a path in $ \Diff^0(X/B)$ from $f_{k-1}$ to some $f_k$ that  is a fiberwise translation over a regular neighborhood of the $k$-skeleton. This will of course imply the proposition. 
If $\sigma$ and  a $k$-simplex of $\Sigma$ and we choose a trivialization $X_{|\sigma|}\to T^2$  of $\pi_{|\sigma|}: X_{|\sigma|}\to |\sigma|$ as an affine family, then $f_{k-1}$ defines a map
$ |\sigma|\to \Diff^0(T^2)$ which takes values in $\Trans(T^2)$ on a neighborhood of the boundary. The Earle-Eells theorem then provides 
the  desired path over $\sigma$ from
$f_{k-1}$ to a map taking its values in $\Trans(X/B)$. We omit the details.
\end{proof}

\subsection{Translations near a Kodaira fiber}
We now recall what happens near a singular fiber. Assume that  $B$ is an open complex disk, $o\in B$ and that  the genus one fibration 
$\pi:X\to B$ has  $X_o$ 
as its unique singular fiber (of Kodaira type). The inclusion $X_0\subset X$ is then a deformation retract and in particular a homotopy equivalence.  
Denote by  $\Trans(X_o)$ the group of  automorphisms of 
$X_o$ that extend to a $B$-automorphism of $X$ acting  as  fiberwise translations in the smooth fibers. 

The Kodaira classification shows that the group $\Trans(X_o)$
acts transitively on the part of $X_o$ where it is smooth (we use here smooth in the sense 
of algebraic geometry and so this implies reduced). 
This smooth part need not be connected and hence the group 
$\pi_0(\Trans(X_o))$ of connected components, which is isomorphic to 
$\Trans(X_o)$ modulo its identity component, may be nontrivial. In fact, this torsion group is faithfully represented by its action 
on the intersection graph of the Kodaira fiber 
(which is an affine Dynkin): it permutes  simply transitively the vertices that  have coefficient 1 in the fiber class. In terms of the associated finite root system,  $\pi_0(\Trans(X_o))$ is  identified with the weight lattice modulo the root lattice. The latter  has here the more concrete description  as a discriminant group:  the class  $e\in H_2(X)$ generates the radical of the intersection pairing on $H_2(X)$, so that $\overline H_2(X):=H_2(X)/\ZZ e$ is  nondegenerate  and 
then $\pi_0(\Trans(X_o))$ is naturally identified with the finite abelian group
\[
\discr H_2(X):=\overline H_2(X)^\vee/\overline H_2(X).
\]
This group is trivial if and only when $X_o$ is integral (a nodal or a cuspidal curve, but of course also when it is smooth) or is of Kodaira type $\text{II}^*$
(as $\hat E_8$-fiber).
The identity component of $\Trans(X_o)$ is isomorphic to the multiplicative group of $\CC$ in case $X_o$ is of type $\textrm{I}_r$  (the affine Coxeter group is then of affine type $\hat A_{r-1}$) and to the  additive  group of $\CC$ otherwise.

For any genus one fibration $\pi :X\to B$  we thus get,  by assigning to an open subset $U\subset B$ the group of fiberwise translations  over $U$, a  sheaf of abelian topological groups  $\Tscr_\hol(\pi)$ on $B$. We define its 
$C^\infty$-counterpart as  $\Tscr(\pi):=\Ecal_B\otimes_{\Ocal_B}\Tscr_\hol(\pi)$, where $\Ecal_B$ is the sheaf of smooth 
$\RR$-valued functions on $B$. So here the translation may depend in 
a differentiable manner on the base point. 
We will write $\Trans(\pi)$ for the group of global sections of $\Tscr(\pi)$.

\section{The differentiable Mordell-Weil group }
\label{section:MW}
In this section we introduce a smooth version of the holomorphic Mordell-Weil group.  This group already made an implicit  appearance in the preceding section, but here we give the formal definition and develop this notion in a more systematic manner.

\subsection{Smooth Mordell-Weil groups} In the rest of this section $B$ will always stand for  an \emph{oriented} smooth surface of finite type with compact (possibly empty) boundary $\p B$ and interior $\mathring B$. If it has  a complex structure, we will explicitly say so.

The preceding  leads to the following notions. A \emph{genus one fibration} over $B$  is
a proper differentiable map $\pi: X\to B$, where $X$ is an oriented  $4$-manifold with boundary $\p X=\pi^{-1}\p B$ of which each  smooth fiber comes with an affine structure structure smoothly depending on the base point  that makes that fiber a torsor for  a torus. We demand that the discriminant $D_\pi$ of $\pi$ (i.e., the set of critical values) is contained in $\mathring B$ and that each of its points has a neighborhood  over which  $\pi$ is isomorphic to a Kodaira degeneration by an orientation preserving  diffeomorphism that is fiberwise affine. In that case, the sheaf $\Tscr(\pi)$ and its group $\Trans(\pi)$ of global sections is still defined.
\begin{definition}[{\bf Relative Mordell-Weil group}]
\label{def:MWdiff} 
With notation as above, the (smooth) \emph{Mordell-Weil group}  $\MW(\pi)$ is the group of connected components of the group $\Trans(\pi)$ of diffeomorphisms of $B$ that are fiberwise translations (as this is clearly an abelian group, we write this group additively). 

More generally, if $K\subset B$ is a closed subset, then
$\MW(\pi, \pi_K)$ stands for  the group of connected components of the group of diffeomorphisms of $M$ that are fiberwise translations and are the identity over a neighborhood of $K$; in case $K=\p B$, we often write 
$\p\pi$ for $\pi_K$, the map $\pi^{-1}K\to K$ induced by $\pi$.
\end{definition}

If there are no singular fibers (so that we have an affine  torus bundle), then the above definition makes of sense for any base manifold. With this mind, it is not hard to see that there is 
an exact sequence
\[
\MW(\pi, \p\pi)\to \MW(\pi)\to \MW(\p\pi).
\]
Although our main interest will be when the singular fibers are all nodal, we also sometimes  have to deal with other Kodaira fibers. 
We note that a section $\sigma$ of $\pi$ will meet every fiber transversally and will do so in a smooth point of that fiber (so that the component on which lies must have multiplicity one in the fiber class): a local equation for the fiber over $p\in S$,
pulled back along $\sigma$, will have a simple zero at $s$.

\subsection{A cohomological characterization of $\MW(\pi)$} 
Let $\pi: X\to B$ be as above. 
If $X_b=\pi^{-1}b$ is smooth then the translation group  $\Trans (X_b)$ is naturally isomorphic to 
the torus $H_1(X_b; \RR)/H_1(X_b; \ZZ)$, which we identify via Poincar\'e duality with 
$H^1(X_b; \RR)/H^1(X_b; \ZZ)$. This almost remains true if $X_b$ is a nodal curve with singular point $p$, for then 
\begin{multline*}
\Trans (X_b)\cong H_1(X_b\ssm \{p\}; \CC)/H_1(X_b\ssm \{p\}; \ZZ)\cong\\\cong  
H^1(X_b,\{p\}; \CC)/H^1(X_b, \{p\}; \ZZ) \cong H^1(X_b; \CC)/H^1(X_b; \ZZ)\cong\CC^\times
\end{multline*}
where the second isomorphism is given by Alexander duality.  This group contains the circle group $H^1(X_b; \RR)/H^1(X_b)$ as a maximal compact subgroup, and the inclusion is a homotopy equivalence. 

In case $X_b$ is a cuspidal curve, $\Trans (X_b)\cong \CC$ and $H^1(X_b; \CC)/H^1(X_b)$ is trivial, so if we think of $H^1(X_b; \CC)/H^1(X_b)$ as the identity element of $\Trans(X_b)$, then this still a homotopy equivalence.
This shows that if  $ \Tscr^c(\pi)$ denotes
the subsheaf  of  $\Tscr(\pi)$ for which the translations belong to a maximal compact subgroup, then  the quotient $\Tscr(\pi)/\Tscr^c(\pi)$ has its support on the discriminant $D_\pi$ with fibers copies of $\RR$ or $\CC$. This implies that $\Tscr^c(\pi)\subset \Tscr^c(\pi)$ induces a map $H^q(B,\Tscr^c(\pi))\to H^q(B,\Tscr(\pi))$ that is an  isomorphism for $q>0$  and  
induces for $q=0$ an isomorphism on  their  connected component groups (note that these spaces of sections are topological groups).

\begin{remark}\label{rem:sheafconvention}
In what follows we adhere to topological conventions when dealing with sheaf cohomology:  if  $(B, C)$ is a topological pair   with $C$ closed in $B$ and $\Fscr$ is an abelian  sheaf on the open subset $B\ssm C$, then $H^q(B,C; \Fscr)$  stands for $H^q(B; j_!\Fscr)$, where $j:B\ssm C\subset B$ and $ j_!\Fscr$ is the extension of $\Fscr$ to $B$ by zero. This means that if $i: C\subset B$ and 
$\Gscr$ is an abelian sheaf on $B$, then there is a long  exact sequence 
\[
\cdots\to  H^{q}(B,C; j^*\Gscr)\to H^q(B, \Gscr)\to H^q(C, i^*\Gscr)\to H^{q+1}(B,C; j^*\Gscr)\to \cdots 
\]
We here often abuse notation a bit by suppressing $j^*$ and $i^*$ in the coefficient sheaves.
\end{remark}

\begin{proposition}[{\bf Cohomological characterization of MW groups}]
\label{prop:MWcohomology}
 Assume that $\pi$  has only integral fibers  (all singular  fibers are either nodal or cuspidal). Then there are natural isomorphisms 
 \[
 \MW(\pi)\cong H^1(B, R^1\pi_*\ZZ) \ \ \text{and}\ \  \MW(\pi, \p\pi) \cong H^1(B, \p B; R^1\pi_*\ZZ).
 \]
 Moreover,  for $q>0$, there are natural isomorphisms 
\[
H^q(B, \Tscr(\pi))\cong  H^{q+1}(B,R^1\pi_*\ZZ), \quad 
H^q(B,\p B;\Tscr(\pi))\cong  H^{q+1}(B,D_\pi;R^1\pi_*\ZZ).
\]
\end{proposition}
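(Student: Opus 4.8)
The plan is to realize all four isomorphisms as connecting maps of a single short exact sequence of sheaves on $B$, the \emph{exponential sequence} of the fibrewise translation groups. Write $\VV$ for the rank-two real $C^\infty$-vector bundle on $B$ obtained by extending the local system $R^1\pi_*\RR$ over $B\ssm D_\pi$ across the discriminant (for instance as the real bundle underlying Deligne's canonical extension, which is available since the local monodromies are unipotent), and let $\Vcal$ denote its sheaf of smooth sections. Fibrewise exponentiation (reduction modulo the period lattice) gives a morphism $\Vcal\to\Tscr(\pi)$, and I claim it sits in a short exact sequence
\begin{equation}\label{eq:expplan}
0\longrightarrow R^1\pi_*\ZZ\longrightarrow \Vcal\longrightarrow \Tscr(\pi)\longrightarrow 0 .
\end{equation}
Over $B\ssm D_\pi$ this is the usual presentation of the Jacobian torus $H^1(X_b;\RR)/H^1(X_b;\ZZ)$, with kernel the local system $R^1\pi_*\ZZ$ of integral classes.

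Granting exactness, the mechanism is formal. The sheaf $\Vcal$ is a module over the soft (fine) sheaf of rings $\Ecal_B$, hence is soft and therefore acyclic: $H^q(B,\Vcal)=0$ for $q>0$, and likewise $H^q(B,\p B;\Vcal)=0$ for $q>0$ (a soft sheaf has surjective restriction $\Gamma(B,\Vcal)\to\Gamma(\p B,\Vcal)$, which kills the one borderline group). First I would read off the higher groups from the long exact cohomology sequence of \eqref{eq:expplan}: since the two neighbouring $\Vcal$-terms vanish, the connecting maps give $H^q(B,\Tscr(\pi))\cong H^{q+1}(B,R^1\pi_*\ZZ)$ for all $q\ge 1$, and similarly relative to $\p B$. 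For the degree-zero statement I would note that the connecting map $\delta\colon \Gamma(B,\Tscr(\pi))=\Trans(\pi)\to H^1(B,R^1\pi_*\ZZ)$ is surjective (as $H^1(B,\Vcal)=0$) with kernel the image of the contractible real vector space $\Gamma(B,\Vcal)$; a path-lifting argument for the ``covering'' $\Vcal\to\Tscr(\pi)$, whose kernel $R^1\pi_*\ZZ$ has discrete spaces of sections, identifies this image with the identity component of $\Trans(\pi)$. Hence $\MW(\pi)=\pi_0\Trans(\pi)\cong H^1(B,R^1\pi_*\ZZ)$, and the same argument relative to $\p B$ gives $\MW(\pi,\p\pi)\cong H^1(B,\p B;R^1\pi_*\ZZ)$. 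The comparison $\Tscr^c(\pi)\hookrightarrow\Tscr(\pi)$ recorded just before the proposition may be invoked here to discard the non-compact $\RR$- and $\CC$-directions supported on $D_\pi$ before taking $\pi_0$; it affects none of these groups.

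The one delicate point in the bookkeeping is matching the support condition in the higher relative formula. The argument above produces $H^q(B,\p B;\Tscr(\pi))\cong H^{q+1}(B,\p B;R^1\pi_*\ZZ)$, and to rewrite the right-hand side as $H^{q+1}(B,D_\pi;R^1\pi_*\ZZ)$ I would compare the long exact sequences of the pairs $(B,\p B)$ and $(B,D_\pi)$. For $q\ge 2$ all three groups vanish, since then $q+1>\dim B=2$; the only borderline case is $q=1$, where the discrepancy between the two relative groups is the cokernel of the restriction $H^1(B,R^1\pi_*\ZZ)\to H^1(\p B,R^1\pi_*\ZZ)$, governed by the local invariant-cycle behaviour of $R^1\pi_*\ZZ$ at the Kodaira fibres. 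This is the step I would check most carefully.

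The main obstacle, however, is establishing exactness of \eqref{eq:expplan} \emph{at the points of} $D_\pi$, where the global formalism meets the local geometry of a Kodaira degeneration. Over a nodal fibre the translation group is $\CC^\times$ and over a cuspidal fibre it is $\CC$; in terms of the Néron model the required computation is that the stalk of the kernel of $\Vcal\to\Tscr(\pi)$ is exactly the group of monodromy-invariant integral classes. Concretely, a germ of a section of $\Vcal$ mapping to the identity translation must restrict to a single-valued flat integral section on a punctured neighbourhood, and by unipotence of the Picard--Lefschetz monodromy only the invariant (surviving vanishing-cycle) direction extends across the singular point; this yields $(R^1\pi_*\ZZ)_b$, of rank one at a node and rank zero at a cusp, matching the rank drop of $\Tscr(\pi)/\Tscr^c(\pi)$. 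Verifying this local exactness, together with surjectivity of the exponential map onto the degenerate groups $\CC^\times$ and $\CC$, is where the substance lies; once it is in place, the cohomological steps above are routine.
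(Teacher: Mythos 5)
Your strategy coincides with the paper's in its essential mechanism: both resolve the translation sheaf by a short exact sequence of sheaves on $B$ with kernel $R^1\pi_*\ZZ$ and a soft middle term, then combine acyclicity of soft sheaves, the long exact sequence, and the discreteness of $H^1(B;R^1\pi_*\ZZ)$ against the connectedness of the vector space of sections of the middle sheaf to read off $\MW(\pi)$ as a $\pi_0$. Where you differ is in the sequence itself. The paper uses $0\to R^1\pi_*\ZZ\to\Escr_B\otimes_\ZZ R^1\pi_*\ZZ\to\Tscr^c(\pi)\to0$, whose middle term has stalk rank dropping along $D_\pi$ and whose quotient is only the compact-type subsheaf, and it must then invoke the comparison $\Tscr^c(\pi)\subset\Tscr(\pi)$ set up just before the proposition; you instead exponentiate a genuine rank-two vector bundle onto all of $\Tscr(\pi)$. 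Your route buys a cleaner argument (no $\Tscr^c$ versus $\Tscr$ bookkeeping at all), and its price is exactly the local exactness at the Kodaira points, which you correctly identify as the substantive step. That step does hold, provided $\VV$ is taken to be the bundle of fibrewise Lie algebras of the translation groups (the N\'eron-model group space) --- the only choice for which ``fibrewise exponentiation'' literally makes sense: the kernel sheaf is then the direct image of the period lattice from $B\ssm D_\pi$, whose stalk at $b\in D_\pi$ is the group of local monodromy invariants, i.e.\ $(R^1\pi_*\ZZ)_b$, and $\exp$ surjects onto $\CC^\times$ resp.\ $\CC$.

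Two concrete corrections. First, your stated reason for the existence of the extension --- ``the local monodromies are unipotent,'' hence Deligne's canonical extension --- fails at cuspidal fibres: the Kodaira type $\mathrm{II}$ monodromy has order $6$, is not unipotent, and has no nonzero invariants (which is why $(R^1\pi_*\ZZ)_b=0$ there); moreover its canonical extension carries no natural real structure, since no real logarithm of that monodromy has eigenvalues in $[0,1)$. Defining $\VV$ as the relative Lie algebra sidesteps all of this. Second, concerning the relative formula you flag as the delicate point: your derivation gives $H^q(B,\p B;\Tscr(\pi))\cong H^{q+1}(B,\p B;R^1\pi_*\ZZ)$, and you should stop there, because no reconciliation with the printed right-hand side is possible. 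For $q=1$ the two candidate answers differ by $\coker\bigl(H^1(B;R^1\pi_*\ZZ)\to H^1(\p B;R^1\pi_*\ZZ)\bigr)$, which need not vanish: for a fibration over a closed disk with a single nodal fibre one computes $H^2(B,\p B;R^1\pi_*\ZZ)\cong\ZZ$, while $H^2(B,D_\pi;R^1\pi_*\ZZ)\cong H^2(B;R^1\pi_*\ZZ)=0$. So the $D_\pi$ appearing in the statement should be $\p B$ (the two coincide when $\p B=\emptyset$, the only case in which this clause is used later in the paper); your proof establishes the corrected statement, and trying to force agreement with the printed one would be chasing an error rather than filling a gap.
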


\begin{proof}
There is (essentially by definition) a  short exact sequence 
\begin{equation}\label{eqn:transl}
0\to R^1\pi_{*}\ZZ\to \Escr_B\otimes_\ZZ R^1\pi_{*}\ZZ \to \Tscr^c(\pi)\to  0.
\end{equation}
Since $\Escr_B$ is a soft sheaf, so is $\Escr_B\otimes_\ZZ R^1\pi_*\ZZ $ and hence $H^q(B, \Escr_B\otimes_\ZZ R^1\pi_*\ZZ)=0$ for $q>0$. It follows that the long exact cohomology sequence associated to \eqref{eqn:transl} gives the exact sequence of topological groups
\begin{multline*}
0\to H^0(B,R^1\pi_*\ZZ)\to H^0(B, \Escr_B\otimes_\ZZ R^1\pi_*\ZZ)  \to  H^0(B,\Tscr^c(\pi))\to H^1(B,R^1\pi_*\ZZ)\to 0
\end{multline*}
and that for $q>0$:  
\[ H^q(B, \Tscr^c(\pi))\cong H^q(B, \Tscr(\pi))\cong  H^{q+1}(B,R^1\pi_*\ZZ).\]
Since $H^0(\pi, \Escr_B\otimes_\ZZ R^1\pi_*\ZZ)$ is a vector space (hence connected)
and $H^1(B,R^1\pi_*\ZZ)$ is discrete, it follows that 
$\MW(\pi)=\pi_0H^0(B, \Tscr(\pi))\cong \pi_0H^0(B,\Tscr^c(\pi))$ gets identified with $H^1(B,R^1\pi_*\ZZ)$.  The assertions regarding  $\MW(\pi, \p\pi)$ and
$H^q(B, \p B;\Trans(\pi))$ are obtained similarly.
\end{proof}

Proposition \ref{prop:MWcohomology} shows that in that situation (all fibers integral)  the groups  $\MW(\pi)$ and $\MW(\pi, \p\pi)$ are finitely generated if the discriminant $D_\pi$ is  finite.  According to the following theorem, this is true in general.

\begin{theorem}\label{thm:finitegen}
Assume the discriminant $D:=D_\pi$ finite. Then there are short exact sequences 
\begin{gather*}
H^1(B, D; R^1\pi_*\ZZ)\to  \MW(\pi)\to \oplus_{b\in D} \discr H_2(X_b)\to 0,\\
H^1(B, \p B\cup D;R^1\pi_*\ZZ)\to  \MW(\pi, \p\pi)\to \oplus_{b\in D} \discr H_2(X_b)\to 0.
\end{gather*}
In particular, $\MW(\pi)$ and $\MW(\pi, \p\pi)$ are finitely generated.
\end{theorem}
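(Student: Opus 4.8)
The plan is to upgrade Proposition~\ref{prop:MWcohomology} from the integral case to arbitrary Kodaira fibers by isolating the finite groups $\discr H_2(X_b)$ that live over the discriminant. Write $j\colon B\ssm D\hookrightarrow B$ for the open inclusion and $i\colon D\hookrightarrow B$ for the complementary closed inclusion. Since $D$ is finite and the stalk of $\Tscr(\pi)$ at $b$ is the germ group $\Trans(X_b)$, the canonical sequence $0\to j_!j^*\Fscr\to\Fscr\to i_*i^*\Fscr\to 0$ applied to $\Fscr=\Tscr(\pi)$ reads
\[
0\to \Tscr(\pi,\pi_D)\to \Tscr(\pi)\to \textstyle\bigoplus_{b\in D}\Trans(X_b)\to 0,
\]
where $\Tscr(\pi,\pi_D)=j_!\big(\Tscr(\pi)|_{B\ssm D}\big)$ is the subsheaf of translations vanishing near $D$ (so that $\pi_0 H^0=\MW(\pi,\pi_D)$), and the quotient is the skyscraper sheaf with stalk the local translation group $\Trans(X_b)$ at $b$.

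First I would identify the left-hand term. Over $B\ssm D$ every fiber is a smooth torus, hence integral, so the proof of Proposition~\ref{prop:MWcohomology} applies verbatim to $\Tscr(\pi)|_{B\ssm D}$; carrying it out with the extension-by-zero $j_!$, i.e.\ relative to $D$ in the sense of Remark~\ref{rem:sheafconvention}, yields a natural isomorphism
\[
\MW(\pi,\pi_D)\cong H^1(B,D;R^1\pi_*\ZZ),
\]
together with the higher isomorphisms $H^q\big(B,D;\Tscr(\pi)\big)\cong H^{q+1}(B,D;R^1\pi_*\ZZ)$ for $q>0$. This is exactly the group appearing on the left of the asserted sequences.

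Next I would pass to the long exact cohomology sequence of the displayed short exact sequence and read off component groups. Because the quotient is a skyscraper sheaf, its $H^0$ is $\bigoplus_b\Trans(X_b)$, whose component group is $\bigoplus_b\pi_0\Trans(X_b)=\bigoplus_b\discr H_2(X_b)$ by the local analysis recalled above (translations near a Kodaira fiber). Taking component groups collapses the cohomology sequence to the three-term sequence $\MW(\pi,\pi_D)\to\MW(\pi)\to\bigoplus_b\discr H_2(X_b)$. Exactness in the middle is the statement that a global fiberwise translation lies in the identity component of $\Trans(X_b)$ for every $b\in D$ precisely when, after correcting by an identity-component translation—which globalizes because $\Escr_B\otimes_\ZZ R^1\pi_*\ZZ$ is soft—it can be isotoped to one vanishing near $D$, i.e.\ when it comes from $\MW(\pi,\pi_D)$.

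The main obstacle is the surjectivity onto $\bigoplus_b\discr H_2(X_b)$: I must realize every prescribed collection of local component classes by one global fiberwise translation. Equivalently, splitting off the identity-component subsheaf $\Tscr^0(\pi)$ gives $0\to\Tscr^0(\pi)\to\Tscr(\pi)\to\bigoplus_b\discr H_2(X_b)\to 0$, and surjectivity is the vanishing of the connecting map $\bigoplus_b\discr H_2(X_b)\to H^1\big(B,\Tscr^0(\pi)\big)\cong H^2(B,R^1\pi_*\ZZ)$. I would prove this $b$ by $b$ using the Kodaira local structure: each class in $\pi_0\Trans(X_b)=\discr H_2(X_b)$ is represented by a fiberwise-affine $B$-automorphism of $X$ over a disk $U_b$ that permutes the components of $X_b$; since $\discr H_2(X_b)$ is finite and identity-component translations extend globally by softness, such a local representative can be glued to a global section, killing the obstruction. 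Granting this, the first sequence of the theorem follows, and the second is obtained identically with $\p B\cup D$ in place of $D$. Finite generation is then immediate: $H^1(B,D;R^1\pi_*\ZZ)$ is finitely generated because $B$ is of finite type, $D$ is finite, and $R^1\pi_*\ZZ$ is constructible with finitely generated stalks, while $\bigoplus_b\discr H_2(X_b)$ is finite; an extension of a finitely generated group by a finite group is finitely generated.
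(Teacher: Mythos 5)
Your strategy is the same as the paper's: compare $\MW(\pi)$ with the group $\MW(\pi,\pi_D)$ of translations supported away from the discriminant, identify that group with $H^1(B,D;R^1\pi_*\ZZ)$ by running Proposition \ref{prop:MWcohomology} over $B\ssm D$ (where every fiber is smooth, hence integral), map $\MW(\pi)$ to the local component groups $\oplus_{b\in D}\pi_0\Trans(X_b)\cong\oplus_{b\in D}\discr H_2(X_b)$, and deduce finite generation from constructibility of $R^1\pi_*\ZZ$ together with finiteness of the discriminant groups. Your treatment of the left-hand term and of exactness at the middle term (the bump-function argument showing that a translation whose germ at each $b\in D$ lies in the identity component of the stalk is isotopic to one that is the identity near $D$) is sound and is essentially what the paper does.

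The genuine gap is the step you yourself call the main obstacle: surjectivity onto $\oplus_{b}\discr H_2(X_b)$. The principle ``identity-component translations extend globally by softness'' is not valid: $\Tscr^0(\pi)$ is not soft (if it were, $H^1(B,\Tscr^0(\pi))$ would vanish and there would be nothing to prove), and $H^1(B,\Tscr^0(\pi))$ is exactly the obstruction group at issue; note also that your identification $H^1(B,\Tscr^0(\pi))\cong H^2(B,R^1\pi_*\ZZ)$ is itself unproved, since the resolution \eqref{eqn:transl} does not apply verbatim at non-integral fibers. More fundamentally, the ``$b$ by $b$'' gluing is circular. A local translation over $U_c$ representing $\xi\neq 0$ in $\discr H_2(X_b)$ restricts, on the linking circle of $b$, to a homotopically nontrivial family of translations: by Lemma \ref{lemma:MWcircle} its class lives in the coinvariants of the local monodromy on $H_1$ of a nearby fiber, and it is the image of $\xi$ under an injective map (for type $\Itwo$ this is exactly the injectivity statement in Remark \ref{rem:I2observation}, and it holds for every Kodaira type). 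So the local representative cannot be damped to the identity near $\p U_b$, and gluing it into a global translation requires producing a translation over $B\ssm U_b$ with the same nontrivial boundary class---which is precisely the extension problem whose obstruction (your connecting map) you set out to kill. In fact this obstruction is nonzero in examples, so the gap cannot be repaired in the stated generality: by the paper's own Remark \ref{rem:I2observation}, for a disk with one fiber of type $\Itwo$ one has $\MW(\pi',\p\pi')=0$ while $\discr H_2(X'_0)\cong\ZZ/2$, so the second sequence of Theorem \ref{thm:finitegen} cannot be exact at its right-hand end; similarly, for an isotrivial rational elliptic fibration over $\PP^1$ with two fibers of type ${\rm I}_0^*$, the image of $\MW(\pi)$ in $(\ZZ/2)^2\oplus(\ZZ/2)^2$ is only a diagonal copy of $(\ZZ/2)^2$. (The paper's own proof asserts this surjectivity in a single sentence without argument, so you have collided with a defect of the statement rather than missed the paper's trick.) What does survive---and is all that the finite-generation conclusion needs---is exactness at the middle term: the kernel of $\MW(\pi)\to\oplus_{b}\discr H_2(X_b)$ is the image of the finitely generated group $H^1(B,D;R^1\pi_*\ZZ)$, and the quotient embeds in a finite group.
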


Theorem \ref{thm:finitegen} in particular implies Theorem \ref{theorem:smoothMW4}.

\begin{proof}[Proof of Theorem \ref{thm:finitegen}]
We only do this for $\MW(\pi)$; the proof for $\MW(\pi, \p\pi)$ is similar. Let $D\subset B$ stand for the discriminant of $\pi$. Then there is a natural map 
$\MW(\pi, \pi_D)\to \MW(\pi)$. For every fiber $X_b$
(in particular, for a singular one),  the connected component group of the stalk $\Tscr(\pi)_b$ is a torsion group which acts simply transitively on the set of irreducible components of $X_b$ with multiplicity one. The evident map $ \MW(\pi)\to \oplus_{b\in D} \pi_0 \Tscr(\pi)_b$ is surjective and has as kernel the connected component group of the group of sections that are vanish  on $D$.

The sheaf $R^1\pi_*\ZZ$ is constructible. This implies that $H^1(B, D; R^1\pi_*\ZZ)$ (which is the same as 
$H^1_c(B\ssm D; R^1\pi_*\ZZ)$) is finitely generated. 
Since  $ \oplus_{b\in D} \discr H_2(X_b)$ is also  finitely generated, 
$\MW(\pi)$ will be as well.
\end{proof}

\subsection{Sections of $\pi$ and the action of $\MW(\pi)$ on $H^2(X)$}
If $\pi$ admits a section $\sigma$ then the space $\G(\pi)$ of sections of $\pi$ is a torsor under the group of fiberwise translations.  In this case 
the set  $\pi_0(\G(\pi))$ of connected components of $\G(\pi)$  is an $\MW(\pi)$-torsor. But sections of $\pi$ need not exist. 
 If $\sigma\in H^2(X)$ is the class of a section and $e\in H_2(X)$ is the image of the fundamental class of a general fiber,   then $\la \sigma | e\ra =1$ and so a necessary condition is that the linear form $\alpha \in H^2(X)\mapsto \la\alpha |e\ra \in \ZZ$ takes the value $1$. This is equivalent to $e$ being \emph{primitive} in the sense that it spans a copy of $\ZZ$ as a direct summand of $H_2(X)$. 

The following theorems gives (among other things) a sufficient condition for the existence of a section.  

\begin{proposition}[{\bf Existence of sections and action of $\MW(\pi)$ I}]
\label{thm:MWcohomologyI}
Let  $\pi :X\to B$ be a genus one fibration over  a compact, connected, oriented surface $B$ with nonempty boundary 
$\p B$.  Assume that each fiber of $\pi$ is integral.  Denote by $e\in H_2(X)$ the homology class of a general fiber and  by  $H^2(X)^e$ its annihilator 
in $H^2(X)$ (i.e., the cohomology classes that vanish on $e$). 

   Then $\pi$ admits a section. For every  $\tau\in \MW(\pi)$ there exists a unique $\g_\tau\in H^2(X)$ such that for all $\xi\in H^2(X)$: 
\[\tau(\xi)= \xi+\la \xi|e\ra \g_\tau\]
and the resulting map $\tau\in \MW(\pi)\mapsto  \g_\tau\in H^2(X)^e$ is an isomorphism of abelian groups.
\end{proposition}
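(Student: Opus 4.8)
The plan is to run the Leray spectral sequence of $\pi$ and feed it the translation-sheaf computation of Proposition \ref{prop:MWcohomology}; the whole argument rests on the single vanishing statement
\begin{equation*}
H^2(B, R^1\pi_*\ZZ)=0 .
\end{equation*}
I would obtain this as follows. Writing $L$ for the rank-two local system $R^1\pi_*\ZZ$ on the complement of the discriminant $D$, and using that $B$ is compact, this group is $H^2_c(B\ssm D; L)$, which by Poincar\'e--Lefschetz duality on the oriented surface $B\ssm D$ equals $H_0(B\ssm D, \p B; L)$. The latter vanishes precisely because $\p B\neq\emptyset$: each boundary circle surjects onto the $\pi_1$-coinvariants of $L$, so $H_0(\p B; L)\to H_0(B\ssm D; L)$ is onto. (The nonemptiness of $\p B$ is essential here.) With this in hand, existence of a section is immediate: the sheaf of germs of sections of $\pi$ is a torsor under $\Tscr(\pi)$ (locally trivial, since local sections through the smooth locus exist over a disk around any point of $B$, including the Kodaira points and the boundary circles), and its classifying class lies in $H^1(B, \Tscr(\pi))\cong H^2(B, R^1\pi_*\ZZ)=0$ by Proposition \ref{prop:MWcohomology}. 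Hence $\pi$ admits a global section $\sigma_0$, meeting each fiber once transversally in a smooth point.

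Next I would exploit the spectral sequence $E_2^{p,q}=H^p(B, R^q\pi_*\ZZ)\Rightarrow H^{p+q}(X)$. Since $B$ is homotopy equivalent to a $1$-complex we have $H^2(B,\ZZ)=0$, and the fibers being connected oriented curves give $R^0\pi_*\ZZ=R^2\pi_*\ZZ=\ZZ$. Together with the vanishing above this kills $E_2^{2,0}=H^2(B,\ZZ)$ and $E_2^{2,1}=H^2(B, R^1\pi_*\ZZ)$, so that in total degree $2$ the sequence degenerates and yields
\begin{equation*}
0\to H^1(B, R^1\pi_*\ZZ)\to H^2(X)\xrightarrow{\ \la\,\cdot\,|e\ra\ }H^0(B, R^2\pi_*\ZZ)=\ZZ\to 0 ,
\end{equation*}
the edge map being restriction to a fiber, i.e. pairing with $e$. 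In particular $e$ is primitive. Moreover the image of $H^1(B, R^1\pi_*\ZZ)$ is exactly $H^2(X)^e$, and combining with the identification $\MW(\pi)\cong H^1(B, R^1\pi_*\ZZ)$ of Proposition \ref{prop:MWcohomology} gives a canonical isomorphism $\MW(\pi)\cong H^2(X)^e$.

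For the action of a class $\tau\in\MW(\pi)$ represented by a fiberwise translation $F$, I would use that $F$ preserves every fiber and acts as the identity on the cohomology of each fiber (a translation is homotopic to the identity on a torus, and likewise on the smooth locus of a nodal or cuspidal curve). Hence $F^*$ fixes the sheaves $R^q\pi_*\ZZ$, preserves the Leray filtration on $H^2(X)$, and induces the identity on its graded pieces; therefore $F^*-\mathrm{id}$ carries $H^2(X)$ into $H^2(X)^e$ and annihilates $H^2(X)^e$. Consequently $(F^*-\mathrm{id})(\xi)$ depends only on $\la\xi|e\ra$, which produces a unique $\g_\tau\in H^2(X)^e$ with $F^*(\xi)=\xi+\la\xi|e\ra\,\g_\tau$; the relation $\la\g_\tau|e\ra=0$ is automatic since $F_*e=e$. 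That $\g_\tau$ depends only on $\tau$ is clear (isotopic diffeomorphisms agree on $H^2$), and $\g_{\tau+\tau'}=\g_\tau+\g_{\tau'}$ follows from a one-line computation using $\la\g_\tau|e\ra=0$.

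It remains to show $\tau\mapsto\g_\tau$ is an isomorphism onto $H^2(X)^e$, and this is where I expect the real work to be. Evaluating the formula on $\xi_0:=\PD[\sigma_0]$, which satisfies $\la\xi_0|e\ra=1$, identifies $\g_\tau$ up to sign with $\PD[F\sigma_0]-\PD[\sigma_0]$, so $\tau\mapsto\g_\tau$ is, up to sign, the difference-of-sections homomorphism $\MW(\pi)\to H^2(X)^e$. Since $\MW(\pi)$ acts simply transitively both on $\pi_0$ of the space of sections and on the splittings of the degree-$2$ sequence above (a torsor under $H^2(X)^e$), compatibly with $\g$, it suffices to prove that $\sigma\mapsto\PD[\sigma]$ is a bijection between these two torsors; equivalently, that homologous sections are isotopic through sections and that every class in $\PD[\sigma_0]+H^2(X)^e$ is represented by a section. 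The clean way to get both at once is to verify, by naturality of the Leray filtration and of the exact sequence \eqref{eqn:transl} defining $\MW(\pi)$, that the difference-of-sections map coincides with the canonical spectral-sequence isomorphism $\MW(\pi)=H^1(B, R^1\pi_*\ZZ)\xrightarrow{\sim}H^2(X)^e$ of the second paragraph. Matching these two connecting homomorphisms (and tracking the sign) is the main obstacle; once it is established, $\tau\mapsto\g_\tau$ is an isomorphism of abelian groups, completing the proof.
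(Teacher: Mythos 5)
Your proposal follows the paper's proof quite closely in its overall architecture: existence of a section is obtained from the vanishing of $H^1(B,\Tscr(\pi))\cong H^2(B;R^1\pi_*\ZZ)$ via the torsor/\v{C}ech patching argument (this is Step 2 of the paper's proof, verbatim in substance); the short exact sequence
\[
0\to H^1(B;R^1\pi_*\ZZ)\to H^2(X)\to H^0(B;R^2\pi_*\ZZ)\to 0
\]
is extracted from the degenerating Leray spectral sequence in the same way; and the unipotent form $\tau(\xi)=\xi+\la\xi|e\ra\g_\tau$ comes, as in the paper, from the fact that a fiberwise translation acts trivially on every $E_2^{p,q}$ and hence on the graded pieces of the Leray filtration. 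The one point of genuine divergence is your proof of the key vanishing $H^2(B;R^1\pi_*\ZZ)=0$: you use Poincar\'e--Lefschetz duality, $H^2_c(B\ssm D;L)\cong H_0(B\ssm D,\p B;L)=0$, the latter because a boundary circle already surjects onto the $\pi_1$-coinvariants of $L$; the paper instead retracts $B$ onto an embedded graph containing the critical values, which kills $H^p(B;R^q\pi_*\ZZ)$ for \emph{all} $p\ge 2$ at once. Your duality argument is correct (note $D$ is automatically finite: it is discrete, and closed in the compact $B$ because $\pi$ is proper) and is a perfectly serviceable substitute.

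The problem is the final step, which is where the proposition's actual content lies. You correctly reduce the isomorphism claim to the compatibility statement that the difference-of-sections map $\tau\mapsto\PD[F\sigma_0]-\PD[\sigma_0]$ coincides with the composite of the isomorphism $\MW(\pi)\cong H^1(B;R^1\pi_*\ZZ)$ of Proposition \ref{prop:MWcohomology} (the connecting homomorphism of the sequence \eqref{eqn:transl}) with the Leray inclusion $H^1(B;R^1\pi_*\ZZ)\hookrightarrow H^2(X)$ --- and then you stop, declaring this ``the main obstacle.'' But that compatibility is exactly how the paper concludes (``the definition shows that $\tau\mapsto\g_\tau$ is in fact the isomorphism found above''): both identifications are manufactured from the same sheaf $\Tscr^c(\pi)$ and the same cohomological machinery, so tracing a translation through the two constructions is a routine check, not an open obstacle --- but it is a check, and without it neither injectivity nor surjectivity of $\g$ is established, so your proof is incomplete precisely at the theorem's main assertion. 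Two further cautions: your statement that $\MW(\pi)$ ``acts simply transitively on the splittings'' of the degree-$2$ sequence is circular as phrased, since that simple transitivity is \emph{equivalent} to the bijectivity of $\g$ you are trying to prove (the splittings form an $H^2(X)^e$-torsor on which $\MW(\pi)$ acts through $\g$); the reduction is salvageable by only using that $\pi_0\G(\pi)$ is an $\MW(\pi)$-torsor and that $\sigma\mapsto\PD[\sigma]$ is $\g$-equivariant. And the proposed detour through ``homologous sections are isotopic through sections'' is both unnecessary (it follows from, rather than feeds into, the cohomological identification) and would be harder to prove directly than the connecting-map comparison you deferred.
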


\begin{proposition}[{\bf Existence of sections and action of $\MW(\pi)$ II}]
\label{prop:MWcohomologyII}
Let  $\pi :M\to \PP^1$ be a genus one fibration of arithmetic  genus $d\ge 1$ with integral fibers (so $M\cong M_d$).
Then $\pi$ admits a section and for every  $\tau\in \MW(\pi)$ there exists a unique $c_\tau\in e^\perp/\ZZ e$
(where  $e\in H_2(M)$ is the fiber class)
 such that $\tau$ acts on $H_2(M)$ as the Eichler transformation $E(e \wedge c_\tau)$ given by 
\[
E(e\wedge c_\tau)(x)=x+ (x \cdot e )\hat c_\tau - (x\cdot \hat c_\tau)e -\tfrac{1}{2}(c_\tau\cdot c_\tau)(x\cdot e)e \]
for all $x\in H_2(M)$, 
where $\hat c_\tau\in e ^\perp$ is a lift of $c_\tau$. 
The resulting map 
\[
\tau\in\MW(\pi)\to  c_\tau\in e^\perp/\ZZ e
\]
 is an isomorphism of abelian groups. In particular, $\MW(\pi)$ has  naturally the structure of a lattice (which is even unimodular and of 
 signature $(2d-2, 10d-2)$). 

\end{proposition}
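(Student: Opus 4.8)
The plan is to reduce to the disk case of Proposition~\ref{thm:MWcohomologyI} for the existence of a section, to read off the Eichler form of the action from the fact that fiberwise translations act trivially on $e^\perp/\ZZ e$, and to obtain the isomorphism $\MW(\pi)\cong e^\perp/\ZZ e$ from the cohomological description in Proposition~\ref{prop:MWcohomology} together with the Leray spectral sequence of $\pi$.

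\emph{Existence of a section.} I would excise a small open disk $\mathring D_0\subset \Pb^1$ about a regular value lying off $D_\pi$, so that $B:=\Pb^1\ssm \mathring D_0$ is a compact disk containing all of $D_\pi$ in its interior and $\pi$ restricts to a trivial torus bundle over $\p B\cong S^1$. Proposition~\ref{thm:MWcohomologyI} applies to $\pi|_B$ and yields a section $\sigma_B$. Since $\pi^{-1}D_0\cong D_0\times T^2$, the only obstruction to extending $\sigma_B$ over $D_0$ is the class of $\sigma_B|_{\p B}$ in $\pi_1$ of the fiber; I would remove it by modifying $\sigma_B$ by a fiberwise translation over $B$, the point being that simple connectivity of $M$ forces this class to lie in the image of $\MW(\pi|_B)\to \MW(\p\pi)$ and hence to be killable. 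Having produced a section $\sigma$, the set $\pi_0(\G(\pi))$ becomes an $\MW(\pi)$-torsor.

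\emph{The action is an Eichler transformation.} Because $\Trans(\pi)\subset \Diff^0(M/\Pb^1)$, every representative $F_\tau$ of $\tau\in\MW(\pi)$ preserves each fiber and is isotopically trivial on each smooth fiber, so $F_{\tau*}$ is an orientation-preserving isometry of the unimodular lattice $H_2(M)$ lying in the Eichler subgroup attached to $e$ (the remark after \eqref{eq:sequence2}); that is, it fixes $e$ and acts trivially on $e^\perp/\ZZ e$. By the characterization recalled in \S\ref{subsect:Thurstontype}, such an isometry is a unique Eichler transformation $E(e\wedge c_\tau)$ with $c_\tau\in e^\perp/\ZZ e$. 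Evaluating $E(e\wedge c_\tau)$ on a section class $[\sigma]$ (which satisfies $[\sigma]\cdot e=1$) and comparing with $[F_\tau\circ\sigma]$ identifies $c_\tau$ with the image in $e^\perp/\ZZ e$ of the difference of section classes $[F_\tau\circ\sigma]-[\sigma]\in e^\perp$. The identity $E(e\wedge c)E(e\wedge c')=E(e\wedge(c+c'))$ then makes $\tau\mapsto c_\tau$ a homomorphism $\MW(\pi)\to e^\perp/\ZZ e$.

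\emph{Isomorphism and lattice structure.} Proposition~\ref{prop:MWcohomology} gives $\MW(\pi)\cong H^1(\Pb^1,R^1\pi_*\ZZ)$, which is the term $E_2^{1,1}$ of the Leray spectral sequence $H^p(\Pb^1,R^q\pi_*\ZZ)\Rightarrow H^{p+q}(M)$. As $H^3(\Pb^1,-)=0$, this term survives to $E_\infty^{1,1}$, the graded piece $F^1/F^2$ of $H^2(M)$; here $F^2=\pi^*H^2(\Pb^1)=\ZZ\eps$ and $F^1$ corresponds under Poincar\'e duality to $e^\perp$, so $F^1/F^2\cong e^\perp/\ZZ e$, giving a canonical isomorphism $\MW(\pi)\cong e^\perp/\ZZ e$ of abelian groups. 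I would then check that this isomorphism coincides with the homomorphism $\tau\mapsto c_\tau$, by tracing the edge homomorphism through the difference-of-sections description. This compatibility is the main obstacle, since it requires matching the sheaf-theoretic torsor structure underlying $\Trans(\pi)$ with the topological action on $H_2(M)$; granting it, $\tau\mapsto c_\tau$ is an isomorphism. Finally I transport the intersection form of $e^\perp/\ZZ e$ to $\MW(\pi)$: it is nondegenerate and unimodular because $e$ is a primitive isotropic vector in the unimodular lattice $H_2(M)$; it is even because $c_1$ of the canonical bundle equals $(d-2)\eps$ and is therefore characteristic, whence $x^2\equiv (d-2)(x\cdot e)\equiv 0\pmod 2$ for $x\in e^\perp$; and its signature is $(2d-2,10d-2)$, obtained from the signature $(2d-1,10d-1)$ of $H_2(M)$ by splitting off the hyperbolic plane spanned over $\RR$ by $e$ and a class pairing to $1$ with it.
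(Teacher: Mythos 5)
Your cohomological identification $\MW(\pi)\cong H^1(\PP^1,R^1\pi_*\ZZ)\cong e^\perp/\ZZ e$ and the lattice-theoretic facts at the end are fine, but the section-existence step contains a genuine gap, and it is not a removable one: the implication you assert is false as stated. Write $B=\PP^1\ssm \mathring D_0$, $X=\pi^{-1}B$, fix a trivialization $\p X\cong S^1\times X_b$, and let $L\subset H_1(X_b)$ be the image of $\MW(\pi|_B)\to\MW(\p\pi)\cong H_1(X_b)$; by (the proof of) Proposition \ref{prop:MWpair} this image is the kernel of $H_1(X_b)\to H_1(X)$, i.e.\ the sublattice spanned by the vanishing cycles. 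Your obstruction class $v$ (the fiberwise homotopy class of $\sigma_B|_{\p B}$) can be killed by modifying $\sigma_B$ by a fiberwise translation if and only if $v\in L$. But what simple connectivity of $M$ gives you, via Mayer--Vietoris for $M=X\cup(D_0\times X_b)$, is $H_1(M)\cong H_1(X_b)/(L+\ZZ v)=0$, i.e.\ only that $v$ \emph{generates} the cokernel $H_1(X_b)/L$ --- not that it lies in $L$. (At the lattice level: $L=\ZZ(1,0)+\ZZ(0,2)$ and $v=(0,1)$ satisfy your hypothesis and violate your conclusion; excluding such vanishing-cycle configurations is exactly the nontrivial point.) Indeed, by your own torsor argument, $v\in L$ is \emph{equivalent} to the existence of a section, so the step is circular: the two conditions coincide precisely when $L=H_1(X_b)$, and that vanishing --- equivalently, given simple connectivity, the primitivity of $e$, blanket assumption (3), which you never invoke --- is what has to be proved. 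This is exactly what the paper's Step 4 establishes in the guise $H^2(\PP^1,R^1\pi_*\ZZ)=0$: the differential $d_2\colon H^0(\PP^1,R^2\pi_*\ZZ)\to H^2(\PP^1,R^1\pi_*\ZZ)$ is surjective because $H^3(M)\cong H_1(M)=0$, and it vanishes because $e$ is primitive. If you want to keep your geometric route, use primitivity directly: a class $x\in H_2(M)$ with $x\cdot e=1$ has Mayer--Vietoris boundary $\p x=\pm[S^1]$ in $H_1(\p X)$ (its fiber component dies in $H_1(D_0\times X_b)$), so the horizontal class $s=[S^1]$ dies in $H_1(X)$; since $\sigma_B(B)$ is a disk, $s+\bar v=0$ in $H_1(X)$, whence $\bar v=0$, i.e.\ $v\in L$.

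A second gap is the one you flag yourself: you defer (``granting it'') the identification of $\tau\mapsto c_\tau$ with the edge isomorphism $\MW(\pi)\cong e^\perp/\ZZ e$. This cannot be omitted, because a homomorphism between two abstractly isomorphic finitely generated abelian groups need not be an isomorphism (multiplication by $2$ on $\ZZ$), so without it you get neither injectivity nor surjectivity of $\tau\mapsto c_\tau$. The paper organizes the proof so that this compatibility is nearly built in: its Step 1 shows that fiberwise translations act trivially on every Leray graded piece, so $c_\tau$ is \emph{extracted from} the graded piece $E_\infty^{1,1}=H^1(\PP^1,R^1\pi_*\ZZ)$ --- the very group that Proposition \ref{prop:MWcohomology} identifies with $\MW(\pi)$ via the connecting map of the sequence $0\to R^1\pi_*\ZZ\to\Escr_B\otimes R^1\pi_*\ZZ\to\Tscr^c(\pi)\to 0$ --- and the remaining check is unwinding that connecting map against the difference of section classes. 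In your write-up this verification is still missing, so the isomorphism statement, and with it the heart of the proposition, remains unproved as written.
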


\begin{remark}\label{rem:}
Since  $e^\perp/\ZZ e\cong \Lambda_d(e)$ is an even lattice ,   $\tfrac{1}{2}(c_\tau\cdot c_\tau)\in \ZZ$ and so $E(e\wedge c_\tau)$ preserves $H_2(M)$. 
\end{remark}

Before we get into the proofs, we note that Proposition \ref{prop:MWcohomologyII} implies Theorem \ref{theorem:computeMW}:

\begin{proof}[Proof of Theorem \ref{theorem:computeMW}]
Choose a basis of the lattice $\MW(\pi_d)$ and represent  each basis element by a translation. Since $\Trans(\pi_d)$ is abelian, this 
determines a group homomorphism $\MW(\pi_d)\to\Trans(\pi_d)$ that is a section of  $\Trans(\pi_d)\to \MW(\pi_d)$.
\end{proof}

Since the proofs of these propositions have arguments in common, we prove them almost simultaneously. Therefore, 
in the setting of Proposition \ref{prop:MWcohomologyII}, we write $B$  for $\PP^1$ and $X$ for $M$.

\begin{proof}[Proof of  Propositions  \ref{thm:MWcohomologyI} and \ref{prop:MWcohomologyII}]
We prove the propositions  in a number of steps.
Our assumption that the fibers of $\pi$ are integral implies that 
$R^2\pi_*\ZZ$ is  a trivial local system of rank one on $B$ so that a natural isomorphism  $H^0(B, R^2\pi_*\ZZ)\cong \ZZ 
$ is given by integration over (=natural pairing with) the fiber class, i.e., the map $\xi\in H^2(X)\mapsto \la \xi|e\ra \in \ZZ$.
\\

\emph{Step 1: The Mordell-Weil group $\MW(\pi)$ acts trivially on the grading of $H^2(X)$ with respect to  the Leray filtration defined by $\pi$. }

Consider the Leray spectral sequence
\[
E_2^{p,q}= H^p(B, R^q\pi_*\ZZ)\Rightarrow H^{p+q}(X).
\]  
Any diffeomorphism of $X$ that is a fiberwise-translation acts trivially on each term $E_2^{p,q}$. Hence the same is true for  its action  on $E_\infty^{p,q}$, which is by definition the grading of $H^2(X)$ with respect to  the Leray filtration.  
\\

\emph{Step 2: If $H^{2}(B,R^1\pi_*\ZZ)$ vanishes then $\pi$ admits a section.}

Since $\pi$ has no multiple fibers, we can find  an open cover $\Uscr$ of  $B$ such that  for each $U\in \Uscr$, $\pi_{U}$ admits a section $s_U$. Then $s_{U'}-s_U$ defines a section of $\Trans(\pi)$ over $U\cap U'$. Together they make up a \v{C}ech $1$-cycle relative the covering $\Uscr$ and hence define an element of $\check{H}^1(\Uscr,\Trans(\pi))$. If this a coboundary, then
there exist $\{t_U\in H^0(U, \Trans(\pi))\}_{U\in \Uscr}$ such that $s_{U'}-s_U=t_{U'}-t_U$, which means that the collection $\{s_U+t_U\}_{U\in\Uscr}$ defines a global section. Since we are allowed to pass to a refinement of $\Uscr$, it suffices to show that $\check{H}^1(B,\Trans(\pi))$ is trivial. For paracompact spaces \v{C}ech cohomology is ordinary cohomology, and so this  is  by Proposition \ref{prop:MWcohomology} equal to $H^{2}(B,R^1\pi_*\ZZ)$. 
Hence Step 2 follows.
\\

\emph{Step 3: Proof  of  Proposition \ref{thm:MWcohomologyI}.}

Then $B$ contains an embedded  graph $G\subset B$ as a deformation retract which  contains all the  critical values. This implies that the restriction map  $E_2^{p,q}= H^p(B, R^q\pi_*\ZZ)\to H^p(G, R^q\pi_*\ZZ)$ is an isomorphism  so that $E_2^{p,q}$ is trivial unless 
$p\in \{0,1\}$ and $q\in \{0,1,2\}$.  In particular, $\pi$  admits a section by Step 2. It also follows that  the Leray sequence degenerates on this page, so that  there is an exact sequence
\[
0\to H^1(B, R^1\pi_*\ZZ)\to H^2(X)\to H^0(B, R^2\pi_*\ZZ)\to 0.
\]
The  map $H^2(X)\to H^0(B, R^2\pi_*\ZZ)\cong \ZZ$ is given by integration over the fiber class $e$. It is 
surjective, because it takes the value $1$ on a section (we here regard the class of a section as an element of 
$H_2(X, \p X)\cong H^2(X)$). Proposition \ref{prop:MWcohomology}  identifies  $\MW(\pi)$ with its kernel of 
this map.  Any  $\tau\in \MW(\pi)$ induces a transformation of $H^2(X)$ which preserves the above short 
exact sequence  and acts trivially on both $H^1(B, R^1\pi_*\ZZ)$ and $H^0(B, R^2\pi_*\ZZ)\cong\ZZ$. 
Hence it is of the form $\xi\in H^2(X)\mapsto \xi +\la \xi\, |\, e\ra \g_\tau$ for some 
$\g_\tau\in H^1(B, R^1\pi_*\ZZ)$. So if  $\xi$ is the class  of a section, then its image under $\tau$ is $\xi+\g_\tau$. 
The definition shows that $\tau\mapsto \g_\tau$ is in fact the isomorphism 
$\MW(\pi)\cong H^1(B, R^1\pi_*\ZZ)$ found above.
\\

\emph{Step 4: Proof  of  Proposition \ref{prop:MWcohomologyII}}

We first show that the differential $\ZZ\cong H^0(\PP^1, R^2\pi_*\ZZ)\to H^2(\PP^1, R^1\pi_*\ZZ)$ in the Leray 
spectral sequence is the zero map. Indeed, we already observed that integration over $e$ defines 
an isomorphism $H^0(\PP^1, R^2\pi_*\ZZ)\cong \ZZ$.
Since the Leray spectral sequence degenerates on its third page, the kernel of this differential is  a 
quotient of $H^2(M)$ and it is via this quotient that the integration map 
$\xi\in H^2(M)\mapsto \la\xi | e\ra$ is defined. Since $e$ is primitive, the latter is onto and hence this kernel 
is all of  $H^0(\PP^1, R^2\pi_*\ZZ)$. So the differential $H^0(\PP^1, R^2\pi_*\ZZ)\to H^2(\PP^1, R^1\pi_*\ZZ)$ must be \
the zero map. 

This implies that $H^2(\PP^1, R^1\pi_*\ZZ)$ embeds in $H^3(M)$. But $H^3(M)\cong H_1(M)=0$ and hence $H^2(\PP^1, R^1\pi_*\ZZ)=0$.
So $\pi$ admits a section by Step 2. This in turn implies that the natural map $H^2(\PP^1)\to H^2(M)$ is injective.
Hence the Leray filtration  of  $H^2(M)$ is given by 
\[
H^2(M)=L_2H^2\supset L_1H^2\supset L_0H^2=H^2(\PP^1)\supset L_{-1}H^2=0
\]
with $L_1H^2/L_0H^2\cong H^1(\PP^1, R^1\pi_*\ZZ)$ and $L_2H^2/L_1H^2\cong  H^0(\PP^1, R^2\pi_*\ZZ)$. 
The preceding shows that $L_1H^2$ is the annihilator of $e$ and that $L_0H^2=H^2(\PP^1)$ is spanned by the Poincar\'e dual $\eps $ of $e$. If $\sigma\in H_2(M)$ is the class of  a section, then its Poincar\'e dual and $\eps $ span a unimodular lattice of rank $2$  of signature  $(1,1)$ (even or odd according the parity of $\sigma\cdot\sigma=-d$).  
Since the intersection pairing on $H_2(M)$ is unimodular, its orthogonal complement is also unimodular. This 
orthogonal complement maps isomorphically onto $\eps ^\perp/\ZZ \eps $ and hence the latter  is unimodular. 

Any orthogonal transformation of the unimodular lattice  $H_2(M)$ whose induced transformation on $H^2(M)$ acts trivially on the successive quotients of the above filtration (defined by the isotropic vector $\eps$) is an Eichler transformation as stated in the theorem  with $c$ canonically defined in $\Lambda_d(e)$. In particular, we find for every $\tau\in \MW(\pi)$ an element $c_\tau\in \Lambda_d(e)$. For $\sigma$  as above,  $\tau (\sigma)\equiv \sigma+c_\tau\pmod{\ZZ \eps }$.
Again the  definition shows that $\tau\mapsto c_\tau$ is in fact the isomorphism $\MW(\pi)\cong H^1(\PP^1, R^1\pi_*\ZZ)\cong e^\perp/\ZZ e$. 
\end{proof}

\begin{remark}\label{rem:}
In the K\"ahler setting, the lattice $\eps ^\perp/\ZZ \eps $ has a natural Hodge structure of weight $2$, and a theorem of Shioda \cite{shioda} then implies that the torsion-free quotient of $\MW_\hol(\pi)$ 
can be identified with the $(1,1)$-part of this lattice. In fact, he shows  that in general (where we admit reducible fibers), it is the quotient of $\eps ^\perp$ by  the span of the cohomology classes supported by the singular fibers.
\end{remark}

\begin{corollary}[{\bf Smooth Mordell-Weil group of a rational elliptic fibration}]
\label{cor:}
If $\pi :M\to \PP^1$ is a rational elliptic fibration  with all fibers integral, then 
its  holomorphic Mordell-Weil group $\MW_\hol(M)$ maps isomorphically to its smooth counterpart $\MW(\pi)$. In particular, the latter is a lattice is isometric to $\mathbf{E_8}(-1)$.
\end{corollary}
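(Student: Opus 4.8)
The plan is to realize both $\MW_\hol(\pi)$ and $\MW(\pi)$ as one and the same subgroup of $\Orth(H_2(M))$, compatibly with the natural map $\iota\colon\MW_\hol(\pi)\to\MW(\pi)$ induced by the inclusion $\MW_\hol(\pi)\subset\Trans(\pi)$ of biholomorphic fiberwise translations among smooth ones. Both groups act on $H_2(M)$ --- $\MW_\hol(\pi)$ because its elements are automorphisms of $M$, and $\MW(\pi)$ because the homological action is an isotopy invariant --- and both actions are faithful (for the smooth side this is part of Theorem \ref{theorem:computeMW}, for the holomorphic side it is recalled in the introduction). Moreover $\iota$ intertwines them, since a holomorphic translation $g$ and its class $\iota(g)$ induce the same map $g_*$ on $H_2(M)$. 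Thus it suffices to show that the two images in $\Orth(H_2(M))$ coincide.

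Here is where the rational case enters. A rational elliptic surface has $p_g=h^{2,0}=0$, so $H^2(M;\CC)=H^{1,1}$ and every integral class has Poincar\'e dual of type $(1,1)$; hence $e^\perp_{alg}=e^\perp$. As all fibers are integral we also have $e^\perp_{triv}=\ZZ e$, so the classical (Shioda) description recalled in the introduction yields an isomorphism $\MW_\hol(\pi)\cong e^\perp_{alg}/\ZZ e=e^\perp/\ZZ e$ under which $g\in\MW_\hol(\pi)$ acts on $H_2(M)$ by the Eichler transformation $E(e\wedge c)$ with $c$ its image (this is the action exhibited in Example \ref{example:rationalelliptic}). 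Hence the image of $\MW_\hol(\pi)$ in $\Orth(H_2(M))$ is exactly the Eichler group $\{E(e\wedge c):c\in e^\perp/\ZZ e\}$. On the other hand, Proposition \ref{prop:MWcohomologyII} identifies $\MW(\pi)$ with $e^\perp/\ZZ e$ so that $\tau\in\MW(\pi)$ acts by $E(e\wedge c_\tau)$; its image is therefore the same Eichler group. The two images coincide, so $\iota$ is an isomorphism. The ``in particular'' is then immediate: taking $d=1$ in Theorem \ref{theorem:computeMW} (equivalently, in Proposition \ref{prop:MWcohomologyII}), the lattice $e^\perp/\ZZ e$ is even, unimodular and negative definite of rank $8$, hence isometric to $\Eb_8(-1)$.

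The single point that needs care --- and the main obstacle --- is the compatibility asserted above: that the holomorphic Shioda identification and the smooth cohomological identification of Proposition \ref{prop:MWcohomologyII} are governed by the same Eichler formula on $H_2(M)$. This rests on two ingredients already available: the faithfulness of the parametrization $c\mapsto E(e\wedge c)$ of orthogonal transformations that fix $e$ and act trivially on the Leray graded pieces (used in the proof of Proposition \ref{prop:MWcohomologyII}), and the standard fact that a holomorphic fiberwise translation induces precisely such an Eichler transformation on $H_2(M)$. Granting these, everything above is formal.
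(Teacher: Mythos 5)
Your proposal is correct and follows essentially the same route as the paper: the paper's proof simply cites Example \ref{example:rationalelliptic} (Shioda's identification $\MW_\hol(\pi)\cong e^\perp/\ZZ e$, valid here since $p_g=0$ and all fibers are integral) together with Proposition \ref{prop:MWcohomologyII}, which is exactly your two identifications. Your additional discussion of compatibility --- that both identifications are realized by the same Eichler action on $H_2(M)$, so the \emph{natural} map $\MW_\hol(\pi)\to\MW(\pi)$ (not merely some abstract isomorphism) is the one identified --- is precisely the point the paper leaves implicit, and you supply it correctly.
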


As observed in  Remark \ref{remark:comparison1}, this is no longer true  when $d\ge 2$. 

\begin{proof}
As Example \ref{example:rationalelliptic} shows, the holomorphic Mordell-Weil group is identified with $e ^\perp/\ZZ e$. The corollary then follows from Proposition \ref{prop:MWcohomologyII}.
\end{proof}

With Proposition \ref{prop:MWcohomologyII} in hand, we can now prove the following.

\begin{proof}[Proof of Theorem \ref{theorem:Nielsen1}]  
By Proposition \ref{prop:MWcohomologyII} there is an isomorphism $\MW(\pi_d)\cong \Lambda_d(e)$. Now choose a basis of $\Lambda_d(e)$  
and lift  that basis  to a group of fiberwise translations in $M_d$. Then the group generated by these is abelian and can be considered as  Nielsen realization of $\MW(\pi_d)$ (and of its image in $\Orth(\Lambda_d)$).
\end{proof}

\subsection{Relative Mordell-Weil groups}
\label{subsection:relmw}
We continue with the setting and notation of Proposition \ref{thm:MWcohomologyI}: $\pi:X\to B$ is a genus one fibration over a 
compact, connected surface with nonempty boundary with integral fibers and  no singular fibers over $\p B$. 
We denote $e$ its fiber class (that we consider as an element of $H_2(\p X)$ but identify with its image in $H_2(X)$) and by $H^2(X)^e$ its annihilator in $H^2(X)$ as before.
It is clear that the natural map $H_2(X)\to H_2(X, \p X)\cong H^2(X)$ has its image in $H^2(X)^e$ and factors through $H_2(X)/\ZZ e$.

\begin{proposition}\label{prop:MWpair}
Assume  we are in the setting of Proposition \ref{thm:MWcohomologyI}. Then the sequence 
\[
 H_2(X)/\ZZ e\to H^2(X)^e\to H^2(\p X)^e,
\]
where the first map  is a factor of the map $H_2(X)\to H^2(X)$ defined by the intersection pairing,  is exact and is naturally isomorphic to 
\begin{equation}\label{eqn:MWsequence}
\MW(\pi,\p\pi)\to \MW(\pi)\to \MW(\p\pi).
\end{equation}
In particular, $\MW(\pi,\p\pi)$ is free abelian. 

If in addition $H_1(X)=0$ and $\p B $ is connected (so diffeomorphic to a circle), then $\MW(\pi)\to \MW(\p\pi)$ is onto and  the above sequence is part of the (self-dual) exact sequence 
\[
0\to H^1(X_b)^{\p B}\to H_2(X)/\ZZ e\to H^2(X)^e\to H_1(X_b)_{\p B}\to 0, 
\]
where $b\in \p B$ and $H^1(X_b)^{\p B}$ resp.\  $H_1(X_b)_{\p B}$ stands for the  invariant cohomology  resp.\ coinvariant homology  relative to  the monodromy over $\p B$. \end{proposition}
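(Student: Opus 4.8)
The plan is to route the three Mordell--Weil groups in \eqref{eqn:MWsequence} through the cohomological dictionary of Proposition \ref{prop:MWcohomology} and then match each term against intersection-theoretic data on $X$ by means of Leray spectral sequences together with Lefschetz duality. Proposition \ref{thm:MWcohomologyI} already supplies $\MW(\pi)\cong H^1(B,R^1\pi_*\ZZ)\cong H^2(X)^e$. For the boundary term I would run the Leray spectral sequence of the torus bundle $\p\pi:\p X\to\p B$: since $\p B$ is a disjoint union of circles it has cohomological dimension $1$, the sequence degenerates at $E_2$, and (using that $R^2\pi_*\ZZ$ is the constant sheaf $\ZZ$ with $H^0(\p B,R^2\pi_*\ZZ)\cong\ZZ$ via integration over $e$) one reads off $\MW(\p\pi)\cong H^1(\p B,R^1\pi_*\ZZ)\cong H^2(\p X)^e$, the second map of \eqref{eqn:MWsequence} becoming the restriction $H^2(X)^e\to H^2(\p X)^e$.

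The crux is the relative term. I would apply the relative Leray spectral sequence $E_2^{p,q}=H^p(B,\p B;R^q\pi_*\ZZ)\Rightarrow H^{p+q}(X,\p X)$, and the point that distinguishes this from the absolute computation of Proposition \ref{thm:MWcohomologyI} is that $B$ no longer retracts rel $\p B$ onto a graph: the group $E_2^{2,0}=H^2(B,\p B;\ZZ)\cong\ZZ$ is now nonzero. Because $\mathring B$ is connected and noncompact, $E_2^{0,1}=H^0_c(\mathring B;R^1\pi_*\ZZ)=0$ and $E_2^{0,2}=H^0_c(\mathring B;\ZZ)=0$, so no differential meets $E^{2,0}$ or $E^{1,1}$ and the sequence degenerates in total degree $2$. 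This gives $0\to E_\infty^{2,0}\to H^2(X,\p X)\to E_\infty^{1,1}\to 0$, i.e.\ $0\to\ZZ\to H^2(X,\p X)\to\MW(\pi,\p\pi)\to 0$. The generator of $E_\infty^{2,0}=\pi^*H^2(B,\p B)$ is the pullback of the orientation class of $(B,\p B)$, which is Lefschetz dual to a fiber $\pi^{-1}(\mathrm{pt})$; hence under $H^2(X,\p X)\cong H_2(X)$ this $\ZZ$ is exactly $\ZZ e$, yielding $\MW(\pi,\p\pi)\cong H_2(X)/\ZZ e$. The first map of \eqref{eqn:MWsequence} is induced by $H^2(X,\p X)\to H^2(X)$, which under Lefschetz duality is the intersection-form map $H_2(X)\to H^2(X)$; it kills $e$ (as $e$ comes from $H^2(B,\p B)\to H^2(B)=0$) and lands in $H^2(X)^e$, as asserted.

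With the three identifications and the two maps in place, exactness of \eqref{eqn:MWsequence}, and hence of the topological sequence, is the segment $H^1(B,\p B;R^1\pi_*\ZZ)\to H^1(B;R^1\pi_*\ZZ)\to H^1(\p B;R^1\pi_*\ZZ)$ of the long exact cohomology sequence of the pair $(B,\p B)$ (Remark \ref{rem:sheafconvention}); the only real labor is to check, via a ladder comparing the absolute, boundary and relative Leray spectral sequences, that the edge isomorphisms intertwine the connecting and restriction maps with the topological ones. For freeness of $\MW(\pi,\p\pi)$ I would use $\MW(\pi,\p\pi)\cong H^1_c(\mathring B;R^1\pi_*\ZZ)$ and Poincar\'e duality on the open oriented surface $\mathring B$ to rewrite it as $H_1(\mathring B;R^1\pi_*\ZZ)$; as the first homology (with torsion-free coefficients) of a space homotopy equivalent to a graph, this is the kernel of a boundary map between free abelian groups, hence free.

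Finally, for the refined statement I would specialize to $\p B$ connected and $H_1(X)=0$. Writing out the full long exact sequence of $(B,\p B)$ with coefficients $R^1\pi_*\ZZ$ and identifying $H^0(\p B;R^1\pi_*\ZZ)=H^1(X_b)^{\p B}$ (monodromy invariants) and $H^1(\p B;R^1\pi_*\ZZ)\cong H_1(X_b)_{\p B}$ (coinvariants, via Poincar\'e duality on the fiber), the sequence reads
\[
0\to H^0(B;R^1\pi_*\ZZ)\to H^1(X_b)^{\p B}\to H_2(X)/\ZZ e\to H^2(X)^e\to H_1(X_b)_{\p B}\to H^2(B,\p B;R^1\pi_*\ZZ)\to 0.
\]
It then remains to kill the two outer correction terms using $H_1(X)=0$: the term $H^0(B;R^1\pi_*\ZZ)=E_\infty^{0,1}$ is a quotient of $H^1(X)=0$, while $H^2(B,\p B;R^1\pi_*\ZZ)=E_\infty^{2,1}$ injects into $H^3(X,\p X)\cong H_1(X)=0$ (one checks as above that no differentials disturb these two entries). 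This yields the asserted four-term exact sequence, in particular the surjectivity of $\MW(\pi)\to\MW(\p\pi)$; self-duality is then immediate, since under $H^2(X)^e\cong(H_2(X)/\ZZ e)^\vee$ the central map is the symmetric intersection form and the end terms $H^1(X_b)^{\p B}$ and $H_1(X_b)_{\p B}$ are Poincar\'e dual as invariants versus coinvariants of the boundary monodromy. The step I expect to be the main obstacle is the relative spectral-sequence bookkeeping in the second paragraph: pinning down the extra summand $\ZZ=E_\infty^{2,0}$ and verifying that Lefschetz duality carries it exactly onto $\ZZ e$, so that one obtains $H_2(X)/\ZZ e$ rather than $H_2(X)$.
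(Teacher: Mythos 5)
Your treatment of the first half follows the paper's own route almost verbatim: the paper likewise feeds Proposition \ref{prop:MWcohomology} into the relative Leray spectral sequence $H^p(B,\p B;R^q\pi_*\ZZ)\Rightarrow H^{p+q}(X,\p X)\cong H_{4-p-q}(X)$, notes that the $p=0$ column dies because all fibers are integral, and reads off the extension $0\to \ZZ e\to H_2(X)\to \MW(\pi,\p\pi)\to 0$ with the bottom $\ZZ=H^2(B,\p B;\ZZ)$ going to the fiber class under duality, exactly as you do. For the refined four-term sequence, however, your route genuinely differs. The paper leaves the base: it takes the homology exact sequence of the pair $(X,\p X)$, uses $H_1(X)=0$ and Poincar\'e--Lefschetz duality to truncate it to the self-dual sequence $0\to H_2(\p X)\to H_2(X)\to H^2(X)\to H^2(\p X)\to 0$, passes to the quotient by $\ZZ e$ and the annihilator of $e$, and identifies the outer terms via the Wang sequence of $\p X\to \p B$. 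You instead stay inside sheaf cohomology of $(B,\p B)$: you extend the long exact sequence one step in each direction and kill the two new terms, $H^0(B;R^1\pi_*\ZZ)$ and $H^2(B,\p B;R^1\pi_*\ZZ)$, by locating them as Leray graded pieces of $H^1(X)=0$ and of $H^3(X,\p X)\cong H_1(X)=0$ (you tacitly also use $H^2(B;R^1\pi_*\ZZ)=0$, which holds because $B$ retracts onto a graph containing $D_\pi$). Both arguments are sound; the paper's version makes the self-duality of the resulting sequence manifest, since it is inherited from duality for the pair $(X,\p X)$, whereas yours is more uniform --- one framework throughout --- and yields the surjectivity of $\MW(\pi)\to\MW(\p\pi)$ for free, at the price of having to argue self-duality separately (your closing remark that invariants and coinvariants of the boundary monodromy are dual deserves an actual line of proof).

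The one step that fails as written is your freeness argument for $\MW(\pi,\p\pi)$. You invoke Poincar\'e duality on $\mathring B$ with coefficients in $R^1\pi_*\ZZ$, but this sheaf is only constructible, not locally constant: its stalks jump at the points of $D_\pi$ (it is the direct image $j_*$ of the local system on $\mathring B\ssm D_\pi$). Classical Poincar\'e duality with local coefficients therefore does not apply, and Verdier duality would replace $R^1\pi_*\ZZ$ by a dual complex rather than produce the group $H_1(\mathring B;R^1\pi_*\ZZ)$ you want; likewise ``homology of a graph with constructible coefficients'' is not the chain-level statement you appeal to. Fortunately you do not need any of this: freeness follows from the identification you have already established, $\MW(\pi,\p\pi)\cong H_2(X)/\ZZ e$ (this is also how the paper reads it off, leaving the verification implicit). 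Indeed, $H_2(X)$ is torsion-free: by the universal coefficient theorem its torsion agrees with that of $H^3(X)$, and the absolute Leray spectral sequence exhibits $H^3(X)$ as a subgroup of the free group $H^1(B;R^2\pi_*\ZZ)$, since the entries $E_2^{2,1}=H^2(B;R^1\pi_*\ZZ)$ and $E_2^{3,0}=H^3(B;\ZZ)$ vanish. Moreover $e$ is primitive, because it pairs to $1$ with the class of a section, which exists by Proposition \ref{thm:MWcohomologyI}. Hence $\ZZ e$ is saturated in the torsion-free group $H_2(X)$, and the quotient $H_2(X)/\ZZ e\cong\MW(\pi,\p\pi)$ is free abelian.
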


\begin{proof}
From Proposition \ref{prop:MWcohomology} we learn that the sequence \eqref{eqn:MWsequence} can be identified with a sequence
\[
H^1(B,\p B; R^1\pi_*\ZZ)\to H^1(B; R^1\pi_*\ZZ)\to H^1(\p B; R^1\pi_*\ZZ)
\]
Each of these terms  appears in    Leray spectral sequences. For example, 
 \[
 E^{p,q}_2=H^p(B,\p B; R^q\pi_*\ZZ)\Rightarrow H^{p+q}(X,\p X)\cong H_{4-p-q}(X),
 \]
where the last isomorphism is Alexander duality.  Since all  fibers are integral, 
 this sequence degenerates, for the term $E^{p,q}_2$ vanishes unless $p\in\{1,2\}$ and $q\in\{0,1,2\}$ (a reducible fiber would
 give nonzero elements of $H^0_c (\mathring{B}, R^2\pi_*\ZZ)$). The isomorphism  $H^2(B, \p B; \pi_*\ZZ)=H^2(B, \p B)\cong H_0(B)\cong \ZZ$, then yields a short exact sequence
 \[
 0\to \ZZ\to H_2(X)\to \MW(\pi, \p\pi)\to 0.
 \]
 with $1\in \ZZ$ mapping to the fiber class  $e\in H_2(X)$. It follows that  $\MW(\pi, \p\pi)\cong H_2(X)/\ZZ e$.
 A similar argument identifies $H^1(B; R^1\pi_*\ZZ)$ with $H^2(X)^e$ and $H^1(\p B; R^1\pi_*\ZZ)$ with $H^2(\p X)^e$.

Now assume $H_1(X)=0$ and $\p B $ is connected. Then 
$H_3(X, \p X)=0\cong  H^1(X)=0$. The exact homology sequence for the pair $(X,\p X)$ plus 
 Alexander/Poincar\'e duality yields the exact sequence 
 \[
0\to H_2(\p X)\to H_2(X)\to H^2(X)\to H^2(\p X)\to 0.
\]
This sequence is self-dual and induces the exact sequence 
 \[
0\to H^2(\p X)/\ZZ e\to H_2(X)/\ZZ e\to H^2(X)^e\to H^2(\p X)^e\to 0.
\]
The Wang sequence for a fiber bundle over a circle  identifies 
$H^2(\p X)/\ZZ e$ with $H^1(X_b)^{\p B}$ and $H^2(\p X)^e$ with $H_1(X_b)_{\p B}$.
\end{proof} 

\begin{remark}\label{rem:recipe}
If we are in the special case of  Proposition \ref{prop:MWpair} (where $H_1(X)=0$ and $\p B$ connected), then 
that proposition shows that $\MW(\p \pi)\cong H_1(X_b)_{\p B}$ and that there is an extension of groups
\[
1\to H^1(X_b)^{\p B}\to \MW(\pi, \p\pi)\to \im (H_2(X)\to H^2(X))\to 1.
\]
We shall make the isomorphism  geometrically explicit in Lemma \ref{lemma:MWcircle}.  For now we explain how the embedding of   $H^1(X_b)^{\p B}$ in $\MW(\pi, \p\pi)$  is geometrically defined.

Let $U$ be a collar neighborhood of $\p B$ that does not contain any critical value  and identify it with $[0,1)\times \p B$. Choose  
a retraction $r: X_U\to X_{\p B}$ such that $(\pi_U, r): X_U\to U\times X_{\p B}$ is a diffeomorphism and let $\varphi: [0,1]\to \RR$ be a smooth function that is $0$ near $0$ and $1$ near $1$. We think of  $a\in H_1(X_b)^{\p B}$ as 
a locally constant family of classes $\{a_s\in H_1(X_s)\}_{s\in \p B}$ with $a=a_b$. Then a fiberwise translation for $\pi$  with support in $U$ is defined by specifying its restriction over $U$ to be
\[
(t,x)\in [0,1)\times X_s\mapsto (t, x+\varphi(t)a_s)\in [0,1)\times  X_s, \text{ where   $s\in \p B$.}
\]
in terms of the above  identifications.
This defines an element of $\MW(\pi, \p\pi)$ and up to a sign (which we did not bother to determine) this equals the image of $a$. 
\end{remark}

\subsection{Torus fibrations over a circle} 
We found in Proposition \ref{prop:MWpair} that if $\p B$  is connected (and hence a copy of the circle) and $b\in \p B$,  then $\MW(\p \pi)$ is the group of co-invariants of the monodromy action on $H_1(X_b)$.  As promised, we shall make this a bit more explicit (and geometric). Let us state and prove  this in the appropriate generality, as there is no extra cost in doing so.

\begin{lemma}\label{lemma:MWcircle}
Let $f: Y\to T$ be a locally trivial  fiber bundle over the unit circle  $T\subset \CC$ whose fibers are torsors of  affine $n$-tori (so that its structural group is $\GL(n, \ZZ)\ltimes \RR^n/\ZZ^n$). Let $A\in\GL(H_1(Y_1))\cong \GL(n, \ZZ)$ be the monodromy of this bundle. Then its Mordell-Weil group $\MW (f)$ is naturally isomorphic to the group $H_1(Y_1)_A$ of monodromy  co-invariants and this group acts simply transitively on $\pi_0(\G(f))$. 
\end{lemma}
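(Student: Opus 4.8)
The plan is to realize $f$ as a mapping torus and reduce both assertions to an explicit analysis of paths in the translation torus of a single fiber. Fix the basepoint $1\in T$, write $F:=Y_1$, and set $V:=H_1(F;\RR)$, $\Lambda:=H_1(F;\ZZ)$, so that $\Trans(F)=V/\Lambda$ and the monodromy $A\in\GL(\Lambda)$ acts on $\Trans(F)$ through its linear action on $V$. Cutting $T$ at $1$ trivializes $f$ over $[0,1]$, with clutching given by an affine automorphism $\phi$ of $F$ whose linear part is $A$; thus $Y\cong\big([0,1]\times F\big)/\big((1,x)\sim(0,\phi(x))\big)$. Under this description a global fiberwise translation is exactly a (smooth) path $\tau\colon[0,1]\to V/\Lambda$ that descends across the gluing, and the descent condition is precisely $\tau(0)=A\,\tau(1)$. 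So I would identify $\Trans(f)$ with the path space $P_A:=\{\tau\in C([0,1],V/\Lambda):\tau(0)=A\tau(1)\}$, smooth versus continuous being irrelevant for $\pi_0$.

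The heart of the argument is the computation of $\pi_0(P_A)$. I would pass to the universal cover $p\colon V\to V/\Lambda$ and set $\tilde P:=\{\tilde\tau\in C([0,1],V):\tilde\tau(0)-A\tilde\tau(1)\in\Lambda\}$; post-composition with $p$ exhibits $P_A=\tilde P/\Lambda$ as the quotient by the free, properly discontinuous action $\tilde\tau\mapsto\tilde\tau+\lambda$ ($\lambda\in\Lambda$). The ``defect'' $\tilde\tau\mapsto\ell:=\tilde\tau(0)-A\tilde\tau(1)$ is a continuous $\Lambda$-valued, hence locally constant, function on $\tilde P$, and each level set $\tilde P_\ell$ is a nonempty coset of a linear subspace of the topological vector space $C([0,1],V)$, so it is contractible; therefore $\pi_0(\tilde P)=\Lambda$. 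Since the deck action sends $\tilde P_\ell$ to $\tilde P_{\ell+(I-A)\lambda}$, passing to the quotient yields
\[
\MW(f)=\pi_0(\Trans(f))=\pi_0(P_A)=\Lambda/(I-A)\Lambda=H_1(F)_A,
\]
where $A\Lambda=\Lambda$ gives $(I-A)\Lambda=(A-I)\Lambda$. The isomorphism is natural, as the class $\ell\bmod(A-I)\Lambda$ is manufactured directly from $f$ and the basepoint. The same count drops out of the homotopy exact sequence of the evaluation fibration $\mathrm{ev}_1\colon\Trans(f)\to V/\Lambda$, $\tau\mapsto\tau(1)$, whose fiber over $w$ is the space of paths from $Aw$ to $w$ and whose boundary map $\pi_1(V/\Lambda)=\Lambda\to\pi_0(\mathrm{fiber})=\Lambda$ is $\lambda\mapsto(I-A)\lambda$; I may include this as a cross-check.

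For the torsor statement I would first note that $\G(f)\neq\varnothing$: any $x_0\in F$ can be joined to $\phi(x_0)$ by a path in the connected fiber $F$, and such a path is exactly a section. Because every fiber $Y_t$ is a torsor under its translation torus and the difference map varies continuously with $t$, any two sections differ fiberwise by a unique translation in $\Trans(f)$; hence for any base section $s_0$ the orbit map $\Trans(f)\to\G(f)$, $\tau\mapsto\tau\cdot s_0$, is an equivariant homeomorphism, making $\G(f)$ a topological torsor under $\Trans(f)$. Applying $\pi_0$ then identifies $\pi_0(\G(f))$ with $\pi_0(\Trans(f))=\MW(f)$ as $\MW(f)$-sets, which is exactly the assertion that $\MW(f)$ acts simply transitively on $\pi_0(\G(f))$.

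The main obstacle, and the step requiring the most care, is the middle one: pinning down the boundary condition $\tau(0)=A\tau(1)$ with the correct placement of $A$, and then justifying that $\pi_0$ of the quotient $\tilde P/\Lambda$ equals $\pi_0(\tilde P)$ modulo the deck action — legitimate precisely because that action is free and properly discontinuous. Everything else is formal once one knows the fiber is connected and each fiber is a torus torsor.
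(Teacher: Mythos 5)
Your proof is correct, and it shares its core with the paper's argument: both realize $f$ as the mapping torus of an automorphism with linear part $A$ acting on the torus $V/\Lambda = H_1(Y_1;\RR)/H_1(Y_1;\ZZ)$, and both reduce the statement to the lattice computation $\Lambda/(I-A)\Lambda\cong H_1(Y_1)_A$. The executions differ in a way worth recording. The paper argues on the \emph{section} side: it first uses connectedness of the fiber to produce a section, identifies $\MW(f)$ with $\pi_0(\G(f))$ via the torsor structure, and then writes down explicit linear sections $\sigma_v$ of the mapping torus indexed by $v\in\Lambda$, asserting --- with the verification explicitly left to the reader --- that $v\mapsto[\sigma_v]$ is onto with kernel $(A-I)\Lambda$. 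You argue on the \emph{translation} side: cutting the circle identifies $\Trans(f)$ with the twisted path space $P_A=\{\tau\colon[0,1]\to V/\Lambda:\tau(0)=A\tau(1)\}$, and your covering-space argument --- lifting to $V$, observing that the defect $\tilde\tau\mapsto\tilde\tau(0)-A\tilde\tau(1)$ is locally constant with contractible (convex, nonempty) level sets, and that the deck action shifts the defect by $(I-A)\Lambda$ --- computes $\pi_0(P_A)=\Lambda/(I-A)\Lambda$ outright, after which the torsor statement follows formally from the equivariant homeomorphism $\Trans(f)\cong\G(f)$. So the two proofs also run the logical order of the two assertions in opposite directions. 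In effect, your defect computation is precisely the step the paper omits, so your write-up is the more complete one; the paper's version buys brevity and explicit section representatives. The one step in your version requiring genuine care (which you correctly flag and correctly resolve) is the placement of $A$ in the gluing condition $\tau(0)=A\tau(1)$, which comes from conjugating a fiber translation by the affine clutching map.
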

\begin{proof}
Since the fiber $Y_1$ is connected, $f$ will certainly admit a section. Let $\sigma_0$ be one such. To give a fiberwise translation is then to give a section and vice versa and so this identifies $\MW (f)$ with the connected component group of the space of sections of $f$.

The section $\sigma_0$ identifies
$Y$ with the bundle whose fiber over $s\in T$ is $H_1(Y_s;\RR)/H_1(Y_s;\ZZ)$  (we may regard this as the Jacobian bundle)
and  we  can reconstruct $f$ from $A$ by taking the product $[0,2\pi]\times H_1(Y_1;\RR)$ and  dividing out by $H_1(Y_1;\ZZ)$ acting on $H_1(Y_1;\RR)$ by translations and identifying  $(0, x)\in\RR\times H_1(Y_1;\RR)$ with  $(2\pi, Ax)$ so that the orbit space  then projects onto $\RR/2\pi\ZZ\cong T$.  Any $v\in H_1(Y_1;\ZZ)$ determines a section $\sigma_v$ of $f$ by assigning to $e^{\sqrt{-1}t}$ the image of 
$(t, (2\pi)^{-1}t v)\in \RR\times H_1(Y_1;\RR)$  in $Y$. We leave it to the reader to check that $v\in  H_1(Y_1;\ZZ) \mapsto [\sigma_v]\in \MW(f)$ is onto and that $[\sigma_v]=0$ if and only if $v=A(v')-v'$ for some $v'\in H_1(Y_1;\ZZ)$. 
\end{proof}

\subsection{Mordell-Weil groups of fiberwise connected sums}\label{subsection:fibsum}
In this section we describe a connected sum construction for genus one fibered 4-manifolds  and determine its effect on Mordell-Weil groups. We shall illustrate this by the glueing of two rational elliptic fibrations producing an elliptic K3 manifold.

Assume that we are in the setting of  Proposition \ref{prop:MWcohomologyII}, where we have a genus one fibration  $\pi: X\to B$ with $B$ a copy of $\PP^1$ and $X$ simply connected (a copy of some $M_d$). Let $b\in B$ be a regular value of $\pi$. The \emph{real oriented blowup} of $b\in B$ produces a surface with boundary $\hat B_b$ diffeomorphic with a closed disk and a smooth map $f: \hat B_b\to \PP^1$ that is a diffeomorphism over $B\ssm \{b\}$ and where 
$\p\hat B_b:=f^{-1}b$  is the circle of rays in the tangent plane $T_b B$ (to be thought of as a boundary of $B\ssm\{b\}$). It is clear that the pull-back $\hat\pi: \hat X:=f^*X\to \hat  B_b$ of $\pi$ is a genus one fibration whose restriction to $f^{-1}b$ is the projection $\p\hat  B_b\times X_b\to \p\hat B$.

Suppose we are given another such pair 
$(\pi': X'\to B', b'\in B')$. Then the choice of  an affine isomorphism $u: X_b\cong X'_{b'}$ and an orientation \emph{reversing} isomorphism $v:T_bB\cong T_{b'}B'$ allows us to glue the two fibrations  
$\hat X\to \hat B$ and  $\hat X'\to \hat B'$ to produce a genus one fibration that we might denote by 
\[
(\pi,b)\#_{(u,v)} (\pi',b'): (X,b)\#_{(u,v)}(X',b')\to (B,b)\#_v (B',b'), 
\]
were we to insist on a notation that  expresses all the dependencies.
We will however simply write $\pi'': X''\to B''$.
Since the possible choices for $b$, $b'$ and $v$ vary in a connected family, the diffeomorphism type of $\pi''$ only depends on  the isotopy class of $u$. This connected sum is of the same type as its terms: $ B''$ is a copy of $\PP^1$ and $X''$ is simply connected. The discriminant of $\pi''$  appears here as the disjoint union of discriminants of $\pi$ and $\pi'$. It follows that the
arithmetic genus of $X''$ is the sum of the arithmetic genera of $X$ and $X'$. This construction, to which we shall refer  as a \emph{fiber connected sum}, was essentially introduced by Moishezon \cite{M}. 
The base $B''$ contains copies $\hat B$ and  $\hat B'$ over which this bundle reproduces  resp.\  $\hat\pi$ and  $\hat\pi'$ and these copies intersect in a copy $S$ of the circle that we shall identify with $\p\hat B$. 
The following theorem tells us how   $\MW(\pi'')$ is related to $\MW(\pi)$ and $\MW(\pi')$. 

\begin{theorem}\label{thm:fiberconnectedsum}
The  restriction of $\pi''$ to $S$  determines a  primitive  isotropic sublattice $I\subset \MW(\pi'')$ of rank $2$. We then have natural isomorphisms:
\[
I\cong H^1(X_b), \quad I^\perp/I\cong \MW(\pi)\oplus \MW(\pi'), \quad  \MW(\pi'')/I^\perp\cong I^\vee\cong H_1(X_b).
\]
with the middle isomorphism being an isomorphism of lattices.
\end{theorem}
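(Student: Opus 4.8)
The plan is to compute everything through the cohomological characterization of Mordell--Weil groups established in Proposition~\ref{prop:MWcohomology}, together with the long exact sequences that describe how the base $B''$ is assembled from $\hat B$, $\hat B'$, and the gluing circle $S$. Since $\pi''$ has only integral fibers, we have $\MW(\pi'')\cong H^1(B'', R^1\pi''_*\ZZ)$, and similarly for the relative versions. The circle $S=\p\hat B$ separates $B''$ into the two closed pieces $\hat B$ and $\hat B'$, so I would exploit the Mayer--Vietoris (or the pair $(B'', \hat B')$) long exact sequence in the sheaf $R^1\pi''_*\ZZ$. The key geometric input is that, by construction, the restriction of $R^1\pi''_*\ZZ$ to each of $\hat B$ and $\hat B'$ agrees with $R^1\pi_*\ZZ$ and $R^1\pi'_*\ZZ$ respectively, while over $S$ it is the constant sheaf $H^1(X_b)$ (the monodromy over $S$ is trivial, being the identity gluing). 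First I would identify the sublattice $I$: the restriction map $\MW(\pi'')\to\MW(\pi''|_S)$ lands in the contribution of the circle, and by Lemma~\ref{lemma:MWcircle} with trivial monodromy $A=\mathrm{id}$ we get $\MW(\pi''|_S)\cong H_1(X_b)_{\mathrm{id}}=H_1(X_b)$. I would define $I$ to be the image of the translations supported in a collar of $S$, which by Remark~\ref{rem:recipe} is $H^1(X_b)^{S}=H^1(X_b)$ (full invariant cohomology, since monodromy is trivial).

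\textbf{The three isomorphisms.} For $I\cong H^1(X_b)$: this is the image of $\MW(\hat\pi''|_{\text{collar}}, \p)$ coming from the collar construction of Remark~\ref{rem:recipe}, which for trivial monodromy produces exactly $H^1(X_b)$, a rank-$2$ free abelian group; primitivity and isotropy I address below. For $I^\perp/I\cong\MW(\pi)\oplus\MW(\pi')$: here I would use that cutting $B''$ along $S$ and applying the relative cohomology sequences gives, via Proposition~\ref{prop:MWpair} applied to each half, identifications $H^1(\hat B, \p\hat B; R^1\pi_*\ZZ)\cong\MW(\pi,\p\pi)$, and then assemble. More precisely, excising to the two disks yields a decomposition in which the ``interior'' part of $\MW(\pi'')$ modulo the boundary contribution $I$ splits as the direct sum of the two absolute Mordell--Weil groups $\MW(\pi)$ and $\MW(\pi')$ of the capped-off fibrations; the capping is precisely the passage from $\hat B_b\to\PP^1$ back to the closed base, which recovers $\MW(\pi)$ and $\MW(\pi')$ as in Proposition~\ref{prop:MWcohomologyII}. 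For $\MW(\pi'')/I^\perp\cong H_1(X_b)$: this is the cokernel side of the self-dual four-term sequence of Proposition~\ref{prop:MWpair}, and I would get it from Lemma~\ref{lemma:MWcircle} identifying $\MW(\pi''|_S)\cong H_1(X_b)$ together with the surjectivity statement there.

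\textbf{The lattice structure and duality.} The most delicate point, and the one I expect to be the main obstacle, is showing that the middle identification $I^\perp/I\cong\MW(\pi)\oplus\MW(\pi')$ is an \emph{isometry} of lattices and that $I$ is isotropic with $\MW(\pi'')/I^\perp\cong I^\vee$. Under the Eichler description of Proposition~\ref{prop:MWcohomologyII}, the lattice form on $\MW(\pi'')$ is the intersection form on $e^\perp/\ZZ e\subset H_2(X'')$, and I must track how the $H_2$-intersection pairing decomposes under the fiber connected sum. The point is that the pairing between a class supported in $\hat X$ and one supported in $\hat X'$ vanishes (disjoint supports away from the neck), which gives the orthogonal direct sum on $I^\perp/I$; and the self-duality $\MW(\pi'')/I^\perp\cong I^\vee$ is exactly the statement that the unimodular form on $\MW(\pi'')\cong e^\perp/\ZZ e$ restricts to a unimodular perfect pairing between the two boundary-type quotients $I^\vee\cong H_1(X_b)$ and $I\cong H^1(X_b)$, i.e.\ Poincar\'e duality on the torus fiber. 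I would establish isotropy of $I$ by noting the collar-supported translations can be realized with pairwise-disjoint (or nested) supports so their mutual intersection numbers vanish, and primitivity by the Mayer--Vietoris sequence showing $I$ is the full image of the boundary map, hence a direct summand. Making the identification of $I^\vee$ with $H_1(X_b)$ \emph{canonical} (compatibly with the unimodular form on $\MW(\pi'')$) will require carefully matching the duality on $\MW(\pi'')$ with Poincar\'e--Lefschetz duality on the fiber $X_b$, which is where I would spend the bulk of the careful bookkeeping.
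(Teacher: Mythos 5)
Your proposal follows essentially the same route as the paper's proof: both compute everything through Proposition \ref{prop:MWcohomology} and the long exact sequence of the pair $(B'',S)$ (equivalently Mayer--Vietoris) with coefficients in $R^1\pi''_*\ZZ$, identify $I$ as the collar-supported image of $H^1(X_b)$ (the paper's $\delta$-image, made geometric in Remark \ref{rem:recipe}), prove isotropy by the same disjoint-support/push-off reasoning (the paper phrases it as the classes factoring through $H_2(X''_S)$), and deduce $\MW(\pi'')/I^\perp\cong I^\vee$ from unimodularity and primitivity. The bookkeeping you defer at the end is exactly the ``little diagram chase'' the paper itself defers, so the proposal is correct and not materially different.
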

\begin{proof}
The cohomology exact sequence of the sheaf $R^1\pi_*\ZZ$ for the pair $(B,\{b\})$ gives the short exact sequence
\[
0\to H^1(X_b)\xrightarrow{\delta}  H^1(B, \{b\}; R^1\pi_*\ZZ)\to H^1(B, R^1\pi_*\ZZ)\to 0 
\]
and also shows that $H^0(B, \{b\}; R^1\pi_*\ZZ)\cong H^0(B, R^1\pi_*\ZZ)=0$.  By Proposition \ref{prop:MWcohomology}, we can identify this with an exact sequence
\begin{equation}\label{eqn:deltasequence}
0\to H^1(X_b)\to  \MW (\pi, \{b\})\to \MW(\pi)\to 0 
\end{equation}
There is a similar exact sequence for $\pi'$. 
Next we consider a piece of  cohomology exact sequence of the sheaf $R^1\pi''_*\ZZ$ for the  pair $(B'', S)$:
\begin{multline}\label{eqn:MVforMW}
H^0(B'',S; R^1\pi''_*\ZZ)\to H^{0}(S; R^1\pi''_{S*}\ZZ)\xrightarrow{\delta''} H^1(B'',S; R^1\pi''_*\ZZ)\to\\
\to H^1(B''; R^1\pi''_*\ZZ)\to  H^1(S; R^1\pi''_*\ZZ)\to\cdots
\end{multline}
The bundle $\pi''_S$ is trivial, so $H^{0}(S; R^1\pi''_{S*}\ZZ)\cong H^1(X_b)$ and $H^1(S; R^1\pi''_{S*}\ZZ)\cong H^1(S)\otimes H^1(X_b)$. 
The group $H^q(B'',S; R^1\pi''_*\ZZ)$ decomposes as $H^q(B,\{b\}; R^q\pi_*\ZZ)\oplus 
H^q(B',\{b'\}'; R^1\pi'_*\ZZ)$.  So for $q=0$ this group is trivial  and for $q=1$,   $\delta''$ becomes  $(\delta, -\delta')$, where $\delta$ resp.\ $\delta'$ is  the coboundary 
in the short exact sequence for the pair $(B,\{b\})$ resp.\  $(B',b)$.   If we make these substitutions  in the sequence \eqref{eqn:MVforMW} and if we apply Proposition \ref{prop:MWcohomology}, then find the exact sequence
\[
0\to H^1(X_b)\xrightarrow{(\delta, -\delta')} \MW(\pi, \p\pi)\oplus \MW(\pi', \p\pi')\to \MW(\pi'') \to H^1(S)\otimes H^1(X_b)\to\cdots  
\]
The exact sequence \eqref{eqn:deltasequence} shows  that the   first  component of $(\delta, \delta')$ embeds  
$H^1(X_b)$ in  the summand $\MW(\pi, \p\pi)$ with cokernel  $\MW(\pi)$.
Since this image has trivial intersection with the image of  $(\delta, -\delta')$, it will embed $H^1(X_b)$ in $\MW(\pi'')$. The image of this embedding, which  will  be our  $I$,   
is isotropic for geometric reasons: if we identify  $\MW(\pi'')$ with a subquotient of $H^2(X'')$, then the image of $H^1(X_b)$ is represented by the image of the composite 
\[H^1(X_b)\cong H_1(X_b)\cong H_1(X_b)\otimes H_1(S)\xrightarrow{\times} H_2(X''_S)\to H_2(X'')\cong H^2(X''),\]
which also lies in the perp of the fiber class.  It is also clear that if we exchange the role of $\delta$ and $\delta'$, we get the same image.
The theorem now follows with a little diagram chase.
\end{proof}

An immediate corollary is the following.

\begin{corollary}\label{cor:fiberconnectedsum}
As a lattice, $\MW(\pi'')$ is isometric to $\MW(\pi)\oplus \mathbf{U}^{\oplus 2}\oplus \MW(\pi')$.
\end{corollary}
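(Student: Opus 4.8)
The plan is to read off the lattice structure of $\MW(\pi'')$ directly from the three natural isomorphisms provided by Theorem \ref{thm:fiberconnectedsum}. Since $\MW(\pi'')$ carries an even unimodular lattice structure (it is of the form $\Lambda_{d''}(e)$ by Proposition \ref{prop:MWcohomologyII}, where $d''=d+d'$), and since $I\subset \MW(\pi'')$ is a primitive isotropic sublattice of rank $2$, the general theory of even unimodular lattices applies: a primitive isotropic vector (or, inductively, a primitive isotropic rank-$2$ sublattice) can always be split off as a hyperbolic summand. The key input is already packaged in the theorem: $I^\perp/I\cong \MW(\pi)\oplus\MW(\pi')$ \emph{as lattices}, and the perp/quotient of a primitive isotropic summand in a unimodular lattice recovers the ``middle'' orthogonal complement.

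Concretely, I would argue as follows. First, because $I$ is a primitive isotropic rank-$2$ sublattice of the unimodular lattice $\MW(\pi'')$, the pairing identifies $\MW(\pi'')/I^\perp$ with $I^\vee$ (this is exactly the third isomorphism in the theorem), so $I$ together with a complementary isotropic lift of $I^\vee$ spans a rank-$4$ unimodular sublattice isometric to $\mathbf{U}^{\oplus 2}$. Indeed, $I\cong H^1(X_b)\cong\ZZ^2$ is isotropic and pairs unimodularly with $\MW(\pi'')/I^\perp\cong H_1(X_b)$, and the standard lemma on splitting off a hyperbolic plane (applied twice, or once for the rank-$2$ hyperbolic lattice $\mathbf{U}^{\oplus 2}=I\oplus I^\vee$) gives an orthogonal decomposition
\[
\MW(\pi'')\cong \mathbf{U}^{\oplus 2}\perp (I^\perp/I).
\]
Finally, I would substitute the middle isomorphism $I^\perp/I\cong\MW(\pi)\oplus\MW(\pi')$, which the theorem asserts is an isometry, to conclude
\[
\MW(\pi'')\cong \MW(\pi)\oplus\mathbf{U}^{\oplus 2}\oplus\MW(\pi').
\]

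The main obstacle—really the only nonroutine point—is justifying the splitting of the hyperbolic summand, i.e.\ that a primitive isotropic rank-$2$ sublattice $I$ of an even unimodular lattice $L$ admits a complementary rank-$2$ isotropic sublattice with $I\oplus I^\vee\cong\mathbf{U}^{\oplus 2}$ and $L\cong (I\oplus I^\vee)\perp(I^\perp/I)$. This is a classical fact about unimodular lattices (one chooses a dual isotropic basis by solving for isotropy, using that the restriction of the form to $I^\perp$ has radical exactly $I$ since $L$ is unimodular), but one must check that the primitivity of $I$ and the unimodularity of $\MW(\pi'')$ are genuinely available; both follow from Proposition \ref{prop:MWcohomologyII} (unimodularity of $\Lambda_d(e)$) and from the ``primitive isotropic'' assertion of Theorem \ref{thm:fiberconnectedsum} itself. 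Everything else is bookkeeping, so the corollary is immediate once the splitting lemma is invoked.
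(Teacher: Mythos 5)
Your proposal is correct and is essentially the argument the paper intends: the corollary is stated as an immediate consequence of Theorem \ref{thm:fiberconnectedsum}, obtained exactly by splitting off the primitive isotropic rank-$2$ sublattice $I$ as a $\mathbf{U}^{\oplus 2}$ summand of the even unimodular lattice $\MW(\pi'')$ (evenness and unimodularity coming from Proposition \ref{prop:MWcohomologyII}, since $\pi''$ is again a genus one fibration over $\PP^1$ with integral fibers and simply connected total space) and then substituting the isometry $I^\perp/I\cong \MW(\pi)\oplus\MW(\pi')$. Your care in noting where evenness, primitivity, and unimodularity enter the splitting lemma is exactly the right bookkeeping; nothing further is needed.
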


Applying the gluing formula of Corollary \ref{cor:fiberconnectedsum}, we are now able to compute the Mordell-Weil group of the generic elliptic fibration $\pi_d$.

\begin{proof}[Proof of Theorem \ref{theorem:computeMW}]
\label{proof:MWcompute}
The generic genus one fibration $\pi_d$ is (up to isomorphism) 
obtained by  starting out with $\pi_1$ and 
iterating the above connected sum construction. Since $\MW(\pi_1)\cong \Eb_8(-1)$, such a connected sum decomposition gives 
via Corollary \ref{cor:fiberconnectedsum} a corresponding decomposition of smooth Mordell-Weil lattices :
\[
\MW(\pi_d)\cong \Lambda_d(e)\cong  \Eb_8(-1)\perp \underbrace{2\Ub\perp \Eb_8(-1)\perp \cdots  \perp 2\Ub\perp\Eb_8(-1)}_{d-1}, 
\]
so with $d$ copies of $ \Eb_8(-1)$ and $(d-1)$ copies of $2\Ub$; here the first isomorphism comes from Proposition   \ref{prop:MWcohomologyII}.
\end{proof}

\section{Fibrations over a disk with two nodal fibers}
\label{section:2node}
We here study the most basic situation, namely a genus one fibration over a closed disk $B$ which has only two (interior) singular fibers, both of which are nodal. The special case where the two vanishing cycles on the fibers have intersection number $1$, and so the monodromy around $\partial B$ has  order 6, was studied by Gompf in \cite{G}.

We begin with looking at the  general case, setting up notation as we proceed, and then focus on the case when the two nodal fibers define the same vanishing cycle. Its topology, and in particular the group of  mapping classes it gives rise to,  turns out to be remarkably rich and subtle in structure. 

So we are given a genus one fibration $\pi :X\to B$ over a closed disk 
with only two singular fibers, both of which are nodal and lie over interior points of $B$. Without loss of generality we shall assume that 
$B$ is a closed complex disk of radius $r>1$ centered at $0$ and that the singular fibers lie over $1$ and $-1$.

\subsection{First properties}
It is not difficult to see that $X_{[-1,1]}:=\pi^{-1}[-1,1]$ is a deformation retract of $X$. So the homotopy type of $\pi$ is that of its restriction $X_{[-1,1]}\to [-1,1]$ over $[-1,1]$.  We shall describe  a simple geometric model  for this restriction.

As before, we let $T$ stand for the unit circle in $\CC^\times$, regarded an an oriented abelian Lie group. An orientation-preserving affine 
diffeomorphism $h_0$ of  $T^2$ onto the smooth fiber $X_0$ determines circle subgroups 
$T_{\pm}\subset T^2$ such that  if $E$ is the quotient space of $[-1,1]\times T^2$ obtained by collapsing 
$\{-1\}\times T_{-1}$ and  $\{1\}\times T_{1}$, then $h_0$ extends to a homeomorphism $h:E\cong X_{[-1,1]}$ over $[-1,1]$ 
that is a differentiable isomorphism of affine bundles over $(-1,1)$. 

It is then clear that for each sign $\pm$, the space $X_\pm:=\pi^{-1}[0, \pm 1]$  has  the fiber $X_{\pm 1}$ as a deformation retract and that the natural map 
$H_1(X_0)\to H_1(X_\pm)\cong H_1(X_{\pm 1})\cong \ZZ$ is a surjection. Its kernel $V_\pm\subset H_1(X_0)$, which via $h$ corresponds to the image of $H_1(T_\pm)\to H_1(T^2)$, is
the \emph{vanishing homology} of this degeneration.  See Figure \ref{figure:vanish1}.

\begin{figure}[h]
\includegraphics[scale=0.13]{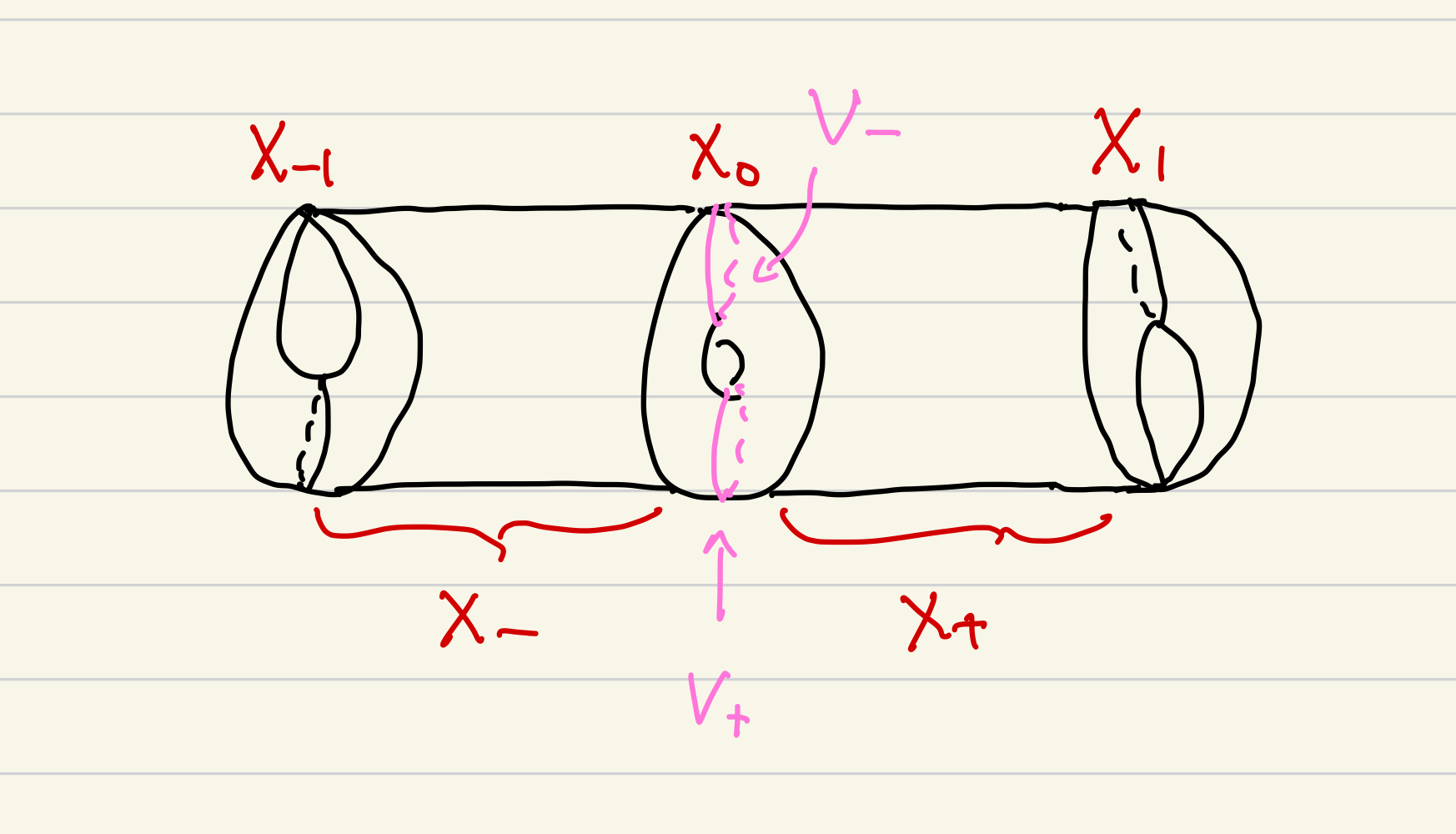}
\caption{\footnotesize The manifold $X_{[-1,1]}$.  In this case the vanishing homology subspaces $V_\pm\subset H_1(X_0)$ are generated by homologous classes.}
\label{figure:vanish1}
\end{figure}

The inclusion $X_0\subset X_\pm$  is up to homotopy obtained by the $2$-cellular extension of $T^2$ obtained by `filling in' the  circle subgroup $ T_{\pm}$ with a $2$-disk. If we fill in both disks we recover the inclusion  $X_0\subset X_{[-1,1]}\sim X$ up to homotopy.
 It is then  clear that the map $H_1(X_0)\to H_1(X_{[-1,1]})\cong H_1(X)$ is onto with kernel $V_-+V_+$ and that the map
 $H_2(X_0)\to H_2(X)$ is injective with cokernel naturally  identified with $V_+\cap V_-$. The following two propositions explain why our interest is mainly in the special case when $V_-=V_+$.

\begin{proposition}\label{prop:dichotomy}

The Mordell-Weil group $\MW(\pi)$ is naturally isomorphic to the cyclic group $H_1(X)\cong H_1(X_0)/(V_++V_-)$. So that group 
is infinite if and only if $ V_-=V_+$, and is  trivial  if and only if 
$V_-+V_+=H_1(X_0)$. 
\end{proposition}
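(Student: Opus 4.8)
The plan is to compute $\MW(\pi)$ using the cohomological characterization from Proposition \ref{prop:MWcohomology}, which applies here since both singular fibers are nodal (hence integral). That proposition gives a natural isomorphism $\MW(\pi)\cong H^1(B, R^1\pi_*\ZZ)$. Since $B$ is a disk (contractible) and $X_{[-1,1]}\to[-1,1]$ is a deformation retract of $\pi$, I can compute this group over the contractible base $[-1,1]$ using the explicit model $E$ built from $[-1,1]\times T^2$ by collapsing $\{-1\}\times T_{-1}$ and $\{1\}\times T_1$. The point is that $R^1\pi_*\ZZ$ is a constructible sheaf on $[-1,1]$ which is the constant sheaf $H^1(X_0)$ away from the two points $\pm1$, with stalks at $\pm1$ equal to the invariants $H^1(X_{\pm1})$ under the local vanishing-cycle monodromy.

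First I would recall, as already established in the text, that $H_1(X)\cong H_1(X_0)/(V_++V_-)$, where $V_\pm$ are the vanishing homology subspaces; by Poincar\'e duality on $X_0=T^2$ the relevant cohomological statement can be phrased directly in terms of these rank-one subgroups. The core computation is a small exact sequence: breaking $[-1,1]$ into the two subintervals $[-1,0]$ and $[0,1]$ meeting at $0$, a Mayer--Vietoris argument for $R^1\pi_*\ZZ$ identifies $H^1([-1,1], R^1\pi_*\ZZ)$ with the cokernel of the map
\[
H^1(X_0)\to H^1(X_0)/V_-^\circ \oplus H^1(X_0)/V_+^\circ,
\]
or dually with $H_1(X_0)/(V_++V_-)$; here the contribution of each nodal fiber is governed by the Picard--Lefschetz formula, which makes $R^1\pi_*\ZZ$ near $\pm1$ the direct image from the punctured interval of the monodromy-invariants. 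Identifying this cokernel with $H_1(X_0)/(V_++V_-)\cong H_1(X)$ is then a direct diagram chase, and naturality of all the maps is automatic since every ingredient (the Leray sheaf, the vanishing subspaces, Poincar\'e duality) is natural.

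The two stated consequences follow immediately once the isomorphism $\MW(\pi)\cong H_1(X_0)/(V_++V_-)$ is in hand. Since $V_+,V_-$ are each rank-one primitive subgroups of the rank-two lattice $H_1(X_0)\cong\ZZ^2$, the quotient $H_1(X_0)/(V_++V_-)$ is infinite precisely when $V_++V_-$ has rank $1$, i.e.\ when $V_-=V_+$ (both being primitive of rank one, their sum has rank one iff they coincide); and it is trivial precisely when $V_++V_-$ is all of $H_1(X_0)$, i.e.\ when the two vanishing cycles span. These are exactly the two assertions in the proposition.

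I expect the main obstacle to be setting up the computation of $H^1([-1,1], R^1\pi_*\ZZ)$ cleanly, in particular correctly identifying the stalk behavior of $R^1\pi_*\ZZ$ at the two nodal values and checking that the resulting map is precisely the quotient $H^1(X_0)\to H^1(X_0)/(V_-^\circ+V_+^\circ)$ with the correct signs and primitivity. The geometric input—that the inclusion $X_0\subset X_\pm$ is homotopy equivalent to attaching a $2$-cell along $T_\pm$, so that the kernel of $H_1(X_0)\to H_1(X_\pm)$ is exactly $V_\pm$—is already supplied in the preceding paragraphs, so the remaining work is bookkeeping with the Leray spectral sequence rather than any genuinely new geometry.
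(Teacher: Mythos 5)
Your proposal is correct and follows essentially the same route as the paper: both invoke Proposition \ref{prop:MWcohomology}, restrict the constructible sheaf $R^1\pi_*\ZZ$ to the interval $[-1,1]$, compute $H^1$ of the interval as a cokernel built from the stalks $H^1(X_{\pm 1})$ at the two nodal values, and then dualize to obtain $H_1(X_0)/(V_++V_-)\cong H_1(X)$. The only cosmetic difference is that the paper takes the cokernel of $H^1(X_{1})\oplus H^1(X_{-1})\to H^1(X_0)$ directly, whereas your Mayer--Vietoris formulation $\coker\bigl(H^1(X_0)\to H^1(X_0)/V_-^\circ\oplus H^1(X_0)/V_+^\circ\bigr)$ computes the same group via the standard exact sequence
\[
0\to A/(B\cap C)\to A/B\oplus A/C\to A/(B+C)\to 0 .
\]
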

\begin{proof}The inclusion $[-1,1]\subset B$ induces an isomorphism $\MW(\pi)\cong H^1(B; R^1\pi_*\ZZ)\cong H^1([-1,1]; R^1\pi_{*}\ZZ)$.
The restriction of   $R^1\pi_{*}\ZZ$ to $[-1,1]$ is  simple: over $(-1,1)$ it is constant equal to $H^1(X_0)$ and its stalk in  $\pm 1$ is $H^1(X_{\pm 1})$. So $H^1([-1,1]; R^1\pi_{*}\ZZ)$ is the cokernel of the natural map 
 $H^1(X_{1})\oplus H^1(X_{-1})\to H^1(X_0)$.  Alexander duality turns this  into  $V_+\oplus V_-\to H_1(X_0)$ defined by the inclusions $V_\pm\subset H_1(X_0)$.
\end{proof}

In case $V_-$ and $V_+$ are distinct  (so that $\MW(\pi)\cong H_1(X)$ is finite), we can say a bit more:

\begin{proposition}\label{lemma:less interesting case}
When $V_-$ and $V_+$ are distinct, then $\MW(\pi, \p\pi)$ is trivial. The  natural map $\MW(\pi)\to \MW(\p\pi)$ is (therefore) injective with cokernel naturally identified with $H_1(X)$.

Furthermore, if  $q$ is the order of $H_1(X)\cong H_1(X_0)/(V_-+V_+)$, then the trace of the monodromy along $\p B$ on  $H_1$ of a fiber equals $2-q^2$. In particular, this monodromy is hyperbolic unless $q=1$ (i.e., when $V_-+V_+=H_1(X_0)$), in which case it has order $6$.
\end{proposition}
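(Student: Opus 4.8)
The plan is to handle the three assertions in turn, leaning on Propositions \ref{prop:dichotomy} and \ref{prop:MWpair} together with the homological description of $X$ recorded just above the statement.

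\emph{Triviality of $\MW(\pi,\p\pi)$.} By Proposition \ref{prop:MWpair} the group $\MW(\pi,\p\pi)$ is naturally isomorphic to $H_2(X)/\ZZ e$, where $\ZZ e$ is the image of $H_2(X_0)\cong\ZZ$ under the (injective) map $H_2(X_0)\to H_2(X)$. We already observed that this map has cokernel naturally identified with $V_+\cap V_-$, so $\MW(\pi,\p\pi)\cong V_+\cap V_-$. Now $V_\pm$ are rank-one subgroups of $H_1(X_0)\cong\ZZ^2$ generated by primitive vanishing cycles; being distinct they are $\QQ$-linearly independent, whence $V_+\cap V_-$ has rank $0$ and is therefore trivial. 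This gives the first assertion.

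\emph{Injectivity and the cokernel.} Injectivity of $\MW(\pi)\to\MW(\p\pi)$ is immediate from the exact sequence $\MW(\pi,\p\pi)\to\MW(\pi)\to\MW(\p\pi)$ of Proposition \ref{prop:MWpair}. To compute the cokernel I would prolong this to the full long exact sequence of the pair $(B,\p B)$ with coefficients in $\Fscr:=R^1\pi_*\ZZ$, using the identifications of Propositions \ref{prop:MWcohomology} and \ref{prop:MWpair}:
\[
0=\MW(\pi,\p\pi)\to\MW(\pi)\to\MW(\p\pi)\xrightarrow{\ \delta\ } H^2(B,\p B;\Fscr)\to H^2(B;\Fscr).
\]
Two computations then finish the job. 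First, $H^2(B;\Fscr)=0$: since $B$ retracts onto $[-1,1]$, which contains all critical values and onto which $\Fscr$ restricts as a constructible sheaf, $H^*(B;\Fscr)\cong H^*([-1,1];\Fscr)$, and an interval has cohomological dimension $\le 1$. Hence $\delta$ is onto and $\operatorname{coker}(\MW(\pi)\to\MW(\p\pi))\cong H^2(B,\p B;\Fscr)$. Second, $H^2(B,\p B;\Fscr)\cong H_1(X)$: in the relative Leray spectral sequence $E_2^{p,q}=H^p(B,\p B;R^q\pi_*\ZZ)\Rightarrow H^{p+q}(X,\p X)\cong H_{4-p-q}(X)$, integrality of the fibers together with $H^1(B,\p B;\ZZ)=0$ kills every term of total degree $3$ except $E_2^{2,1}=H^2(B,\p B;\Fscr)$, which therefore equals $H^3(X,\p X)\cong H_1(X)$. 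This yields the stated natural identification of the cokernel with $H_1(X)$.

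\emph{The monodromy and its trace.} Write $a,b$ for primitive generators of $V_+,V_-$ and set $n:=a\cdot b$; since $V_+\neq V_-$ we have $n\neq0$, and $q=|H_1(X_0)/(\ZZ a+\ZZ b)|=|n|$. The monodromy along $\p B$ is the composite of the two Picard--Lefschetz transvections attached to the nodal fibers, $T_v(x)=x+(x\cdot v)v$, both right-handed. Writing $T_a=\mathrm{id}+a\otimes\alpha$, $T_b=\mathrm{id}+b\otimes\beta$ with $\alpha=(\,\cdot\,)\cdot a$ and $\beta=(\,\cdot\,)\cdot b$, one expands
\[
T_b T_a=\mathrm{id}+a\otimes\alpha+b\otimes\beta+(a\cdot b)\,b\otimes\alpha,
\]
and, using $a\cdot a=b\cdot b=0$ and $\alpha(b)=b\cdot a=-n$, takes traces to get $\operatorname{tr}(T_bT_a)=2-n^2=2-q^2$ (the order of the twists, and the global sign in the Picard--Lefschetz formula, are irrelevant to the trace). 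For $q=1$ the trace is $1$, so the eigenvalues are primitive sixth roots of unity and the monodromy has order $6$; for $q\ge2$ the trace is $2-q^2\le-2$, so $|\operatorname{tr}|\ge2$ and the monodromy has infinite order (hyperbolic once $q\ge3$).

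\emph{Main obstacle.} The only genuinely delicate point is the \emph{natural} identification of the cokernel with $H_1(X)$: one must verify $H^2(B;\Fscr)=0$ and the spectral-sequence isomorphism $H^2(B,\p B;\Fscr)\cong H_1(X)$, rather than merely matching cardinalities. (A cardinality check is available from Lemma \ref{lemma:MWcircle}, since $\MW(\p\pi)\cong H_1(X_b)_A$ has order $|\det(A-\mathrm{id})|=2-\operatorname{tr}(A)=q^2$ while $|\MW(\pi)|=|H_1(X)|=q$, forcing a cokernel of order $q$; but this does not by itself pin down the natural isomorphism.) The trace computation is routine once the right-handedness of both transvections is used.
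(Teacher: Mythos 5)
Your proof is correct, and it mixes the paper's own argument with two genuinely different routes. For the vanishing of $\MW(\pi,\p\pi)$, the paper runs the Mayer--Vietoris sequence of the pair $(X_-,X_+)$ to conclude that $H_2(X)=\ZZ e$; you instead quote the fact, recorded just before Proposition \ref{prop:dichotomy}, that $H_2(X_0)\to H_2(X)$ is injective with cokernel $V_+\cap V_-$, and observe that two distinct primitive rank-one subgroups of $H_1(X_0)\cong\ZZ^2$ meet trivially. This is a legitimate shortcut given what the paper has already established, and both arguments feed into the same identification $\MW(\pi,\p\pi)\cong H_2(X)/\ZZ e$ from Proposition \ref{prop:MWpair}. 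For the cokernel of $\MW(\pi)\to\MW(\p\pi)$, the paper stays upstairs: it identifies the map with $H^2(X)^e\to H^2(\p X)^e$ and uses the cohomology sequence of $(X,\p X)$ together with $H^3(X)=0$ (homotopy type of a $2$-complex) and duality $H^3(X,\p X)\cong H_1(X)$. You work downstairs on $B$, with the long exact sequence for $(B,\p B)$ with coefficients in $\Fscr=R^1\pi_*\ZZ$, the vanishing $H^2(B;\Fscr)\cong H^2([-1,1];\Fscr)=0$, and the relative Leray spectral sequence giving $H^2(B,\p B;\Fscr)\cong H^3(X,\p X)\cong H_1(X)$. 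That is exactly the machinery the paper deploys in the proof of the next statement (Proposition \ref{prop:equinodalMW}), so it is sound; the paper's version of this step is just shorter. Your trace computation is the paper's computation done coordinate-freely rather than in the basis $\delta_+=e_1$, $\delta_-=pe_1+qe_2$.

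One point deserves emphasis: your treatment of $q\ge 2$ is actually more accurate than the paper's. The paper's proof claims that for $|q|\ge 2$ the trace is $\le -3$; this is an arithmetic slip, since $q=2$ gives trace $2-q^2=-2$, and the resulting monodromy (e.g.\ with vanishing cycles $e_1$ and $e_1+2e_2$ one gets $\left(\begin{smallmatrix}3&-4\\4&-5\end{smallmatrix}\right)$) is minus a nontrivial unipotent: it has infinite order but is \emph{not} hyperbolic. So the assertion ``hyperbolic unless $q=1$'' fails at $q=2$, and your hedge ``infinite order, hyperbolic once $q\ge 3$'' is the correct conclusion. Note, however, that to deduce infinite order at $q=2$ from trace $-2$ you must still rule out the monodromy being $-\mathrm{I}$; this takes one line: applying the composite transvection to a generator $a$ of $V_+$ gives $a+(a\cdot b)\,b$, which can equal $-a$ only if $a$ and $b$ are proportional, contradicting $V_+\neq V_-$.
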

\begin{proof}
The Mayer-Vietoris sequence of the pair $(X_-, X_+)$ gives the exact sequence
\[
0\to H_2(X_0) \to H_2(X)\to H_1(X_0)\to H_1(X_-)\oplus H_1(X_+)\to H_1(X).
\]
The map  $H_1(X_0)\to H_1(X_\pm)$ is given by $\pm 1$ times the natural map $H_1(X_0)\to H_1(X_0)/V_\pm$. Since $V_-\not=V_+$, it follows that $H_1(X_0)\to H_1(X_-)\oplus H_1(X_+)$ is injective. We deduce that  $H_2(X)$ is generated by $e$ so that $\MW(\pi, \p\pi)$, which by  Proposition \ref{prop:MWpair}  can be identified with $H_2(X)/\ZZ e$, is trivial. This 
 proposition  also identifies $\MW(\pi)\to \MW(\p\pi)$ with $H^2(X)^e\to H^2(\p X)^e$. The latter  map fits in the exact sequence (derived from that of the pair $(X, \p X)$)
\[
 H^2(X)^e\to H^2(\p X)^e\to H^3(X, \p X)\to H^3(X).
 \]
Since $X$ has the homotopy type of a $2$-dimensional CW complex,  $H^3(X)=0$ and $H^3(X, \p X)\cong H_1(X)$ by Alexander duality. This proves the first assertion.

For the second statement, we choose a basis $e_1,e_2$ of $H_1(X_0)$ such that $V_+$ is spanned by $\delta_+=e_1$ and $V_-$ by $\delta_-=pe_1+qe_2$ (where $p$ and $q$ must be relatively prime). Then the monodromy is the composite of two Picard-Lefschetz transformations, namely  $x\mapsto x + (x\cdot \delta_+)\delta_+$ followed by $x\mapsto x + (x\cdot \delta_-)\delta_-$.
A  straightforward computation shows that the trace of this composite is $2-q^2$, as asserted.  So for $|q|=1$, the trace is $1$ and any element of $\SL_2(\ZZ)$ with that property has order $6$, whereas 
for $|q|\ge 2$, the trace is $\le -3$ and any element of $\SL_2(\ZZ)$ with that property is hyperbolic.
\end{proof}

\begin{remark}\label{rem:nonunique}
The homeomorphism $h:E\cong X_{[-1,1]}$ is \emph{not} unique up to relative isotopy. This is because the group of
 isotopy classes of self-homemorphisms of $X_{[-1,1]}$ relative to the projection onto $[-1,1]$ can be nontrivial. Indeed that group is naturally isomorphic with $H_1(X)$.
To be precise, note that if $f: [-1,1]\to T^2$ is a smooth loop that is  constant equal to the identity element on  a neighborhood of $\{-1,1\}$, then we can postcompose $h$ with the 
the map induced by 
\[
(s,  {u})\in [-1,1]\times T^2\mapsto (s, f(s){u})\in [-1,1]\times T^2
\]
and this may represent a different isotopy class. Such an $f$ defines an element of $H_1(T^2)$ and it is  
not hard to check that the change in relative isotopy class only depends on that element. 
All the relative isotopy classes  of such $h$ are thus obtained. In fact,  $H_1(X_0)\cong H_1(T^2)$ permutes the choices transitively  and 
since $H_1(T_\pm)$ gets killed over $\pm 1$, that action will be  through the cokernel of 
$H_1(X_0)/(V_-+V_+)\cong H_1(X)$. On may verify that the  latter action is simply transitive. 
\end{remark}

\emph{From now on we assume that the two nodal   fibers of $\pi$ have the same vanishing homology, so that $V_-=V_+$.} 
\\

We assume that  if  $\delta,\delta'\in H_1(T^2)$ denote the generators defined by the factors, then $V_-=V_+=\ZZ\delta$. So then $H_1(X)\cong H_1(X_0)/\ZZ \delta$ and  $H_2(X_0)$ embeds in $H_2(X)$ with cokernel identified with $V_+$ (hence is free abelian of rank two).  We  can  exhibit a generator of that cokernel explicitly  by representing the two vanishing cycles in $T^2$  by 
the same circle $T\times\{1\}$: this defines a   $2$-sphere, namely the image of $[-1,1]\times (T\times\{1\})$ under the projection $[-1,1]\times T^2\to E$. This is indeed a topological oriented $2$-sphere  $C$ in $X_{[-1,1]}$. The fundamental classes of $C$ and of the central fiber $X_0$ make up a free basis of $H_2(X)$. We shall see in the next subsection that we can arrange $C$ is smooth with $C\cdot C=-2$.

The monodromy around both $1$ and $-1$ is given by 
\[
\tau_{\delta}: a\in H_1(T^2)\mapsto a+(a\cdot \delta)\delta \in H_1(T^2).
\]
Hence the  monodromy of $\pi$ over $\p B$  is $\tau_{\delta}^2$ (connect $0\in B$ with the point of $\p B$ via an interval on the  negative imaginary axis to identify a fiber over $\p B$ with $T^2$). By Lemma \ref{lemma:MWcircle} this identifies $\MW(\p\pi)$ with 
the cokernel of $\tau_{\delta}^2-1: a\mapsto 2(a\cdot \delta)\delta$, i.e., with $(\ZZ/2)\delta\oplus  \ZZ\delta'$.

The situation is quite different from that of Proposition \ref{lemma:less interesting case}:

\begin{proposition}\label{prop:equinodalMW}The exact sequence $\MW(\pi, \p\pi)\to \MW(\pi)\to \MW(\p\pi)$ is naturally isomorphic to the sequence 
\[
H_2(C) \xrightarrow{2\times} H_1(X_0)/V_+\to H_1(X_0)/ 2 V_+,
\]
where  the first map sends a generator of $H_2(C)$ to twice a generator of  $H_1(X_0)/V_+$, so that the image of the second map is the order two subgroup $V_+/2 V_+$.
\end{proposition}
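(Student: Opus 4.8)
The plan is to route everything through the cohomological identifications already in place, turning the three Mordell--Weil groups and the two maps between them into explicit pieces of (co)homology. First I would name the groups. By Proposition~\ref{prop:MWpair} the sequence $\MW(\pi,\p\pi)\to\MW(\pi)\to\MW(\p\pi)$ is naturally isomorphic to
\[
H_2(X)/\ZZ e\to H^2(X)^e\to H^2(\p X)^e,
\]
the first arrow induced by the intersection form. Since $\{[C],e\}$ is a free basis of $H_2(X)$, and since $H_1(X)=H_1(X_0)/V_+$ is free of rank one (so that $H^2(X)=\Hom(H_2(X),\ZZ)$ has no torsion), I obtain $H_2(X)/\ZZ e\cong H_2(C)$, generated by $[C]$. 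Proposition~\ref{prop:dichotomy} gives $\MW(\pi)\cong H_1(X_0)/V_+$, and Lemma~\ref{lemma:MWcircle} applied to the monodromy $\tau_\delta^2$ over $\p B$ gives $\MW(\p\pi)\cong H_1(X_0)/2V_+$, because $\im(\tau_\delta^2-1)$ is exactly $2V_+=2\ZZ\delta$.

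Next I would compute the first map. In the basis $\{[C],e\}$ the intersection form is $\left(\begin{smallmatrix}-2&0\\0&0\end{smallmatrix}\right)$, using $C\cdot C=-2$, $C\cdot e=0$ and $e\cdot e=0$. Hence under $H_2(X)\to H^2(X)$ the class $[C]$ maps to $-2$ times the generator of the rank-one group $H^2(X)^e$, while $e\mapsto 0$. This is precisely the assertion that the first map $H_2(C)\to H_1(X_0)/V_+$ sends a generator to twice a generator; in particular its image has index $2$, so by exactness of the Mordell--Weil sequence the image of the second map $r:\MW(\pi)\to\MW(\p\pi)$ is cyclic of order $2$.

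The main work, and the expected obstacle, is to identify \emph{which} order-two subgroup of $H_1(X_0)/2V_+$ is $\im(r)$: I must show it is the torsion subgroup $V_+/2V_+$ and not some other copy of $\ZZ/2$. For this I would introduce the auxiliary surjection $j:\MW(\p\pi)=H_1(X_0)/2V_+\to H_1(X)=H_1(X_0)/V_+$ induced by the inclusion $\iota:X_0\hookrightarrow X$. Using the explicit boundary sections $\sigma_v$ of Lemma~\ref{lemma:MWcircle}, the difference loop $[\sigma_v]-[\sigma_0]$ equals $\iota_* v$ in $H_1(\p X)$, so $j$ is the natural quotient $v\bmod 2V_+\mapsto v\bmod V_+$ and has kernel exactly $V_+/2V_+$. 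Then I would check that $j$ annihilates $\im(r)$: any $\tau\in\MW(\pi)$ is a difference $\sigma_1-\sigma_0$ of sections over the \emph{disk} $B$, and each $\sigma_i(\p B)$ bounds the embedded disk $\sigma_i(B)\subset X$, hence is null-homologous; therefore $j(r(\tau))=0$. This gives $\im(r)\subseteq\ker j=V_+/2V_+$, and since $\im(r)$ already has order $2$ the inclusion is an equality. Assembling the three identifications with the two computed maps yields the claimed isomorphism of sequences, with exactness inherited from the Mordell--Weil sequence.
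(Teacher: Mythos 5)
Your proof is correct, but it takes a genuinely different route from the paper's. The paper does not invoke Proposition \ref{prop:MWpair} here: it embeds the Mordell--Weil sequence into the long exact sequence of the pair $(B,\p B)$ with coefficients in $R^1\pi_*\ZZ$, checks that $H^0(B,\p B;R^1\pi_*\ZZ)$ and $H^2(B;R^1\pi_*\ZZ)$ vanish, and then computes the two remaining unknown terms via the Leray spectral sequence of $(X,\p X)$, obtaining $H^1(B,\p B;R^1\pi_*\ZZ)\cong H_2(C)$ (as the cokernel of the Gysin map $H_0(B)\to H_2(X)$, $1\mapsto e$) and $H^2(B,\p B;R^1\pi_*\ZZ)\cong H^3(X,\p X)\cong H_1(X)$. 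This produces a four-term exact sequence
\[
0\to H_2(C)\to H_1(X_0)/V_+\to H_1(X_0)/2V_+\to H_1(X)\to 0,
\]
from which both assertions follow by pure group theory. You instead use Proposition \ref{prop:MWpair} as a black box and compute the first map directly from the intersection form in the basis $\{c,e\}$ of $H_2(X)$, so the factor $2$ is traced to the self-intersection $C\cdot C=-2$. Your route is more economical (no new spectral-sequence work) and more geometric, since it explains where the $2$ comes from; what it does not give, and what the paper's computation yields for free, is the identification of the cokernel of $\MW(\pi)\to\MW(\p\pi)$ with $H_1(X)\cong \ZZ$. Two smaller points. First, you rely on $C\cdot C=-2$, which the paper only establishes in the \emph{following} subsection via the Weierstra\ss\ model; that argument is independent of the proposition, so there is no circularity, only a reordering of the exposition. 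Second, the step you single out as the main difficulty is in fact automatic: $H_1(X_0)/2V_+\cong(\ZZ/2)\delta\oplus\ZZ\delta'$ has a unique element of order two (the paper records exactly this just before Proposition \ref{prop:sectionrestriction}), so any order-two subgroup equals $V_+/2V_+$, and knowing $\im(r)$ has order two already finishes the proof. Your auxiliary argument with $j$ and sections bounding disks is correct, and it has the merit of identifying the image naturally rather than by counting, but it is redundant.
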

\begin{proof}
We already established in Proposition \ref{prop:dichotomy} and  Proposition \ref{lemma:less interesting case} isomorphisms $\MW(\pi)\cong H_1(X_0)/(V_++V_-)=H_1(X_0)/V_+$ and  $\MW(\p\pi)\cong H_1(X_0)/ 2 V_+$. For the rest of the argument, we recall that the $\MW$-exact sequence in question appears as part of a long exact sequence
\begin{multline*}
\cdots\to H^0(B, \p B;R^1\pi_*\ZZ)\to H^1(B, \p B;R^1\pi_*\ZZ)\to H^1(B;R^1\pi_*\ZZ)\to H^1(\p B;R^1\pi_*\ZZ)\to\\
\to  H^2(B, \p B;R^1\pi_*\ZZ)\to H^2(B;R^1\pi_*\ZZ)\to\cdots
\end{multline*}
Since $H^0(B, \p B;R^1\pi_*\ZZ)$ consists of the  sections of $R^1\pi_*\ZZ$ that vanish on $\p B$, this group is trivial.
The group  $H^2(B;R^1\pi_*\ZZ)$ is also trivial, because  it restricts isomorphically to $H^2([-1,1];R^1\pi_*\ZZ)$ and that group is zero too, so that there is an exact sequence
\begin{center}
\begin{small}
\begin{tikzcd}
0\to  H^1(B, \p B;R^1\pi_*\ZZ)\arrow[r]\arrow[d, equal] &H^1(B;R^1\pi_*\ZZ)\arrow[r]\arrow[d, equal]  &H^1(\p B;R^1\pi_*\ZZ)\arrow[r]\arrow[d, equal]  &H^2(B, \p B;R^1\pi_*\ZZ)\to 0\\
\MW(\pi, \p\pi)  & \MW(\pi) &\MW(\p\pi) &
\end{tikzcd}
\end{small}
\end{center}
We are going to determine the as yet unknown $H^1(B, \p B;R^1\pi_*\ZZ)$ and $H^2(B, \p B;R^1\pi_*\ZZ)$.
For this we use the Leray spectral sequence 
\[
E_2^{p,q}=H^p(B, \p B; R^q\pi_*\ZZ)\Rightarrow H^{p+q}(X, \p X)\cong H_{4-p-q}(X),
\]
where we keep in mind that $H_1(X)\cong H_1(X_0)/V_+$, that  $H_2(X_0)$ embeds in $H_2(X)$ with cokernel  
$H_2(C)$  and $H^k(X)=0$ for $k>2$.

Note that  $E^{p,q}$ vanishes  unless $0\le p, q\le 2$. We already observed  that $E_2^{0,1}=0$, so that  $E_2^{2,0}=
H^2(B, \p B; R^0\pi_*\ZZ)=H^2(B, \p B; \ZZ)\cong \ZZ$ must embed in $H^2(X)$. Also,  $E_2^{0,2}=
H^0(B, \p B; R^2\pi_*\ZZ)\cong H^0(B, \p B; \ZZ)=0$ and hence the cokernel  of $H^2(B, \p B; \ZZ)\to H^2(X, \p X; \ZZ)$ 
can be identified with $E^{1,1}=H^1(B, \p B; R^1\pi_*\ZZ)$. Duality identifies  $\pi^*: H^2(B, \p B; \ZZ)\to H^2(X, \p X; \ZZ)$ with
the Gysin map $\pi^!: H_0(B)\to H_2(X)$, which assigns to $1\in H_0(B)$ the fiber class. In other words, the  cokernel in question is identified with that of $H_2(X_0)\to H_2(X)$, i.e., with $H_2(C)$.

This also shows that $E_2^{2,1}=H^2(B, \p B;R^1\pi_*\ZZ)$ is isomorphic to $H^3(X, \p X)\cong H_1(X)\cong V_+$. So the exact sequence above becomes
\[
0\to H_2(C)\to H_1(X_0)/V_+\to H_1(X_0)/ 2 V_+\to V_+\to 0
\]
and hence the maps in it must have the asserted properties. 
\end{proof}

\subsection{A Weierstra\ss\ model and a circle action}\label{subsect:weierstrass}
A  Weierstra\ss\ model for an equinodal pair is given by the family  of plane cubics defined by
\[
y^2+x^3+x^2 +\eps  (s^2-1), 
\] 
where we let $\eps>0 $ be small, and where we must add the section at infinity in order to  make this a (degenerating) family of elliptic curves. 

This sphere can be made explicit in terms of our Weierstra\ss\ model: for $s\in (-1,1)$, the
cubic equation $x^3+x^2 +\eps  (s^2-1)=0$ has three distinct real roots: one close to $-1$ and two close to zero and 
the real locus of the cubic curve  $y^2+x^3+x^2 =\eps  (1-s^2)$ in $\RR^2$ has two connected components, one of which is compact (so a differentiable circle) which meets the $x$-axis in the two roots close to zero  and the other is a properly embedded copy of $\RR$  that is compactified by the point at  infinity. If we let $s\to \pm 1$, then the compact component  shrinks to a point, so that if we let $s$ run over the interval $[-1,1]$, then these compact components trace out a $2$-sphere. This
$2$-sphere, which is easily checked to be smooth,  is our $C$. This $C$ in turn, appears as a vanishing 2-cycle of the  following degeneration of cubic surfaces 
\[
t=\big(y^2+x^3+x^2) +\eps  s^2,\quad  |t|\le \eps, 
\]
which for $t=0$  acquires a quadratic singularity at $(x,y,s)=(0,0,0)$. A (Picard-Lefschetz) vanishing cycle in dimension two has self-intersection $-2$, and so it follows that $C\cdot C=-2$.

\begin{remark}\label{rem:}
One can check that the  homology classes $c\in H_2(X)$ of $C$ and $e\in H_2(X)$ of a smooth fiber make up a free basis of $H_2(X)$.
Their intersection numbers are $c\cdot c=-2$, $e\cdot e=0$ and $c\cdot e=0$. In particular, $e$ spans the radical of the intersection pairing on $H_2(X)$.
\end{remark}

\begin{remark}\label{rem:}
It is worth noting that the  the group $\Trans(\pi)$ of fiberwise translations  of $\pi$ contains as a subgroup a copy $T_\pi$ of the circle group $T$. It fixes the nodes in the singular fibers, but is otherwise free and we can take the $2$-sphere $C$ to be such that each fiber of the projection $\pi|C: C\to [-1,1]$ is a $T_\pi$-orbit. This action commutes with the geometric monodromy  and its orbit space defines a $T$-bundle over $B$ (which must be trivial since $B$ is contractible). A trivialization  $X\to T$ of this bundle  induces an isomorphism $H_1(X)\to H_1(T)\cong\ZZ$.
\end{remark}

\subsection{The braid action} Since the embedded smooth sphere $C$ has self-intersection number $-2$,  it determines a $2$-dimensional Dehn twist $T(C)$ as an element of  the mapping class group  of the pair $(X, \p X)$. It is well-known that  $T(C)$ only depends on the isotopy class of $C$ and has square isotopic to the identity. We  will  represent  $T(C)$ as the monodromy of the function $t=\big(y^2+x^3+x^2) +\eps  s^2$ above and this will in fact produce a relative mapping class $\tau (C)$ in $\Mod(\pi, \p\pi)$ that lifts  $T(C)\in \Mod(X, \p X)$. 

Recall that the  connected component group of the group diffeomorphisms of $B$  that are the identity near $\p B$ and preserve $\{-1,1\}$ is infinite cyclic with a distinguished (positive) generator. We can  represent this generator by the simple braid, i.e.,  as  the 
monodromy $\tau$ of the  bundle pair over the circle $(T\times B, \Dscr)\to T$, where  $\Dscr$ is the set of $(u,s)\in T\times B$ satisfying $s^2=u$. Its square  is the Dehn twist along a $\p B$ and  generates the mapping class group of the pair 
$(B, \p B\cup \{-1,1\})$.
We will lift this to a map $T\times X\to T\times B$ over $T$  (making it a family of genus one fibrations of $X$ parametrized by $T$) in such a manner that $\Dscr$ 
is its set of critical values  and the fiber over $1\in T$ is the given $\pi: X\to B$. Although it ought to be possible to express this entirely in 
differential-topological terms, it is much easier to exploit the complex structure and rely on a bit of singularity theory. To this end, we return to the 2-parameter family of cubic curves that we used to define the vanishing sphere $C$, except that we rescale the $t$-parameter:
 \[
y^2+x^3+x^2 =\eps t-\eps  s^2. 
\]
We here let $s$ run over $B$ and let $t$ run over an open complex  disk $\Delta$ of radius $>1$ (as before, we assume  $\eps >0$ small). Write $\Xcal$  for the preimage 
of $B\times \Delta$, including its section at infinity and regard $\Xcal\to \Delta$ as a family of elliptic fibrations
$\{\pi_t:\Xcal_t\to B\}_{t\in \Delta}$, which for $t=1$ returns $\pi :X\to B$.
The singular fibers are defined by $t=s^2$, so if $t$ runs over the circle of radius $1$ in $\Delta$ the critical values of $\pi_t$ traverse a basic braid $\Dscr$. 

Use $(x,y,s)$ as global coordinates for $\Xcal$ minus its section at infinity, so that $t$ is there given by
$t=\eps ^{-1}\big(y^2+x^3+x^2) + s^2$.
All fibers of this map are smooth surfaces except the  central fiber $\Xcal_0$,  which  has an ordinary double point.
As is well-known, the monodromy of this family, regarded as a diffeomorphism of $\Xcal_1 =X$ onto itself,  is a $2$-dimensional Dehn twist defined by the vanishing 2-sphere $C$. Since the  action $T_\pi$ on $X$ extends  
to one on $\Xcal$, we find:

\begin{corollary}\label{cor:2Dehn}
Let $\tau$ be a diffeomorphism of $B$ that is the identity near $\p B$,  exchanges $1$ and $-1$ and represents the simple braid. Then $C$ determines a lift of $\tau$  to an automorphism of $\pi$ that commutes with the action of $T_\pi$.
Its image $\tau(C)$ in $\Mod(\pi, \p\pi)$ is of infinite order, but  its image in $\Mod(X, \p X)$
is the $2$-dimensional Dehn twist $T(C)$ and hence of order two. 
\end{corollary}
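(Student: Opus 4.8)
The plan is to read all three assertions off the single holomorphic family $\Xcal\to\Delta$ set up above, in which the required diffeomorphism appears as a geometric monodromy. Concretely, I would work with $(s,t):\Xcal\to B\times\Delta$, whose discriminant in $B\times\Delta$ is the smooth curve $\{t=s^2\}$ and whose restriction over $\{t=1\}$ is $\pi:X\to B$. Since $T_\pi$ acts on each fiber of $(s,t)$ by translation, it acts on $\Xcal$ commuting with both projections, so by averaging over the compact group $T$ one obtains a $T_\pi$-invariant connection for the smooth fibration $\Xcal\ssm\{t=s^2\}\to B\times\Delta$. Over the loop $\theta\mapsto e^{i\theta}$, $\theta\in[0,2\pi]$, the two nodal points $\pm\sqrt t$ trace out the simple braid, exchanging $1$ and $-1$; choosing a family of base diffeomorphisms $g_\theta$ of $B$ (the identity near $\p B$) dragging $\{1,-1\}$ to $\{e^{i\theta/2},-e^{i\theta/2}\}$ and lifting it through the invariant connection produces fiber-preserving diffeomorphisms $G_\theta:X\to\Xcal_{e^{i\theta}}$ over $g_\theta$ with $G_0=\id$. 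The endpoint $\Phi:=G_{2\pi}$ is then a diffeomorphism of $X=\Xcal_{1}$ that covers the simple braid $\tau=g_{2\pi}$, is the identity near $\p X$, and commutes with $T_\pi$ by invariance of the connection. This $\Phi$ is the desired lift $\tau(C)\in\Mod(\pi,\p\pi)$.

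For the image in $\Mod(X,\p X)$ I would forget the base $B$ and view $\Phi$ purely as the monodromy of $t:\Xcal\to\Delta$ around $t=0$. As recorded just above, $t$ has a single ordinary double point on $\Xcal_0$ with vanishing $2$-sphere $C$, so the Picard--Lefschetz theorem identifies $\Phi$ in $\Mod(X,\p X)$ with the $2$-dimensional Dehn twist $T(C)$. Since $C$ is a smooth $(-2)$-sphere, $T(C)^2$ is isotopic to the identity; and $T(C)$ acts on $H_2(X)$ by the reflection $x\mapsto x+(x\cdot c)c$ with $c=[C]$, which is nontrivial because $c\cdot c=-2$. Hence $T(C)$ has order exactly two.

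For the infinite-order statement I would use the forgetful homomorphism $\Mod(\pi,\p\pi)\to\pi_0\Diff(B,\p B;\{1,-1\})$ recording the induced base diffeomorphism; this is well defined because a fiber-preserving map carries singular fibers to singular fibers, hence preserves the discriminant $\{1,-1\}$. It sends $\tau(C)$ to the class of the simple braid $\tau$, and $\tau$ has infinite order: its square is the Dehn twist along $\p B$, which generates the infinite cyclic mapping class group of the pair $(B,\p B\cup\{1,-1\})$. Therefore $\tau(C)$ has infinite order in $\Mod(\pi,\p\pi)$, even though its image in $\Mod(X,\p X)$ is $2$-torsion.

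The main obstacle is the first paragraph: I must produce \emph{one} diffeomorphism that can legitimately be read in all three ways simultaneously — as a $T_\pi$-equivariant lift of the base braid $\tau$ living in $\Mod(\pi,\p\pi)$ and, after forgetting $B$, as the total-space Lefschetz monodromy $T(C)$. The key point is that these are the same parallel-transport map for $\Xcal\to\Delta$; the care lies in selecting the connection so that it is at once horizontal for the $\Delta$-projection, compatible with the $B$-projection (so that transport covers $\tau$), and $T_\pi$-invariant, and in matching the combinatorics of the base half-twist (the roots $\pm\sqrt t$ exchanging as $t$ encircles the origin) with the total-space vanishing-cycle monodromy. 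Once this single $\Phi$ is in hand, the two order computations are immediate from Picard--Lefschetz and from projection to the base.
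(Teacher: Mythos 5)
Your proposal is correct and follows essentially the same route as the paper: both realize $\tau(C)$ as the monodromy of the two-parameter Weierstra\ss\ family $\Xcal\to\Delta$ around the unit circle, identify its image in $\Mod(X,\p X)$ with the Dehn twist $T(C)$ via Picard--Lefschetz theory, obtain commutation with $T_\pi$ from the fact that the circle action extends to $\Xcal$, and deduce infinite order by projecting to the infinite cyclic mapping class group of the base. The paper is terser---it simply invokes ``a bit of singularity theory'' where you spell out the invariant-connection/parallel-transport construction---but the underlying argument is identical.
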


\subsection{Enumerating the classes of sections}
According to Proposition \ref{prop:dichotomy}, $\MW(\pi)$ is naturally isomorphic with the infinite cyclic group
$H_1(X_0)/\ZZ\delta\cong \ZZ\delta'$

 Since the set $\pi_0(\G(\pi))$ is a $\MW(\pi)$-torsor, we can enumerate its elements by the integers. We make this concrete as follows.
The image of a section  of  $\pi: X\to B$ in  $\pi_0(\G(\pi)$) is determined by its restriction to $[-1,1]$. Since a section  avoids the singular points, it will  (via $h$) define a section 
of the trivial bundle $[-1,1]\times T^2\to T^2$  with the property that its value in $\pm 1$ avoids $T_\pm=T\times \{\pm 1\}$. So this is given by a differentiable map $f=(f',f''): [-1,1]\to T\times T$ with $f''(\pm 1)\not=1$. 
All that matters for the class of this section is the connected component  of $f$ in the space of such maps.  Since we do not impose a boundary condition on $f'$, it is in fact only the connected component of $f''$ that counts. As $T\ssm \{1\}$ contains ${-1}$ as a deformation retract, we may just as well agree that $f''(\pm 1)=-1$. So such  sections are enumerated by the homotopy classes of loops in $T$ with base point $-1$. These homotopy classes are naturally indexed by the integers  $n \in \ZZ$. Indeed,  each such homotopy class contains a unique representative of the form  
\[
f''_n: s\in [-1,1]\mapsto -e^{n(s+1)\pi\sqrt{-1}}\in T.
\]
We denote by $\sigma_n$ a section of $\pi$ that extends the section over  $[-1,1]$ defined by one for which the second component equals $f''_n$. In order that $\sigma_n$ be smooth some precaution is necessary: we should 
replace the multiplication factor $e^{n(s+1)\pi\sqrt{-1}}$ by $e^{n\varphi (s)\pi\sqrt{-1}}$, where $\varphi :[-1,1]\to [0,2]$ is constant $0$ near $-1$ and constant $2$ near $1$.  Observe that $\sigma_0$ does not meet $C$,  because 
via the homeomorphism  $X_{[-1,1]}\cong E$,  the second $T$-coordinate  is constant $-1$ on $\sigma_0$ and constant $+1$ on $C$. So if we write $[\sigma_n]$ for the image of 
$\sigma_n$ in $H_2(X, \p X)\cong H^2(X)$, then $\la \sigma_0 | c\ra =0$. 

We identified $\MW(\p\pi)$ with  $(\ZZ/2)\delta\oplus  \ZZ\delta'$ and so $\MW(\p\pi)$ has a unique element of order two. 
Proposition \ref{prop:equinodalMW} implies:

\begin{proposition}\label{prop:sectionrestriction}
The image of  $\sigma_n-\sigma_0$ in $\MW(\p\pi)$ is torsion and only depends on the parity of $n$: it is zero or of order two depending on whether $n$ is even or odd.
\end{proposition}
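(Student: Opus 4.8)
The plan is to read off $\sigma_n-\sigma_0$ as an explicit element of the infinite cyclic group $\MW(\pi)$ and then to push it through the restriction map $\MW(\pi)\to\MW(\p\pi)$, whose image was pinned down in Proposition~\ref{prop:equinodalMW}. To identify the difference class, recall from Proposition~\ref{prop:dichotomy} that $\MW(\pi)\cong H_1(X_0)/V_+$, which in the present basis is $\ZZ\delta'$; write $g:=\delta'$ for a generator. The element $\sigma_n-\sigma_0\in\MW(\pi)$ is the fiberwise translation carrying $\sigma_0$ to $\sigma_n$, and over $[-1,1]$ (via the model $h:E\cong X_{[-1,1]}$) it is the translation in the second $T$-factor by $f''_n(s)\,f''_0(s)^{-1}$, a loop that winds $n$ times around the $\delta'$-circle. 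Because the vanishing circle $T\times\{1\}$ spanning $V_+=\ZZ\delta$ is collapsed at both endpoints $s=\pm1$, any $\delta$-component of a translation becomes homotopically trivial there; hence the invariant recorded by $\pi_0(\Trans(\pi))=\MW(\pi)$ is exactly the $\delta'$-winding number, and $\sigma_n-\sigma_0=n\delta'=ng$. Equivalently, the enumeration $n\mapsto[\sigma_n]$ identifies the $\MW(\pi)$-torsor $\pi_0(\G(\pi))$ with $\ZZ$ in such a way that $g$ acts by $n\mapsto n+1$.

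Next I would invoke Proposition~\ref{prop:equinodalMW}, according to which the image of the restriction map $\MW(\pi)\to\MW(\p\pi)$ is the order-two subgroup $V_+/2V_+$ of $\MW(\p\pi)$, with kernel the even multiples $2\MW(\pi)$. Since this image is cyclic of order two and is generated by the image of $g$, the generator $g$ maps to the unique nonzero (hence order-two) element. Therefore $\sigma_n-\sigma_0=ng$ maps to $n$ times this element: it vanishes when $n$ is even and equals the order-two element when $n$ is odd. In particular its image is always torsion and depends only on the parity of $n$, which is precisely the assertion.

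I expect the only delicate point to be the identification carried out in the first step, namely matching the winding-number enumeration of sections to the group law on $\MW(\pi)\cong\ZZ\delta'$. One must confirm that a single unit of $f''$-winding is a full generator of $\MW(\pi)$---neither a proper multiple of $g$ nor entangled with the $\delta$-direction. This is exactly the statement that the collapse of $T\times\{1\}$ at $s=\pm1$ annihilates the $\delta$-component and leaves the quotient $H_1(X_0)/V_+$ that records the $\delta'$-winding; once this reduction is in place, the conclusion follows immediately from Proposition~\ref{prop:equinodalMW}.
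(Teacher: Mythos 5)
Your proof is correct and takes essentially the same route as the paper: the paper deduces the proposition directly from Proposition~\ref{prop:equinodalMW} (the restriction map $\MW(\pi)\to\MW(\p\pi)$ has image the order-two subgroup $V_+/2V_+$ and kernel $2\MW(\pi)$), with the identification $\sigma_n-\sigma_0=n\delta'$ left implicit in its winding-number enumeration of sections and only confirmed afterwards via the translation $\Phi$. Your explicit verification of that identification is the sole added detail, and it is sound.
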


The preceding suggests that we consider the diffeomorphism of $[-1,1]\times T^2$ over $[-1,1]$ onto itself defined by
\[
F: (s; u_1,u_2)\mapsto (s; u_1, u_2 e^{\varphi (s)\pi\sqrt{-1}}),
\]
where $\varphi :[-1,1]\to \RR$ is as before: constant $0$ near $-1$ and constant $2$ near $1$.
As this is the identity over neighborhood of $\{-1,1\}$, it induces a homeomorphism of $E$ onto itself
that extends as a smooth fiberwise translation of $X$. We denote that diffeomorphism by $\Phi$.

The identity $f''_n (s)e^{n\varphi (s)\pi\sqrt{-1}}=f''_{n +1}(s)$ implies that  $\Phi$ takes the class $\sigma_n$ to that of 
$\sigma_{n+1}$. Since the classes of  
$\sigma_n|\p B$ and  $\sigma_{n +1}|\p B$ are different, $\Phi|\p B$ will be nontrivial (of order two), but  $\Phi^2|\p B$ is in the identity component. We conclude:

\begin{proposition}\label{prop:}The Mordell-Weil group  $\MW(\pi)$ is infinite cyclic with a generator 
represented by $\Phi$; this generator takes the class of $\sigma_n$ to the class of  $\sigma_{n+1}$. 
The section $\sigma_0$ is disjoint from $C$.
The relative Mordell-Weil group  $\MW(\pi, \p \pi)$ is generated by the class of $\Phi^2$.
\end{proposition}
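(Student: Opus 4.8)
The statement collects four assertions; the first three have essentially been assembled in the preceding discussion, and the genuinely new computation is the last one. The plan is to deduce the $\MW(\pi)$-statements from the torsor structure of $\pi_0(\G(\pi))$ together with the enumeration of sections, and to obtain the relative statement from the exact sequence of Proposition \ref{prop:equinodalMW}.

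First I would recall that $\MW(\pi)$ is infinite cyclic by Proposition \ref{prop:dichotomy}, which identifies it with $H_1(X_0)/V_+\cong\ZZ\delta'$. To see that $\Phi$ is a generator, I would use that $\pi_0(\G(\pi))$ is a torsor under $\MW(\pi)$ and that the enumeration of sections identifies this torsor bijectively with $\ZZ$ via $n\mapsto[\sigma_n]$; since $\Phi$ carries $[\sigma_n]$ to $[\sigma_{n+1}]$ (established just above), the orbit of $[\sigma_0]$ under $\langle\Phi\rangle$ is all of $\pi_0(\G(\pi))$. Because the action is simply transitive, this forces $\langle\Phi\rangle=\MW(\pi)$, so $\Phi$ generates and has infinite order. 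The disjointness of $\sigma_0$ from $C$ is read off directly from the model $X_{[-1,1]}\cong E$, in which the second $T$-coordinate is constantly $-1$ on $\sigma_0$ and constantly $+1$ on $C$.

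For the relative group I would invoke Proposition \ref{prop:equinodalMW}, which presents the sequence $\MW(\pi,\p\pi)\to\MW(\pi)\to\MW(\p\pi)$ as
\[
H_2(C)\xrightarrow{\,2\times\,}H_1(X_0)/V_+\to H_1(X_0)/2V_+ .
\]
Here the first map is injective with image the index-two subgroup $2\bigl(H_1(X_0)/V_+\bigr)$. Transporting this through the identification of $H_1(X_0)/V_+$ with $\MW(\pi)$ (for which $\Phi$ is a generator), the map $\MW(\pi,\p\pi)\to\MW(\pi)$ is injective with image $2\,\MW(\pi)=\langle\Phi^2\rangle$. On the other hand $\Phi$ maps to an element of order two in $\MW(\p\pi)$ by Proposition \ref{prop:sectionrestriction} (the class $\sigma_1-\sigma_0$ being the nontrivial $2$-torsion element), so $\Phi^2$ lies in $\ker(\MW(\pi)\to\MW(\p\pi))$, which by exactness equals that image. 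Thus $\Phi^2$ has a unique preimage in $\MW(\pi,\p\pi)$, and it is a generator; concretely, since $\Phi^2|\p B$ is in the identity component, $\Phi^2$ can be made the identity near $\p B$ by an isotopy and so directly represents a class generating $\MW(\pi,\p\pi)$.

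The only point requiring care is to match the geometric generator $\Phi$ with the abstract generator of $H_1(X_0)/V_+$ used in Proposition \ref{prop:equinodalMW}, so that the index-two inclusion is realized precisely by $\Phi\mapsto\Phi^2$ and not by some other even multiple; this is exactly what the torsor computation of the previous paragraph guarantees, since it pins down $\Phi$ as the shift $n\mapsto n+1$. Everything else is bookkeeping with the already-established exact sequences, so I do not anticipate a substantive obstacle.
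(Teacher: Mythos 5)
Your proposal is correct and follows essentially the same route as the paper: the paper's own (very terse) justification consists of noting that $\Phi$ shifts $[\sigma_n]$ to $[\sigma_{n+1}]$ and that $\Phi|\p B$ has order two while $\Phi^2|\p B$ is isotopically trivial, with the torsor structure on $\pi_0(\G(\pi))$, the disjointness observation for $\sigma_0$ and $C$, and the exact sequence of Proposition \ref{prop:equinodalMW} (injectivity with index-two image) doing exactly the work you make explicit. Your write-up simply spells out these implicit steps, including the harmless point that the identification of the image with $2\,\MW(\pi)=\langle\Phi^2\rangle$ is independent of which generator of $H_1(X_0)/V_+$ corresponds to $\Phi$.
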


We put  $C_n:= \Phi^n (C)$. Since $\sigma_n=\Phi^n\sigma_0$, it follows that $C_n$ and $\sigma_n$ are disjoint.
The following corollary shows among other things that $\sigma_n$ is the only section with that property. 

\begin{corollary}\label{cor:}
The class of  $C_n= \Phi^n (C)$ is equal to  $c+ne$. In particular, $\sigma_m\cdot C_n=\sigma_{m-n}\cdot C=m-n$.
A basis of  $H_2(X, \p X)\cong H^2(X)$ is given by the classes of the sections $\sigma_0$ and $\sigma_1$.
\end{corollary}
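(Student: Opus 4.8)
The plan is to reduce the entire corollary to the computation of the single automorphism $\Phi_*$ on $H_2(X)=\ZZ c\oplus\ZZ e$, and then to read off the intersection numbers and the basis assertion by Poincar\'e--Lefschetz duality. Since $\Phi$ is a fiberwise translation it fixes the fiber class, $\Phi_*e=e$, and by Proposition \ref{thm:MWcohomologyI} it acts unipotently on $H^2(X)$, hence on $H_2(X)$; as it preserves the nondegenerate form induced on the rank-one quotient $H_2(X)/\ZZ e=\ZZ\bar c$ (recall $\bar c\cdot\bar c=-2$ from the free basis $\{c,e\}$ recorded above), it must act there as the identity. Therefore $\Phi_*c=c+\beta e$ for some $\beta\in\ZZ$, and iterating with $\Phi_*e=e$ gives $[C_n]=\Phi^n_*c=c+n\beta e$. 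The first assertion of the corollary is thus equivalent to the claim $\beta=1$.

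The heart of the matter is to pin down $\beta$. Geometrically this is transparent: $C_1=\Phi(C)$ and $\sigma_1=\Phi(\sigma_0)$ are disjoint, so $0=C_1\cdot\sigma_1=(c+\beta e)\cdot\sigma_1=(C\cdot\sigma_1)+\beta(e\cdot\sigma_1)$, and since a section meets a fiber once this determines $\beta$ from $C\cdot\sigma_1$; in the Weierstra\ss\ model of \S\ref{subsect:weierstrass}, the section $\sigma_1$ (whose second $T$-coordinate is $-e^{(s+1)\pi\sqrt{-1}}$) meets $C$ (which sits at second coordinate $+1$) in the single transverse point over $s=0$, so $C\cdot\sigma_1=\pm1$ and hence $|\beta|=1$. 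As a homological cross-check that fixes both value and sign, Propositions \ref{prop:MWpair} and \ref{prop:equinodalMW} identify the relative-to-absolute map $\MW(\pi,\p\pi)\to\MW(\pi)$ with the intersection map $j:H_2(X)/\ZZ e\to H^2(X)^e$ carrying the generator $c$ to $2\gamma_\Phi$; pairing $j(c)=2\gamma_\Phi$ with $c$ and using $c\cdot c=-2$ yields $\langle\gamma_\Phi\,|\,c\rangle=-1$, which is exactly $\sigma_1\cdot C$, so $\beta=1$ and $[C_n]=c+ne$. This coefficient computation, together with consistent orientation bookkeeping, is the one genuinely delicate point of the proof.

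With $[C_n]=c+ne$ in hand the intersection numbers follow from invariance of the pairing $H_2(X)\times H_2(X,\p X)\to\ZZ$ under the diffeomorphism $\Phi$: factoring $\Phi^n_*$ out of both arguments and using $[\sigma_m]=\Phi^n_*[\sigma_{m-n}]$,
\[
\sigma_m\cdot C_n=\langle\Phi^n_*c,\Phi^n_*[\sigma_{m-n}]\rangle=\sigma_{m-n}\cdot C,
\]
and since $[\sigma_k]=[\sigma_0]+k\gamma_\Phi$ with $\sigma_0\cdot C=0$, the function $k\mapsto\sigma_k\cdot C$ is linear of slope $\pm1$, giving $\sigma_m\cdot C_n=m-n$ (the sign fixed as above, so that this number vanishes precisely when $m=n$, recovering that $\sigma_n$ is the unique section disjoint from $C_n$). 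Finally, for the basis statement I would invoke Poincar\'e--Lefschetz duality: because $H_1(X)\cong\ZZ$ is torsion-free (Proposition \ref{prop:dichotomy}), one has $H^2(X)\cong\Hom(H_2(X),\ZZ)$ and the intersection pairing $H_2(X,\p X)\times H_2(X)\to\ZZ$ is unimodular. Against the basis $\{e,c\}$ of $H_2(X)$, the classes $[\sigma_0],[\sigma_1]\in H_2(X,\p X)$ have Gram matrix $\left(\begin{smallmatrix}1&1\\0&\pm1\end{smallmatrix}\right)$ (using $e\cdot\sigma_i=1$, $\sigma_0\cdot C=0$, $\sigma_1\cdot C=\pm1$), whose determinant is $\pm1$; hence $\{[\sigma_0],[\sigma_1]\}$ is a $\ZZ$-basis of $H_2(X,\p X)\cong H^2(X)$, as claimed.
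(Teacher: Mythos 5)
Your proposal is correct in substance, and its key step runs along a genuinely different line from the paper's. The paper obtains $\Phi_*(c)=c+e$ in one stroke by a degree argument: it projects $E$ onto the quotient $\check{T}^2$ of $T^2$ obtained by collapsing $T\times\{1\}$; this projection kills $C$ and restricts to the central fiber with degree one, and since $F(C)$ still maps with degree one, the $e$-coefficient of $\Phi_*(c)$ is $1$. You instead argue structurally: unipotency of the $\MW(\pi)$-action on $H^2(X)$ (Proposition \ref{thm:MWcohomologyI}) plus invariance of the induced form on the rank-one quotient $H_2(X)/\ZZ e$ forces $\Phi_*(c)=c+\beta e$, and then the disjointness $0=C_1\cdot\sigma_1=C\cdot\sigma_1+\beta$ together with the transverse one-point intersection $C\cap\sigma_1$ pins down $|\beta|=1$. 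This is a legitimate alternative, and it even makes explicit a point the paper leaves tacit, namely why the $c$-coefficient of $\Phi_*(c)$ equals $1$. Your treatment of $\sigma_m\cdot C_n$ (diffeomorphism invariance of the pairing applied to $\Phi^n$) and of the basis statement (the evaluation matrix of $\sigma_0,\sigma_1$ against $(e,c)$ has determinant $\pm 1$, the pairing being unimodular because $H_1(X)$ is torsion-free) coincides with the paper's.

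One caveat: your ``homological cross-check that fixes both value and sign'' does not do what it claims. Propositions \ref{prop:MWpair} and \ref{prop:equinodalMW} identify $\MW(\pi,\p\pi)\to\MW(\pi)$ with a map carrying a generator to twice a generator; they do not tell you whether the generator $\bar c$ of $H_2(X)/\ZZ e$ corresponds to $[\Phi^{2}]$ or to $[\Phi^{-2}]$, so writing $j(c)=+2\gamma_\Phi$ rather than $-2\gamma_\Phi$ is a normalization, not a deduction. Moreover, the sign you extract is inconsistent with your stated conclusion: $\sigma_1\cdot C=-1$ yields $\sigma_m\cdot C_n=n-m$, not $m-n$. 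You are in good company here, since the paper has the identical inconsistency (its proof computes $\sigma_1\cdot c=-1$ while the corollary asserts $\sigma_1\cdot C=+1$); the sign depends on orientation conventions for $C$ and on the chosen direction of $\Phi$, and the invariant content --- that $|\sigma_m\cdot C_n|=|m-n|$, so that $\sigma_n$ is the unique section class disjoint from $C_n$, together with the basis claim --- is what your argument, like the paper's, actually establishes.
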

\begin{proof}
By construction, $\Phi$ induces in $E$  the map defined by $F$. Recall that the oriented topological $2$-sphere 
$S$ is the image of $ [-1,1]\times T^1\times\{1\}$ in $E$. 
There is a projection of $E$ onto the quotient $\check{T^2}$ of $T^2$ that is obtained by collapsing  $T\times\{1\}$ to a point. Under this projection, $S$ is mapped to a point, but the central fiber $T^2$ maps  with degree one on 
$\check{T^2}$.  Since $F_*(S)$ also maps to $\check{T^2}$ with degree one, it follows that $\Phi_*(c)=c+e$. If we combine this with the fact that $\Phi_*$ fixes $e$, we find that  
$[\Phi^n(C)]=(\Phi_*)^n(c)=c+ne$.

We check the second assertion by evaluating $\sigma_0$  and $\sigma_1$ on the basis $(e,c)$ of 
$H_2(X)$ and prove that the resulting $2\times 2$-matrix has absolute determinant $1$: indeed,  
$\sigma_0 \cdot e=1$, $\sigma_0 \cdot c=0$ and $\sigma_1\cdot e=1$, 
$\sigma_1\cdot c=\Phi(\sigma_0)\cdot c=\sigma_0\cdot  \Phi_*^{-1}(c)=\sigma_0\cdot(c-e)=-1$. 
\end{proof}

Note the classes  $[C_n]=c+ne$ yield up to sign all elements in $H_2(X)$ of self-intersection number $-2$.
So the Mordell-Weil group $\MW(\pi)$ acts simply transitively  on this collection of isotopy classes of embedded
$2$-spheres in $X$ (when we regard them as unoriented submanifolds) as well as on the set  $\pi_0\G(\pi)$ of 
isotopy classes of sections. The group $\MW(\pi, \p \pi)$ has therefore two orbits in these sets. 

\begin{remark}[Relation to degenerations]
\label{rem:I2observation}
This decomposition into two orbits can be understood in terms a degeneration of $\pi$. For this purpose  it is convenient  to  represent the vanishing cycles $T_\pm$ in $T^2$ not by the same circle $T\times\{1\}$ as we did above, 
but by two isotopic ones, namely by taking  $T_\pm=T\times\{\pm 1\}$.  If we  then let the critical values 
$\{-1,1\}$ of $\pi$ then both move to $0$ (thereby shrinking the interval that connects them to a singleton), we   
obtain an elliptic fibration $\pi' : X'\to B$ whose only singular fiber  $X'_0$  is  topologically obtained from $T^2$ by contracting both $T\times \{1\}$ and  $T\times \{-1\}$. (This limiting process can  in fact be carried out in the holomorphic category;  the  central singular  fiber is then of Kodaira type $\Itwo$: it has two irreducible components,  each being a Riemann sphere with self-intersection number $-2$ 
and meeting the other transversally in two points.) 

The set of sections $\G (\pi')$ that meet a given connected component of the smooth part of $X'_0$ make up a connected set and so  $\pi_0\G (\pi')$ 
has only two elements. Indeed, $\MW(\pi')$ is of order two with the nontrivial element exchanging  these two.
The natural map $\MW(\pi')\to \MW(\p\pi')=\MW(\p \pi)$ is injective with image the torsion subgroup of $\MW(\p \pi)$ and $\MW(\pi', \p\pi')$ is trivial.

If we make  the above choices for $T_\pm$, then these 2-spheres also live in our $E$ 
as topological spheres  $S_\pm$, namely as the images of the two maps 
\[
(s,t)\in [-1,1]\times T^1\mapsto (\pm s, \pm e^{\pi\sqrt{-1}s}, t)\in [-1,1]\times T^2.
\]
When suitably oriented, the image of $S_+$ resp.\ $S_-$  in $X$ under the embedding $E\hookrightarrow X$ is 
isotopic  to $C$ resp.\ $-C_{-1}$ (whose class is $e-c$). The two $\MW(\pi, \p \pi)$-orbits in $\pi_0\G(\pi)$ 
degenerate into the two elements of  $\pi_0\G(\pi')$.
\end{remark}

In Corollary \ref{cor:2Dehn} we noted that the smoothly embedded 2-sphere $C$  (with self-intersection $-2$)
defines $\tau(C)\in  \Mod(\pi, \p\pi)$. It is clear that  
\[\tau (C_n)=\tau (\Phi^nC)= \Phi^n\tau(C)\Phi^{-n}.\] 
Since each $\tau(C_n )$ lifts the same  diffeomorphism of $B$,  any two such `differ' by  an element  of  $\MW(\pi, \p \pi)$.  In particular, $\tau(C_{1})\tau(C)^{-1}$  
defines an element  of $\MW(\pi, \p \pi)$ and hence represents an even power of $\Phi$.
\\

In what follows we make use of a ``variation homomorphism'', defined as 
follows.  Let $(X,Y)$ be a topological pair with $Y\subset X$ closed and $h:X\to X$ a homeomorphism  that is  the identity on $Y$. If $z$ is  a simplicial $k$-chain on $X$ such that $\p z$ has its support on  $Y$, then $h_*z-z$ is a  $k$-cycle on $X$. Thus $h_*-1$ induces what is called the \emph{variation map}
\[
\var (h): H_\pt (X, Y)\to  H_\pt(X).
\] 
It only depends on the relative homotopy class of $h$. One justification for this notion is that if $j :(X,Y)\subset (X',Y')$ is an embedding which 
induces an isomorphism on relative homology, then if $h': X'\to X'$ is the extension of $h$ to the identity, 
then $\var(h')$ is expressible in terms of $\var (h)$ as the composite
\[
\var (h'):  H_\pt (X', Y')\xleftarrow{j_* \cong} H_\pt(X, Y)\xrightarrow{\var (h)} H_\pt (X)\xrightarrow{j_*}H_\pt(X').
\]
We further note that $\var$ satisfies the  cocycle condition: if $h_1,h_2:(X,Y)\to  (X,Y)$ are as above,  then 
$\var (h_1h_2)=\var (h_1)h_{2*} +\var(h_2)$.

\begin{proposition}\label{prop:spheres and sections}
The image of $\tau(C_{1})\tau(C)^{-1}$ in $\MW(\pi, \p \pi)$ is that of $\Phi^2$  (which we recall defines the generator 
of $\MW(\pi, \p \pi)$) and its associated variation homomorphism equals
\[
x\in H_2(X, \p X)\mapsto \la x, e\ra c- \la x,c\ra e +\la x, e\ra e\in H_2(X), 
\]
hence comes from the Eichler transformation $E(e\wedge c)$. 
The class of $\sigma_{2m}-\sigma_0$ in $H_2(X)$ (considered as the image of 
$\var(\Phi^{2m})[\sigma_0]$)  is  $mc+m^2e$.
\end{proposition}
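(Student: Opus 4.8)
The plan is to express both Dehn twists through their Picard--Lefschetz variation maps and then combine them with the cocycle identity $\var(h_1h_2)=\var(h_1)h_{2*}+\var(h_2)$ recorded above. Since $\tau(C)$ lifts the $2$-dimensional Dehn twist $T(C)$ realised in \S\ref{subsect:weierstrass} as the monodromy of the $A_1$-degeneration $\Xcal\to\Delta$ (Corollary \ref{cor:2Dehn}), the Picard--Lefschetz formula gives
\[
\var(\tau(C))(x)=(x\cdot c)\,c\qquad(x\in H_2(X,\partial X)),
\]
the normalisation being forced by $c\cdot c=-2$, so that $\tau(C)_*c=c+(c\cdot c)c=-c$ is the reflection in $c$. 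The same reasoning applies to $\tau(C_1)$, whose sphere has class $c_1=c+e$ with $c_1\cdot c_1=-2$, giving $\var(\tau(C_1))(x)=(x\cdot c_1)c_1$. Because $T(C)$ has order two in $\Mod(X,\partial X)$, the map $\tau(C)_*$ on $H_2(X,\partial X)$ is an involution, so $(\tau(C)^{-1})_*=\tau(C)_*$; feeding $\tau(C)\tau(C)^{-1}=\mathrm{id}$ into the cocycle identity then gives $\var(\tau(C)^{-1})(x)=(x\cdot c)c$ as well.

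Next I would compute $\var(g)$ for $g:=\tau(C_1)\tau(C)^{-1}$ from the cocycle identity, substituting these formulas and using $c\cdot e=e\cdot e=0$. A short collapse of the resulting terms yields
\[
\var(g)(x)=(x\cdot e)\,c-(x\cdot c)\,e+(x\cdot e)\,e,
\]
which is precisely $E(e\wedge c)(x)-x$; this proves the variation assertion, and applying $g_*-1=\var(g)\circ i$ on absolute classes shows $g_*=E(e\wedge c)$ on $H_2(X)$.

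To pin down $g=\Phi^2$, I would evaluate the formula on the section class $[\sigma_0]\in H_2(X,\partial X)$, where $\sigma_0\cdot e=1$ and $\sigma_0\cdot c=0$ (the disjointness of $\sigma_0$ and $C$ noted above), obtaining $\var(g)[\sigma_0]=c+e$. A priori $g$ is some power $\Phi^{2k}$ of the generator $\Phi$ of the infinite cyclic group $\MW(\pi,\partial\pi)$, so $g(\sigma_0)=\sigma_{2k}$ and $\var(g)[\sigma_0]=[\sigma_{2k}]-[\sigma_0]$. Using $\sigma_n\cdot C=\sigma_0\cdot\Phi^{-n}C=\sigma_0\cdot(c-ne)=-n$, pairing with $c$ gives $([\sigma_{2k}]-[\sigma_0])\cdot c=-2k$, while $(c+e)\cdot c=-2$, so $k=1$ and $g=\Phi^2$. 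For the final identity I would iterate the cocycle rule, $\var(\Phi^{2m})=\sum_{j=0}^{m-1}\var(\Phi^2)\,(\Phi^2)^j_*$, observe $(\Phi^2)^j_*[\sigma_0]=[\sigma_{2j}]$, and evaluate each summand by the Eichler formula to get $c+(2j+1)e$; summing the odd numbers $1,3,\dots,2m-1$ gives $\var(\Phi^{2m})[\sigma_0]=mc+m^2e$, which is the class of $\sigma_{2m}-\sigma_0$.

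The step I expect to require the most care is not the algebra but the consistent bookkeeping of signs and orientations: the Picard--Lefschetz sign, the convention for the pairing $H_2(X,\partial X)\times H_2(X)\to\ZZ$, and the sign of $\sigma_n\cdot C$ all enter the final formulas and the value $k=1$. Once $\var(\tau(C))$ is correctly normalised against $c\cdot c=-2$, the remaining manipulations are a mechanical application of the cocycle identity together with the relative/absolute comparison $g_*-1=\var(g)\circ i$.
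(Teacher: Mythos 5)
Your proposal is correct and follows essentially the same route as the paper: both compute $\var(\tau(C_{1})\tau(C)^{-1})$ by combining the Picard--Lefschetz formula for the two sphere classes $c$ and $c+e$ with the cocycle identity for variations, and then recognize the result as the variation of the Eichler transformation $E(e\wedge c)$. The only cosmetic differences are that the paper pins down the power of $\Phi$ by noting $c\mapsto c+2e$ versus $\Phi^{n}_*(c)=c+ne$ rather than by pairing $\var(g)[\sigma_0]$ with $c$, and it gets the final formula $mc+m^2e$ from $\Phi^{2m}=E(me\wedge c)$ in one line instead of iterating the cocycle identity and summing the arithmetic series.
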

\begin{proof}
We first determine the variation homomorphism of $T(C_{1})^{-1}T(C)$. For $x\in H_2(X, \p X)$: 
\begin{multline*}
\var(T(C_{1})T(C)^{-1})(x)=\var(T(C_{1})T(C)_*(x)+ \var(T(C))(x)\\
=\la x+\la x, c\ra c, c+e\ra (c+e)+ \la x, c\ra c
=\la x, e\ra c- \la x,c\ra e +\la x, e\ra e.
\end{multline*}
This is indeed the variation of the Eichler transformation defined by $e\wedge c$.
This  shows in particular that $\tau(C_{1})\tau(C)^{-1}$ takes $c$ to $c+2e$. Since 
$\Phi^n_*$ takes $c$ to $c+ne$, it follows that $\tau(C_{1})\tau(C)^{-1}=\Phi^2$.

Recalling that  $\Phi^{2m}$ acts as the Eichler transformation defined by $me\wedge c$, the last assertion then follows from
\[
\var(\Phi^{2m})[\sigma_0]= \la \sigma_0, me\ra c- \la \sigma_0,c\ra me +\la \sigma_0, me\ra me= mc+m^2e. \qedhere
\]
\end{proof}

Remark \ref{rem:I2observation} above suggests the following alternate way of looking at 
$\Mod(\pi, \p\pi)$ and its map to $\Mod(X, \p X)$.

\begin{corollary}\label{cor:MWpresentation}
A presentation of  $\Mod(\pi, \p\pi)$  has as  generators  
$\tau(C)$ and $\tau(C_1)$ and  the relation 
$\tau(C)^2=\tau(C_1)^2$,  so that $\Mod(\pi, \p\pi)$ is a central extension
\[
1\to\mathbb{Z}\to \Mod(\pi, \p\pi)\to (\ZZ/2) * (\ZZ/2) \to 1,
\]
the center being generated by $\tau(C)^2$ (here $(\ZZ/2) *(\ZZ/2)$ is the infinite dihedral group).

The relative Mordell-Weil group $\MW(\pi, \p \pi)$  is the infinite cyclic subgroup of $\Mod(\pi, \p\pi)$ generated by  the basic translation $F(C):=\tau(C)^{-1}\tau(C_{1})$.  In particular, 
\[\Mod(\pi, \p\pi)\cong \MW(\pi, \p \pi)\rtimes \tau(C)^\ZZ\] 
with $\tau(C)$ acting by inversion: 
\[ 
\tau(C)F(C)\tau(C)^{-1}=F(C)^{-1}.
\]

The image of $\tau(C)$ in the relative mapping class group $\Mod(X, \p X)$ is the Dehn twist $T(C)$ and hence of order 2. Indeed, $\Mod(X, \p X)$ is  the quotient group of $\Mod(\pi, \p\pi)$ defined by this relation and hence  is the infinite dihedral group $\MW(\pi, \p \pi)\rtimes \ZZ/2$).
  \hfill $\square$
\end{corollary}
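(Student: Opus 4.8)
The plan is to study $\Mod(\pi,\p\pi)$ through the homomorphism $p\colon\Mod(\pi,\p\pi)\to\Mod(B,\p B\cup\{-1,1\})$ that sends a fiber-preserving diffeomorphism to the diffeomorphism it induces on the base. The target is the mapping class group of the disk rel boundary with the two discriminant points allowed to be interchanged, i.e.\ the $2$-strand disk braid group; as recalled above it is infinite cyclic, generated by the simple braid $\tau$. By construction $\tau(C)$ is a lift of $\tau$, and $\tau(C_1)$ is the analogous lift associated to the $(-2)$-sphere $C_1$, so $p(\tau(C))=p(\tau(C_1))=\tau$. Hence $p$ is surjective and $F(C)=\tau(C)^{-1}\tau(C_1)$ lies in $\ker p$; this already reduces the presentation to understanding $\ker p$ and the conjugation action on it.

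The crux is the identification $\ker p=\MW(\pi,\p\pi)$. Any class in $\ker p$ is represented by a diffeomorphism that is the identity near $\p X$ and covers the identity of $B$, hence preserves each fiber; it therefore induces a global automorphism of the local system $R^1\pi_*\ZZ$ on $B\ssm\{-1,1\}$ that is the identity over $\p B$, and since $B\ssm\{-1,1\}$ is connected this automorphism is the identity on $H_1$ of every fiber. The families Earle--Eells theorem (Proposition \ref{prop:trans2}), whose proof already handles the nodal fibers, then shows that such a diffeomorphism is fiberwise isotopic rel $\p X$ to a fiberwise translation. Since conversely every fiberwise translation covers the identity of $B$, we get $\ker p=\MW(\pi,\p\pi)$, which by Proposition \ref{prop:} is infinite cyclic generated by $\Phi^2$. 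I expect this to be the main obstacle: it is the only genuinely topological (rather than group-theoretic) input, and it is where the equinodal hypothesis and the Earle--Eells machinery enter.

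Next I would pin down the conjugation action of $\tau(C)$ on the kernel. Since $\MW(\pi,\p\pi)\cong\langle\Phi^2\rangle$ acts faithfully on $H_2(X)$ (the powers $\Phi^{2m}$ act by $c\mapsto c+2me$, which are distinct), it suffices to verify $\tau(C)\,\Phi^2\,\tau(C)^{-1}=\Phi^{-2}$ on homology. Here $\tau(C)_*$ is the Picard--Lefschetz transformation $x\mapsto x+(x\cdot c)c$, which fixes $e$, sends $c\mapsto-c$, and squares to the identity; a one-line computation then gives $\tau(C)_*\,\Phi^2_*\,\tau(C)_*\colon e\mapsto e,\ c\mapsto c-2e$, which is exactly $\Phi^{-2}_*$. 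By faithfulness this yields $\tau(C)\,\Phi^2\,\tau(C)^{-1}=\Phi^{-2}$ in $\MW(\pi,\p\pi)$; in particular $F(C)=\tau(C)^{-1}\tau(C_1)=\tau(C)^{-1}\Phi^2\tau(C)$ equals $\Phi^{-2}$, a generator, and $\tau(C)F(C)\tau(C)^{-1}=F(C)^{-1}$.

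Finally I would assemble the extension. Surjectivity of $p$ onto $\langle\tau\rangle\cong\ZZ$, together with $\ker p=\langle F(C)\rangle\cong\ZZ$ and the fact that $\tau(C)$ maps to a generator of the quotient (so $\langle\tau(C)\rangle\cap\ker p=1$), give a split short exact sequence and hence $\Mod(\pi,\p\pi)\cong\MW(\pi,\p\pi)\rtimes\tau(C)^{\ZZ}$ with $\tau(C)$ acting by inversion. Writing $a=\tau(C)$ and $t=F(C)$, the relation $ata^{-1}=t^{-1}$ becomes, after $b:=at=\tau(C_1)$, the single relation $a^2=b^2$; thus $\Mod(\pi,\p\pi)=\langle\tau(C),\tau(C_1)\mid\tau(C)^2=\tau(C_1)^2\rangle$, the element $\tau(C)^2$ is central, and the quotient by it is $(\ZZ/2)*(\ZZ/2)$. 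For the last assertion, the natural map to $\Mod(X,\p X)$ carries $\tau(C)$ to $T(C)$, which has order $2$ by Corollary \ref{cor:2Dehn}, so the central $\langle\tau(C)^2\rangle$ lies in its kernel; the homological computation above shows the kernel is exactly this central $\ZZ$ (an element $F(C)^m\tau(C)^k$ acting trivially on $H_2(X)$ forces $k$ even and $m=0$), so the image is the infinite dihedral group $\MW(\pi,\p\pi)\rtimes\ZZ/2$. The one point I would treat with care is the surjectivity of $\Mod(\pi,\p\pi)\to\Mod(X,\p X)$ needed to identify this image with all of $\Mod(X,\p X)$, which I would verify directly for this small manifold (or else read the claim as a statement about the image).
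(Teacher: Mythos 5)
Your proof is correct and follows essentially the same route the paper intends for this corollary, which it leaves unproved as an assembly of Proposition \ref{prop:spheres and sections}, Corollary \ref{cor:2Dehn}, and the remark that any two lifts of the simple braid ``differ'' by an element of $\MW(\pi,\p\pi)$ --- i.e.\ exactly your split exact sequence over the base braid group $\ZZ$ with kernel $\MW(\pi,\p\pi)$, together with the inversion relation (which you verify homologically via faithfulness, where the paper instead uses $\tau(C_1)=\Phi\tau(C)\Phi^{-1}=\Phi^2\tau(C)$; a negligible variation). The two points you flag --- the relative (rel $\p X$) form of the families Earle--Eells result (Proposition \ref{prop:trans2}) needed to identify $\ker p$ with $\MW(\pi,\p\pi)$, and the surjectivity of $\Mod(\pi,\p\pi)\to\Mod(X,\p X)$ in the final assertion --- are precisely the points the paper passes over in silence, so your treatment is, if anything, the more careful one.
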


The groups $\Mod(\pi)$, $\Mod(\pi, \p\pi)$, $\Mod (X, \p X)$ that appear here are of course naturally associated to $\pi$, but $C$ is not privileged over any of  
the spheres $\pm C_n$ with $n\in \ZZ$. On the other hand,  as Proposition \ref{prop:spheres and sections} shows, with the  choice of $C$ comes the choice of a component of $\G(\pi)$ (containing a section disjoint with $C$): 
there is a natural bijection between the collection of (unoriented) spheres $\{C_n\}_{n\in \ZZ}$ and $\pi_0(\G(\pi))$.

\section{Small support realizations of elements of the Mordell-Weil group}
\label{section:smallsup}

Our goal in this section is to prove Theorem \ref{thm:smallsupport}, that each element of $e^\perp/\ZZ e$ has a representative in $\Diff(M)$ 
with small support, and of a very special form.  

\subsection{Equinodal arcs} We shall see that the `equinodal' fibration studied in \S\ref{section:2node} appears in $\pi_d : M_d\to \PP^1$ in many ways.  In order make this a bit more formal, let us first introduce a relevant notion.
 
 \begin{definition}[Equinodal arc]\label{def:}
 
A \emph{equinodal arc} in $\PP^1$ (with respect to the genus one fibration $\pi_d$) is  a smoothly  (embedded) arc $\g\subset  \PP^1$  whose relative interior  does not meet any critical value and over whose end points there are nodal fibers that define the same vanishing homology in a fiber over an interior point of $\g$.
\end{definition}
 
For such an equinodal arc $\g$ there exists a closed regular neighborhood $B_\g$ of $\g$  such that $\pi_d|B_\g$ is diffeomorphic to the genus one fibration investigated  in Section \ref{section:2node}.  
Corollary \ref{cor:MWpresentation} shows that then the part 
$\MW_\gamma(\pi_d)$ of the Mordell-Weil group $\MW(\pi_d)$ with support in $B_\g$ is infinite cyclic with a 
generator $F_\gamma$. It also tells us that there exists  a smooth oriented $2$-sphere $C_\g$ `suspended' over $\gamma$ with $C_\g\cdot C_\g=-2$ such that the associated Dehn twist $T(C_\g)\in \Mod(M_g)$ has a natural lift $\tau(C_\g)\in \Mod(\pi_d)$ with the latter is represented by  a diffeomorphism of $M_d$ over a diffeomorphism of $\PP^1$ that has its support in  $B_\g$  and is inside $B_\g$ the  basic (positive) braid that exchanges the endpoints of $\g$. They satisfy $\tau (C_\g)F_\g \tau (C_\g)^{-1}=F_\g^{-1}$. While  $\tau(C_\g)^2$  is nontrivial (for it acts on $\PP^1$ as a Dehn twist along $\p B_\g$), its image $T(C_\g)^2$ in $\Mod(M_d)$ is  trivial  
(even in the group of mapping classes  of $M_d$ that have their support over  $B_\g$).
 
 If $c_\g=[C_\g]\in \Lambda_d$, then $c_\g\cdot e=0$ and the above mapping classes act on $\Lambda_d$ as  an orthogonal reflection resp.\ by an Eichler transformation:
\begin{gather*}
\tau(C_\g)_*: x\mapsto  x+(c_\g\cdot x)c_\g\\
F_{\gamma *}=E(e\wedge c_\g): x\mapsto  x +(x\cdot e)c_\g-(x\cdot c_\g)e + (x\cdot e) e,
\end{gather*}
Let us refer to the image of $c_\g$ in $\Lambda_d(e)$, resp.\  $E(e\wedge c_\g$ in $\G_{d,e}$ as resp.\ an \emph{equinodal $(-2)$-vector}, 
an \emph{equinodal Eichler transformation} and to the reflection $\tau(C_\g)_*$  as  acting in $\Lambda_d(e)$ an \emph{equinodal reflection}.

\begin{theorem}\label{thm:equinodal}
These equinodal objects are maximal in the following sense:
\begin{enumerate}
\item[(i)] all the $(-2)$-vectors in $\Lambda_d(e)$ are equinodal and generate $\Lambda_d(e)$, or equivalently, the equinodal Eichler transformations generate  the  unipotent radical $\Lambda_d(e)$ of  $\G_{d,e}$,
\item[(ii)] the equinodal reflections  in $\Orth(\Lambda_d(e))^+$ make up a single conjugacy class and generate $\Orth(\Lambda_d(e))^+$.
\end{enumerate}
\end{theorem}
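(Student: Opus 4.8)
The plan is to isolate a lattice-theoretic skeleton and then add a geometric realization layer, treating $d=1$ (where $\Lambda_d(e)\cong\Eb_8(-1)$ is negative definite) and $d\ge2$ (where $\Lambda_d(e)\cong d\Eb_8(-1)\perp(2d-2)\Ub$ contains $2\Ub$) in parallel.

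First I would collect the lattice-theoretic facts. As $\Lambda_d(e)$ is even, every $(-2)$-vector is primitive, so Eichler's theorem (applicable since $2\Ub\subseteq\Lambda_d(e)$ for $d\ge2$, and classical for $\Eb_8$ when $d=1$) shows that $\Orth(\Lambda_d(e))^+$ acts transitively on the set of $(-2)$-vectors. Combined with $g\,s_c\,g^{-1}=s_{g(c)}$ this gives at once the single conjugacy class asserted in (ii). For the generation statement in (ii) the mechanism is that $(-2)$-reflections preserve the spinor orientation and that, whenever $f$ is isotropic and $a\perp f$ with $a^2=-2$, the vector $a+f$ is again a $(-2)$-vector and the Eichler transvection factors as $E(f\wedge a)=s_{a+f}\,s_a$. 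Since Eichler's theorem, together with additivity of transvections in the second slot, expresses the special spinor kernel $\SO(\Lambda_d(e))^+$ in terms of the root-transvections $E(f\wedge a)$ with $a^2=-2$, the $(-2)$-reflections generate a subgroup containing $\SO(\Lambda_d(e))^+$, and adjoining one reflection (of determinant $-1$) fills out all of $\Orth(\Lambda_d(e))^+$. The remaining lattice assertion in (i), that the $(-2)$-vectors generate $\Lambda_d(e)$, is a direct computation: $\Eb_8$ is a root lattice, and inside $\Ub\perp\Ub$ one writes a hyperbolic basis as an integral combination of roots.

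The geometric layer is what promotes all $(-2)$-reflections to \emph{equinodal} reflections. I would begin by producing one equinodal root: the $\mathrm I_2$-type degeneration of \S\ref{section:2node} embeds in $\pi_d$ --- two nodal fibers sharing a vanishing cycle always occur (cf.\ Remark \ref{rem:I2observation}) --- yielding an equinodal arc $\gamma_0$, a $(-2)$-sphere $C_{\gamma_0}$, and a fiber-preserving $\tau(C_{\gamma_0})$ inducing $s_{c_{\gamma_0}}$ on $\Lambda_d(e)$. The key point is that equinodality is a fiber-preserving notion: for $f\in\Diff(\pi_d)$ the arc $f(\gamma)$ is again equinodal with $c_{f(\gamma)}=f_*c_\gamma$. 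Hence the set of equinodal roots is invariant under the image $G\subseteq\Orth(\Lambda_d(e))^+$ of $\Mod(\pi_d)$ and contains the orbit $G\cdot c_{\gamma_0}$; if $G$ is transitive on $(-2)$-vectors then every root is equinodal, which finishes (i) --- equivalently, the equinodal Eichler transformations $E(e\wedge c_\gamma)$ then span the unipotent radical $\Lambda_d(e)$ --- and simultaneously completes (ii).

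The main obstacle is exactly this transitivity, i.e.\ realizing enough of $\Orth(\Lambda_d(e))^+$ by \emph{fiber-preserving} diffeomorphisms. I plan to get it from braid monodromy: dragging the $12d$ critical values around one another lifts by construction to fiber-preserving diffeomorphisms, and I would show the resulting monodromy group is already transitive on $(-2)$-vectors. For $d=1$ the braid monodromy of the rational elliptic pencil realizes the full Weyl group $W(\Eb_8)$, which is transitive on the $240$ roots; for $d\ge2$ I would propagate this across a fiber connected sum decomposition $M_d\cong M_1\#\cdots\#M_1$, the extra hyperbolic summands $2\Ub$ being produced by equinodal arcs crossing the connecting necks together with the transvection identity $E(f\wedge a)=s_{a+f}\,s_a$ of the first step, and then invoke Eichler transitivity to conclude $G\cdot c_{\gamma_0}$ is all of the $(-2)$-vectors. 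The delicate points I anticipate are the real-spinor-orientation bookkeeping needed to land in the index-two subgroup $\Orth(\Lambda_d(e))^+$ rather than merely in the spinor-norm kernel, and the verification that the fiber-preserving braid monodromy is transitive on roots (not just a proper subgroup) --- essentially the monodromy computation for generic elliptic surfaces.
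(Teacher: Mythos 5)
Your overall strategy coincides with the paper's in outline: equinodality is invariant under fiber-preserving diffeomorphisms, so it suffices to realize geometrically a subgroup of $\Orth(\Lambda_d(e))^+$ that acts transitively on $(-2)$-vectors. Moreover your lattice-theoretic skeleton (the factorization $E(u\wedge a)=s_{a+u}s_a$, generation of the spinor kernel by Eichler transvections, Eichler transitivity) is sound, and it is a genuine alternative to the paper's arithmetic input, which is Ebeling's theorem on complete vanishing lattices \cite{E}. The gap is in the geometric step that your Eichler machine needs as fuel and which you only assert: that there exist \emph{equinodal} roots involving the hyperbolic neck summands. To produce the transvections $E(u\wedge a)$ with $u$ isotropic in a neck copy of $\Ub$, you must already know that both $a$ and $a+u$ are equinodal; the classes $a+u$ mix an $\Eb_8(-1)$ summand with a neck summand and are exactly the classes you have no handle on. What the fiber-sum picture plus the $d=1$ case gives you directly is only that roots lying in the $d$ copies of $\Eb_8(-1)$ are equinodal; reflections in those generate just $W(\Eb_8)^d$, a finite group whose orbits never leave the union of the $\Eb_8$ summands, so neither transitivity nor generation follows from them. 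Your phrase ``the extra hyperbolic summands $2\Ub$ being produced by equinodal arcs crossing the connecting necks'' names what has to be proved --- one must exhibit neck-crossing equinodal arcs \emph{and compute their classes} in $\Lambda_d(e)$, a Lefschetz-thimble computation in the fiber sum --- but no argument is given, and this is precisely the hard content of the theorem.

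For comparison: the paper circumvents that computation by an induction on $d$ which never identifies neck-crossing classes at the top level. It embeds two \emph{overlapping} genus-$(d-1)$ sub-fibrations over disks $D_\pm$, so that by induction the union $\Delta_+\cup\Delta_-$ of their root systems consists of equinodal roots; since the overlap is of type $H_{d-2}(e)$ and $H_{d-1}(e)_+$ contains $\Ab_2(-1)\perp 2\Ub$, the pair $(\Lambda_d(e),\Delta_+\cup\Delta_-)$ is a complete vanishing lattice, and Ebeling's theorem yields $\G(\Delta_+\cup\Delta_-)=\Orth(\Lambda_d(e))^+$ outright; transitivity plus invariance then makes every root equinodal. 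Note also that this induction cannot start at $d=2$ (there $H_{1}(e)_++H_{1}(e)_-\ne\Lambda_2(e)$), and the paper imports that base case from \cite{FL2}, where it rests on the Torelli theorem for K3 surfaces. Your uniform treatment of all $d\ge 2$ would, if the neck-crossing step were actually carried out, be a genuine improvement precisely because it bypasses Torelli; as written, however, both that step and the $d=2$ case are unproven, and the $d=1$ input (that braid monodromy of the rational elliptic fibration realizes all of $W(\Eb_8)$) is asserted without proof or citation.
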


\begin{proof}
This theorem is known for $d=1$. In that case $\MW(\pi_1)$ is its holomorphic counterpart and isomorphic to $\Eb_8(-1)$ and $\Orth(\Lambda_1(e))^+$ is isomorphic to the Weyl group of the root system of $(-2)$-vectors in $\Eb_8(-1)$.   It is also known for $d=2$: we established this  in \cite{FL2} with  the help of the Torelli theorem for K3 surfaces. 

We now proceed with induction and assume $d>2$.  For any set $\Delta\subset \Lambda_d(e)$ of $(-2)$-vectors we denote  by $\G(\Delta)$ the subgroup of $\Orth(\Lambda_d(e))^+$ generated by the reflections in $\Delta$. Write $\Delta_d\subset \Lambda_d(e)$ for the set of equinodal $(-2)$-vectors.

We make use of the  fiber connected sum decomposition of $M_d$ obtained  via Theorem \ref{thm:fiberconnectedsum}. This  gives  a  decomposition of $\Lambda_d(e)$ 
\[
\Lambda_d(e)\cong \Eb_8(-1)\perp 2\Ub\perp \Eb_8(-1)\perp\cdots  \perp 2\Ub\perp \Eb_8(-1),
\]
where we have $d$ copies of $\Eb_8(-1)$ and  $d-1$ copies of $2\Ub$. This ordering of the summands should be understood as follows: if   $d>1$, then the copy  $H_{d-1}(e)_+$  resp.\  $H_{d-1}(e)_-$ of  $H_{d-1}(e)$ that supplements  the last three summands $2\Ub\perp \Eb_8(-1)$ resp.\ the first three summands $ \Eb_8(-1)\perp 2\Ub$ comes from the preimages over disks 
$D_\pm\subset \PP^1$ over whose boundary we have a trivial torus bundle. If we contract $\p D_\pm$ in $D_\pm$ and use a trivialization to lift that to a contraction of  $\pi_d^{-1}\p D_\pm$ in  $\pi_d^{-1}D_\pm$  to a torus, we get a fibration diffeomorphic  with  $\pi_{d-1}$.

The induction hypothesis then implies  that $\Delta_{\pm}:=\Delta_d\cap H_{d-1}(e)_\pm$ consists of  all the $(-2)$-vectors in
$H_{d-1}(e)_\pm$ and generates $H_{d-1}(e)_\pm$  and that  
$\G(\Delta_d)\supset \Orth(H_{d-1}(e)_+)^+\cup \Orth(H_{d-1}(e)_-)^+$. 
Note that $H_{d-1}(e)_+\cap H_{d-1}(e)_-$ is of type $H_{d-2}(e)$ and hence is generated by $(-2)$-vectors. This implies that  
$\Delta_d$ generates all of $\Lambda_d(e)$. It also shows that $\Delta_+\cup \Delta_-$ lies in a $\G(\Delta_+\cup \Delta_-)$-orbit. 
As $H_{d-1}(e)_+$ contains $\Eb_8(-1)\perp 2\Ub$ and hence a copy of $\Ab_2(-1)\perp 2\Ub$, it follows that the pair 
$(\Lambda_d(e), \Delta_+\cup \Delta_-)$ is a \emph{complete vanishing lattice} in the sense of Ebeling \cite{E}. His main theorem 2.3 (\emph{op.\ cit.}) states  that then $\G(\Delta_+\cup \Delta_-)=\Orth(\Lambda_d)^+$. So \emph{a fortiori},   $\G(\Delta_d)=\Orth(\Lambda_d)^+$.
It is well-known that the $(-2)$-vectors in a lattice of type $\Lambda_d$ make up a single $\Orth(\Lambda_d)^+$-orbit; this  also  follows from Ebelings' s proposition  (2.5). 
\end{proof}

\begin{corollary}\label{cor:repisonto}
For each $d\geq 1$, the representation of  $\Mod(\pi_d)$ on $\Lambda_d$ has image $\G_{d,e}$. 
\end{corollary}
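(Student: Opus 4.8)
The plan is to place the image of $\Mod(\pi_d)$ inside the short exact sequence
\[
0\to \Lambda_d(e)\to \Gamma_{d,e}\to \Gamma_d(e)\to 1
\]
of \S\ref{subsect:Thurstontype} and to read off the two generation statements of Theorem \ref{thm:equinodal} as surjectivity onto the kernel and onto the quotient, respectively. Write $G\subseteq\Orth(\Lambda_d)$ for the image of the composite $\Mod(\pi_d)\to\Mod(M_d)\xrightarrow{\rho_d}\Orth(\Lambda_d)$; the goal is to prove $G=\Gamma_{d,e}$.

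First I would verify the containment $G\subseteq\Gamma_{d,e}$. Every $f\in\Diff(\pi_d)$ preserves the orientation of $M_d$ (as noted in the introduction, every self-diffeomorphism does) and carries fibers to fibers, hence fixes the fiber class $e$; so $f_*$ lies in the stabilizer of $e$ in $\Orth(\Lambda_d)$, which is by definition $\Gamma_{d,e}$. (Here fixing the primitive isotropic vector $e$, rather than merely $\pm e$, forces preservation of the spinor orientation on $\Lambda_d(e)$, so that the projection $\Gamma_{d,e}\to\Orth(\Lambda_d(e))$ has image the index-two subgroup $\Gamma_d(e)=\Orth(\Lambda_d(e))^+$, which is exactly the group appearing in Theorem \ref{thm:equinodal}(ii).)

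Next I would exhibit, for each equinodal arc $\gamma$, two elements of $G$ realizing the generators furnished by Theorem \ref{thm:equinodal}. The fiberwise translation $F_\gamma\in\Trans(\pi_d)\subset\Diff(\pi_d)$ induces the Eichler transformation $E(e\wedge c_\gamma)\in\Lambda_d(e)$, while the braid lift $\tau(C_\gamma)\in\Mod(\pi_d)$ of Corollary \ref{cor:2Dehn} and Corollary \ref{cor:MWpresentation} acts on $\Lambda_d$ by $x\mapsto x+(c_\gamma\cdot x)c_\gamma$; since $c_\gamma\cdot e=0$ this fixes $e$ and descends on $\Lambda_d(e)$ to the equinodal reflection in the image of $c_\gamma$. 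Both families therefore lie in $G$. By Theorem \ref{thm:equinodal}(i) the transformations $E(e\wedge c_\gamma)$ generate the entire kernel $\Lambda_d(e)$, whence $\Lambda_d(e)\subseteq G$; by Theorem \ref{thm:equinodal}(ii) the equinodal reflections generate $\Gamma_d(e)$, so the projection $\Gamma_{d,e}\to\Gamma_d(e)$ carries $G$ onto all of $\Gamma_d(e)$.

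Finally I would close with the elementary extension argument: a subgroup of $\Gamma_{d,e}$ that contains the kernel $\Lambda_d(e)$ of $\Gamma_{d,e}\to\Gamma_d(e)$ and surjects onto the quotient must equal $\Gamma_{d,e}$, giving $G=\Gamma_{d,e}$. The real substance is Theorem \ref{thm:equinodal}, which I am allowed to assume; there is no further genuine obstacle. The one point demanding care is the bookkeeping that both $F_\gamma$ and $\tau(C_\gamma)$ genuinely lie in the \emph{fiber-preserving} group $\Mod(\pi_d)$ and not merely in $\Mod(M_d)$ — precisely what the equinodal-arc construction of \S\ref{section:smallsup} guarantees — and noting that this holds uniformly for all $d\ge 1$, since Theorem \ref{thm:equinodal} already treats the base cases $d=1,2$ together with the inductive range $d>2$.
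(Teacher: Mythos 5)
Your argument is correct and is essentially the paper's own: the paper likewise obtains $\Gamma_{d,e}$ by combining surjectivity onto the unipotent radical $\Lambda_d(e)$ (realized by fiberwise translations) with surjectivity onto the Levi quotient $\Gamma_d(e)=\Orth(\Lambda_d(e))^+$ coming from the equinodal reflections of Theorem \ref{thm:equinodal}, the containment of the image in $\Gamma_{d,e}$ being left implicit there just as in your first step. The only difference is bookkeeping for the kernel: the paper invokes Proposition \ref{prop:MWcohomologyII} (the isomorphism $\MW(\pi_d)\cong\Lambda_d(e)$, with $\MW(\pi_d)$ in the image since $\Trans(\pi_d)\subset\Diff(\pi_d)$), whereas you use Theorem \ref{thm:equinodal}(i) together with the realization of each equinodal Eichler transformation by $F_\gamma\in\Trans(\pi_d)$ --- both routes are available in the paper and lead to the same conclusion.
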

\begin{proof}
Proposition \ref{prop:MWcohomologyII} asserts that the subgroup $\MW(\pi_d)$ maps (in fact, isomorphically) onto the unipotent radical ($\cong \Lambda_d(e)$ of $\G_{d,e}$ and  
Theorem \ref{thm:equinodal} tells us  $\Mod(\pi_d)$  maps onto $\Orth(\Lambda_d(e))^+$. Hence  $\MW(\pi_d)$ maps onto $\G_{d,e}$.
\end{proof}

\begin{remark}\label{rem:repnonfaithful}
The representation of $\Mod(\pi_d)$ on $\Lambda_d$ is not faithful:
in Corollary  \ref{cor:MWpresentation} we found a  $(-2)$-sphere $C$  defining  a relative mapping class $\tau(C)\in \Mod(\pi_d)$ of infinite order whose image in $\Mod (M_d)$ is the 2-dimensional Dehn twist $T(C)$, which  has  order $2$ (and which induces an orthogonal reflection in $\Lambda_d$).  This also shows that the forgetful  homomorphism $\Mod(\pi_d)\to \Mod (M_d)$ is not injective.  
\end{remark}

\begin{proof}[Proof of  Corollary \ref{corollary:small}]
\label{proof:Nielsen1}
We first show that the lattice $\Lambda_d(e)$ admits a basis $\Cscr$ of $(-2)$-vectors. Since $\Lambda_d(e)$ is the isomorphic to the orthogonal direct sum of $\Eb_8(-1)$ and $d-1$ copies of $\Eb_8(-1)\perp 2(-1)\Ub$, it suffices to prove this for these two lattices. This is clear for  
$\Eb_8(-1)$, for any root basis $\alpha_1, \dots, \alpha_8$ will do. If $e,f$ resp.\  $e',f'$ are standard (isotropic) bases for the first resp.\ second $\Ub$-summand, then $\{\alpha_1, \dots, \alpha_8, \alpha+e, \alpha+f, \alpha+e', \alpha+f'\}$ is basis of $\Eb_8(-1)\perp 2(-1)\Ub$ consisting of $(-2)$-vectors.

By Theorem \ref{thm:equinodal}  every $c\in \Cscr$ is an equinodal $(-2)$-vector, so associate  to some equinodal arc 
$\g_c$. By definition there exists a translation   $f_c\Trans(\pi_d)$ with  support contained in a regular neighborhood of $\g$ such that $F_\g$ induces the Eichler transformation $E(c\wedge e)$. Then the homomorphism $\Lambda_d(e)\to \Trans(\pi_d)$ of abelian groups that takes $c$ to  $f_c$ gives the desired  Nielsen realization.
\end{proof}

\begin{proof}[Proof of Theorem \ref{thm:smallsupport}]
Given $c$ as in the hypothesis of the theorem, Theorem  
\ref{thm:equinodal} produces an equinodal arc $\gamma_c\subset\Pb^1$ and its tubular neighborhood $U_c$, giving part (1).   Part (2a) of the theorem 
is part of the statement of Proposition \ref{prop:spheres and sections}.  Part (2b) is  Corollary \ref{cor:2Dehn}.
\end{proof}

\begin{remark}\label{rem:}  For $h=(h_M,h_{\PP^1}) \in \Diff(\pi_d)\subset \Diff(M_d)\times \Diff(\PP^1)$,  the  mapping torus of $h_{\PP^1}$  determines a spherical braid. Its  
$\Diff^+(\PP^1)$-conjugacy class only depends on the image of $h$ in $\Mod(\pi_d)$ and can be understood as an element of the orbifold fundamental group of $\Scal_{12d}\bs\Mcal_{12d}$ (the moduli space of $12d$-element subsets of $\PP^1$ given up to projective equivalence). We thus have defined a homomorphism
\begin{equation}
\label{eq:sbraid}
\Bscr: \Mod(\pi_d)\to \pi_1^\orb(\Scal_{12d}\bs\Mcal_{12d},[D]), 
\end{equation}
where $D$ is the discriminant of $\pi_d$.  The right-hand side of \eqref{eq:sbraid} is a quotient of the mapping class group of the pair $(\PP^1,D)$ by  $\pi_1(\SO_3)\cong\{\pm1 \}$). The kernel of $\Bscr$ is $\Mod(M_d/\PP^1)$, the connected component group of 
$\Diff(M_d/\PP^1)$, which contains $\MW(\pi_d)$ as a subgroup 
of index two (see Remark \ref{rem:index2}). 

The image of $\Bscr$ 
certainly contains the simple spherical braid  around a regular neighborhood boundary of an equinodal arc.
It also contains the third power of the spherical braids defined by the regular neighborhood boundary of what we might 
call an an \emph{anti-equinodal arc} $\g$: such an arc connects two points of the discriminant that define on a fiber over its 
interior vanishing cycles that span its first homology of that fiber as in Proposition  \ref{lemma:less interesting case} (if we let $\g$ shrink to a singleton, then its preimage becomes a Kodaira fiber of cuspidal type and hence the monodromy over a regular  neighborhood boundary has order $6$). In either case, the squares of these elements become trivial in $\Mod(M_d)$.
\end{remark}

\begin{question}\label{quest:}
Is the image of $\Bscr$ generated  by  the simple spherical braids associated to the equinodal arcs and the third  power  of the simple spherical braids associated to the anti-equinodal arcs? Is the subgroup generated by their squares equal to the kernel of the natural map $\Mod(\pi_d)\to \Mod (M_d)$? 
\end{question}

\end{document}